\numberwithin{equation}{section}
\newtheorem{thm}{Theorem}[section]
\newtheorem{lem}[thm]{Lemma}
\newtheorem{cor}[thm]{Corollary}
\newtheorem*{observ}{Observation}
\begin{document}
\title{\vspace{-10ex} Existence of equilibrium for Infinite System of Interacting Diffusions}
\author{Franti\v sek \v Z\' ak}
\date{Department of Mathematics, Imperial College London, f.zak12@imperial.ac.uk}
\maketitle
\begin{abstract}
\noindent We develop and implement new probabilistic strategy for proving basic results about long time behaviour for interacting diffusion processes on unbounded lattice. The concept of the solution used is rather weak as we construct the process as a solution to suitable infinite dimensional martingale problem. However the techniques allow us to consider cases where the generator of the particle is degenerate elliptic operator. As a model example we present situation, where the operator arises from Heisenberg group. In the last section we mention some further examples that can be handled using our methods. 
\end{abstract}
\begin{small}
{\bf Keywords:} 
Interacting particle systems, infinite dimensional stochastic process, ergodicity results\\
{\bf AMS subject classification :} 60K35, 37A25, 60J60
\end{small}

\section{Introduction} The study of interacting particle systems has a long and profound history, as is well evidenced by excellent monographs \cite{Liget} or \cite{Kipnis}. Initially motivated by the problems of statistical physics, the field has grown into an important area of Markov processes in itself with interesting problems and rich interplay with other subjects. \\
We investigate continuous spin systems with a diffusion particle on each site. Most results establishing ergodicity properties for interacting particle systems with unbounded state space are tied with the use of functional inequalities, see \cite{Gionet}. As for the diffusions, there has been two independent successful approaches to this problem in the 1990s, one by Zegarli\'{n}ski \cite{ZegarS} and other by Da Prato and Zabczyk \cite{Zabczyk}, each to their merit and deficiencies. The approach in \cite{ZegarS} constructs the desired semigroup using finite dimensional approximations and ergodicity results are established via log Sobolev inequality, while more probabilistic approach in \cite{Zabczyk} uses the theory of SDEs on Hilbert spaces for construction and ergodicity is tied with the dissipativity properties of underlying operators. \\ 
Both these works essentially cover only elliptic case. The question how to address some subelliptic situation has been resolved under suitable condition in \cite{ZegarN} again using analytic techniques based on functional inequalities (very recently the results were extended to cover even broader class of operators in \cite{Ottobre} and \cite{Ottobrex}). Because in such cases even ergodicity of the finite system is highly non-trivial, important part of the result lies in conquering this problem. \\
This article presents a new probabilistic approach to investigate these issues. The results obtained go in some way successfully beyond Hilbert space methods of \cite{Zabczyk}.  We can cover degenerate multiplicative noise as we show in the case of Heisenberg group (or Grushin plane). However we cannot prove the uniqueness of invariant measure, let alone convergence towards it. Notice however that such results usually require some assumptions about smallness of the interactions, they should be tied with the condition on weights of the space the system live in, see \cite{Zabczyk} for example. Therefore it appears even probable, that under assumptions we work, the uniqueness of invariant measure for the system may not hold. \\
The setting is the following; assume we have a space $(\mathbb{R}^n)^{\mathbb{Z}^d}$, the dynamics of the system can then be described by the operator of the form \begin{align} \label{obecnyopera} \sum_{i \in \mathbb{Z}^d} \mathcal{A}_i + q_i \mathcal{B}_i, \end{align} where each $\mathcal{A}_i$ is the second order operator acting on $\mathbb{R}^n$ and on i-th coordinate of the lattice $\mathbb{Z}^d$ and $\mathcal{B}_i$ first order operator acting on i-th coordinate. We assume that we have interactions $q_i$ only in drift term and they are of finite range. \\
We construct the infinite dimensional process using finite dimensional approximations by solving appropriate stochastic differential equations. Of course such approach is well known and nothing new in the field, see e.g. \cite{Stroockinterakce}, \cite{Fritz}. The main novelty of our approach in comparison with these mentioned lies in two facts - we use martingale problem as a concept of solution, which allows us to bypass strong boundedness of coefficients assumption in \cite{Stroockinterakce}, secondly we benefit from now well established Meyn-Tweedie \cite{Mejn2} theory to prove ergodicity results in finite dimension. \\
In section 2 we give a proof of these finite dimensional results. Using tightness arguments we construct the process corresponding to (\ref{obecnyopera}) as a solution to martingale problem. The key and most technical part is section 4, where we show under additional assumptions about interaction functions that the limit of our approximations is unique and consequently establish Markov property of our process. The existence of the invaraint measure for the constructed process is established in the end. \\
For clarity and brevity of exposition we illustrate our techniques with the specific example of the operators corresponding to Heisenberg group. However it should be noted, that many parts of our results are independent of the specific diffusions considered, so in the last section we also mention some other natural situation that can be dealt with our methods. 
\noindent 
\subsection{Statement of the results and strategy of the proof} 
\noindent Let $\mathbb{H} = \mathbb{R}^3 = (x, y, z)$ be the Heisenberg group (for the detailed treatment of Heisenberg group as an example of Stratified Lie group see \cite{Grupy}, for nice and brief account of the relation to the matrix Heisenberg group see \cite{Bakry}) and $X, Y$ the generators of Lie algebra on $\mathbb{H}$, i. e. $$X = \partial_x - \frac{1}{2}y \partial_z$$ $$Y = \partial_y + \frac{1}{2}x \partial_z.$$ We denote $D = x \partial_x + y \partial_y + 2z \partial_z$ (so that $[X, D] = X$, $[Y,D] = Y$) the so-called dilation operator. \\
Consider the $d$ dimensional lattice $(\mathbb{R}^3)^{\mathbb{Z}^d}$, i. e. spin system where we have a copy of Heisenberg group at every point. We study the existence and long time behaviour of diffusion associated with the operator \begin{align} \label{opera} L = \sum_{i \in \mathbb{Z}^d} \mathcal{L}_{\lambda_i} +  q_{x_i} X_i + q_{y_i} Y_i, \end{align} where $X_{\cdot_i}$ is the vector field acting on the i-th coordinate, $q_{\cdot_i}$ is the interaction function with finite range, i. e. function whose value depends on all neighbours within some fixed length $r$, $\mathcal{L}_{\lambda_i} = X_i^2 + Y_i^2 - \lambda_i D_i$ and $\lambda_i$ are positive constants. We can summarize the results obtained as follows.
\begin{thm}[Inifinite dimensional results] \label{HlavniVysledek} 
Let $\mathbb{Z}^d$ be $d$ dimensional lattice equipped with the max metric, i. e. $\|i\|_{\max} = \max_{1 \leq j \leq d} \ |i_j|$ for $i \in \mathbb{Z}^d$, $r > 0$ given constant and $\Pi_n = \lbrace i \in \mathbb{Z}^d : \|i\|_{\text{max}} \leq nr \rbrace$. Let $q_{\cdot_i}, i \in \mathbb{Z}^d$ be smooth functions depending on $(2r+1)^d$ variables. Let $L$ be the operator given by $$L = \sum_{i \in \mathbb{Z}^d} \mathcal{L}_{\lambda_i} + q_{x_i} X_i + q_{y_i} Y_i$$ subject to the assumptions  : \begin{itemize}
\item \textbf{(H1)} $\exists C > 0 : \ \sup_{u \in (\mathbb{R}^3)^{(2r + 1)^d}} |q_{\cdot_i}(u)| \leq C, \ i \in \mathbb{Z}^d$
\item \textbf{(H2)} $\sup_{u \in (\mathbb{R}^3)^{(2r + 1)^d}} \sum_{j = 1}^{(2r+1)^d} |\frac{\partial q_{\cdot_i}}{\partial_j}(u) u_{\cdot_i} | +  |\frac{\partial q_{\cdot_i}}{\partial_j}(u)| \leq C, \ i \in \mathbb{Z}^d$
\item \textbf{(H3)} $\inf_{i \in \mathbb{Z}^d} \lambda_i > 0, \sup_{i \in \mathbb{Z}^d} \lambda_i < \infty.$
\end{itemize} Introduce the weighted metric space $$ S = \lbrace a \in (\mathbb{R}^3)^{\mathbb{Z}^d} : \ \sum_{i \in \mathbb{Z}^d} \|a_i\|_\mathbb{H}^8 u(i) < +\infty \rbrace,$$ where $\|a_i\|_\mathbb{H} = ((a_{i,x} + a_{i,y})^2 + a_{i,z}^2)^{\frac{1}{4}}$ and the weights satisfy : \begin{itemize}
\item \textbf{(H4)}$\sum_{i \in \mathbb{Z}^d} u(i) < + \infty, \ u(i) > 0, i \in \mathbb{Z}^d$
\item \textbf{(H5)} $\exists v(i) > 0, i \in \mathbb{Z}^d, \sum_i v(i) < +\infty, \sum_i \frac{u(i)}{v(i)} < +\infty$
\item \textbf{(H6)} $\exists \delta \in (0, 1)$ $\exists K > 0$ s. t. $$u(j) \geq \frac{K}{i!^{1 - \delta}} \  \ j \in \Pi_i \setminus \Pi_{i-1}, \ i \in \mathbb{N}.$$ 
\end{itemize} We naturally set $\|a\|_S = \sqrt[8]{\sum_{i \in \mathbb{Z}^d} \|a_i\|_{\mathbb{H}}^8 u(i)}$. \\
Then for any $a \in S$ there exists probability measure $P^a$ on $\Omega = C([0, \infty), S)$ (space of continuous functions form $[0,\infty)$ with values in $S$) such that for the canonical process $A_t(\omega) = \omega_t$ we have $P^a(A_0 = a) = 1$ and the process $$f(A_t) - f(A_0) - \int_0^t Lf(A_u) du$$ is martingale for $f \in C^{2, Cyl}_c$ under the measure $P^a$, where $C^{2, Cyl}_c$ stands for cylindrical twice continuously differentiable functions with compact support. The pair $(A_t, P^a)$ is a Markov process and there exist an invariant measure $\nu$ for the semigroup $P_t f(a) = E^a f(A(t)), a \in S$.
\end{thm}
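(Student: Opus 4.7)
The strategy is to follow the roadmap of the introduction: build finite-volume approximations, prove tightness in the weighted state space $S$, identify any limit point via the martingale problem, apply the Section~4 uniqueness argument to get the Markov property, and finally deduce existence of an invariant measure through Krylov--Bogoliubov. For each $n$ I freeze the coordinates outside $\Pi_n$ at the initial value $a$ and consider on $(\mathbb{R}^3)^{\Pi_n}$ the SDE whose generator is $L^n = \sum_{i \in \Pi_n} (\mathcal{L}_{\lambda_i} + q_{x_i} X_i + q_{y_i} Y_i)$; the finite dimensional theory of Section~2 produces a strong Markov solution $A^n$ starting from $a|_{\Pi_n}$, which I extend to $S$ by keeping the remaining coordinates fixed at $a_i$.

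The next step is to establish uniform moment estimates in the weighted norm of $S$. Applying It\^o's formula site-by-site to $\|A^n_{i,t}\|_\mathbb{H}^8$, the dissipativity coming from $-\lambda_i D_i$ together with (H3) dominates the quadratic variation of $X_i^2 + Y_i^2$ and the first order contribution of $q_{x_i} X_i + q_{y_i} Y_i$ which is bounded by (H1)--(H2); this gives
\begin{equation*}
E\|A^n_{i,t}\|_\mathbb{H}^8 \leq e^{-ct}\|a_i\|_\mathbb{H}^8 + C
\end{equation*}
with constants $c, C$ independent of $i, n, t$. Weighting by $u(i)$ and summing using (H4) yields $\sup_n \sup_{t \geq 0} E \|A^n_t\|_S^8 < +\infty$. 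Combining this with a coordinate-wise Aldous modulus-of-continuity estimate and exploiting (H5)--(H6) to cut the infinite tail of $S$ uniformly in $n$, I would conclude tightness of the laws $\{P^{a,n}\}$ on $C([0,T], S)$ for every $T$, hence on $C([0,\infty), S)$ by a projective limit construction.

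The third step identifies a weak subsequential limit $P^a$. Any $f \in C^{2, Cyl}_c$ depends only on finitely many coordinates $\Lambda$, and once $\Pi_n$ contains the $r$-enlargement of $\Lambda$ one has $L^n f = L f$, so $f(A^n_t) - f(A^n_0) - \int_0^t Lf(A^n_s)\, ds$ is a $P^{a,n}$-martingale. The uniform moments from the previous step furnish the uniform integrability needed to pass to the limit, thereby showing that $P^a$ solves the infinite dimensional martingale problem associated with $L$. The uniqueness result of Section~4, valid under (H5)--(H6), guarantees that the full sequence converges, that the limit does not depend on the approximation scheme, and that $(A_t, P^a)$ enjoys the Markov property.

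Finally, because the moment bound is uniform in $t \geq 0$, the time-averaged laws $\mu^a_T(\cdot) = \frac{1}{T}\int_0^T P^a(A_t \in \cdot)\, dt$ form a tight family on $S$, and by Krylov--Bogoliubov any weak cluster point is an invariant measure $\nu$ for the semigroup $P_t$. I expect the main obstacle to be the uniqueness of the infinite dimensional martingale problem in step three: existence of \emph{some} weak limit and its identification as a martingale-problem solution follow from standard compactness and continuity arguments once the moment bound of step two is in hand, whereas showing that two different approximation schemes produce the same infinite dimensional process demands quantitative control over how local perturbations propagate along the lattice, which is precisely where the rapid-decay condition (H6) and the auxiliary weight in (H5) come into play.
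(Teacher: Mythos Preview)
Your plan for constructing $P^a$ and establishing the Markov property tracks the paper closely: finite-volume SDEs, moment estimates, tightness of the laws in $C([0,\infty),S)$, identification of any limit as a martingale-problem solution, and then the Section~4 iteration (under (H2) and (H6)) to show the limit is independent of the approximation and hence Markov. Two small deviations worth noting: the paper extends $A^n$ to $S$ by zeroing the outside coordinates rather than freezing them at $a_i$ (harmless), and it obtains path tightness from (H1), (H3), (H4) alone via a Kolmogorov-type increment estimate; (H6) enters only for uniqueness/Markov and (H5) only for the invariant measure, so you have their roles slightly mixed.

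The genuine difference is the last step. The paper does \emph{not} run Krylov--Bogoliubov on the infinite-dimensional process. Instead it takes the unique invariant measures $\mu_n$ of the finite-volume diffusions $A^n$, whose existence comes from the Meyn--Tweedie package of Section~2, pushes them forward to $\nu_n$ on $S$, proves $\{\nu_n\}$ tight, and checks that any weak limit is invariant for $P_t$. Your Krylov--Bogoliubov route is also viable, but be careful: a uniform-in-$t$ bound on $E\|A_t\|_S^8$ by itself does \emph{not} give tightness of the time averages, because bounded sets in $S$ are not precompact (the compactness criterion in $S$ requires uniform tail decay, cf.\ Lemma~3.3). What you actually need is the two-weight mechanism of (H5): the site-by-site Lyapunov bound $E\|A_{i,t}\|_\mathbb{H}^8 \leq C$ summed against the auxiliary weights $v(i)$ controls a stronger norm, and $\sum_i u(i)/v(i)<\infty$ then converts this into tail smallness in $S$. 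The paper carries out exactly this argument in Theorem~5.1, but applied to the sequence $\nu_n$ rather than to time averages of $P^a$; either object supports the computation, and the paper's choice has the advantage of directly recycling the finite-dimensional ergodicity already established in Section~2.
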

\noindent The Theorem consists of several non-trivial ingredients, namely the existence of solution to the martingale problem is proved in Theorem \ref{existence reseni mart problemu}, Markov property in Theorem \ref{Markovsky proces} and the existence of invariant measure for the model is proved in Chapter \ref{ergodickakapitola}. \\
To reach these results, we firstly proceed by investigating the case of diffusion on Heisenberg group. Concretely we analyse the asymptotic behaviour of the Markov process on $\mathbb{R}^3$ with generator $$\mathcal{L} = X^2 + Y^2 - \lambda D + q_x X + q_y Y.$$ Under suitable assumptions on $q_\cdot'$s the process can be constructed by ordinary It\={o} stochastic equation and using the theory of Meyn and Tweedie (\cite{Mejn1}, \cite{Mejn2}, \cite{Hairer}) we establish exponential convergence in the total variation norm to the invariant measure in section 2. This result can be immediately translated to the exponential ergodicity of diffusion on $(\mathbb{R}^3)^n$ with the generator $$\sum_{i = 1}^n \mathcal{L}_{\lambda_i} + q_{x_i} X_i + q_{y_i} Y_i.$$ We prove in explicit the following result.\\
\begin{thm}[Finite Dimensional results] \label{konecnedimvysledky}
Let $(\mathbb{R}^3)^n$, $n \in \mathbb{N}$ be the state space and consider the operator \begin{align} \label{multidimoperator}
L_n = \sum_{i = 1}^n \mathcal{L}_{\lambda_i} + q_{x_i} X_i + q_{y_i} Y_i,
\end{align} under the corresponding assumptions \textbf{(H1), (H3)}. If we denote $A^n$ the diffusion corresponding to the operator (\ref{multidimoperator}), i. e. the unique solution to the It\={o} SDE with coefficients \begin{align*} 
b = & (q_{x_1} - \lambda_1 x_1, q_{y_1} - \lambda_1 y_1, - 2 \lambda_1 z_1 + \frac{1}{2} (q_{y_1} x_1 - q_{x_1} y_1), \ldots  \\
& \ldots, q_{x_n} - \lambda_n x_n, q_{y_n} - \lambda_n y_n, - 2 \lambda_n z_n + \frac{1}{2} (q_{y_n} x_n - q_{x_n} y_n) )
\end{align*}
\begin{align*}
\sigma = \begin{pmatrix}
M_1 & 0 & \cdots & 0 \\
0 & M_2 & \cdots & 0 \\
\vdots & 0 & \ddots & \vdots \\
0 & \cdots & 0 & M_n
\end{pmatrix}, \ \text{where} \ M_i = \begin{pmatrix} \sqrt{2} & 0 \\ 0 & \sqrt{2} \\ \frac{-y_i}{\sqrt{2}} & \frac{x_i}{\sqrt{2}}
\end{pmatrix},
\end{align*} then there exists unique invariant measure $\mu_n$ for the process $A^n$. For the function $W^k_n = 1 + \sum_{i = 1}^n v(i)((x_i^2 + y_i^2)^2 + z_i^2)^k$, $k \in \mathbb{N}$, where $v(i) > 0, \sum_i v(i) < \infty$, there exist constants $c_k > 0$ and $C_k > 0$ such that \begin{align} \label{obecnyLjapunov}
L_n W^k_n (a_n) \leq -c_k W^k_n(a_n) + C_k \ \forall a_n \in (\mathbb{R}^3)^n. 
\end{align} In addition there exist constant $K^k_n, \alpha^k_n > 0$, such that the following ($\mathcal{B}_b$ stands for bounded Borel functions)\begin{align} \label{kondimexponencialniergodicita} 
\sup_{\lbrace f \in \mathcal{B}(\mathbb{R}^{3n})  \ : \ \|f\|_\infty \leq 1 \rbrace} |E^a f(A^n(t)) - \mu_n(f) | \leq K^k_n W^k_n(a) e^{-\alpha^k_n t}
\end{align} holds for any $a \in (\mathbb{R}^3)^n$.
\end{thm}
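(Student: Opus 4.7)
My plan is to treat the three assertions separately, but with the Lyapunov estimate \eqref{obecnyLjapunov} playing the central role since it drives both the existence of $\mu_n$ and the exponential convergence \eqref{kondimexponencialniergodicita} via the Meyn--Tweedie machinery.

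First I would handle existence and uniqueness of the process $A^n$. By \textbf{(H1)} and \textbf{(H2)} the functions $q_{\cdot_i}$ are bounded and smooth, while the drift $b$ is a sum of a bounded term and a linear term $-\lambda_i x_i,\,-\lambda_i y_i,\,-2\lambda_i z_i$, and $\sigma$ is linear in $x_i,y_i$. Hence both $b$ and $\sigma$ are locally Lipschitz with linear growth, so a global strong solution exists and is unique by the standard Itô truncation argument; the linear growth of $\sigma$ is harmless once we have a Lyapunov function controlling the polynomial moments of the norm.

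Next I would establish the Lyapunov inequality \eqref{obecnyLjapunov}, which is the computational heart of the theorem. The decisive observation is the homogeneity of the Heisenberg gauge with respect to the dilation $D$: a direct computation gives
\begin{equation*}
D_i\bigl((x_i^2+y_i^2)^2+z_i^2\bigr)^k = 4k\bigl((x_i^2+y_i^2)^2+z_i^2\bigr)^k,
\end{equation*}
so that $-\lambda_i D_i$ applied to the $i$-th summand of $W^k_n$ produces the crucial dissipative term $-4k\lambda_i v(i)\bigl((x_i^2+y_i^2)^2+z_i^2\bigr)^k$, uniformly bounded from above by $-4k(\inf_i\lambda_i)$ times the $i$-th summand thanks to \textbf{(H3)}. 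The subelliptic part $X_i^2+Y_i^2$ applied to the same summand is polynomial of strictly lower homogeneity (degree $4k-2$ in the gauge), and the interaction terms $q_{x_i}X_i+q_{y_i}Y_i$ produce at most $C\,\|a_i\|_\mathbb{H}^{4k-1}$ contributions by \textbf{(H1)}. Using Young's inequality $\varepsilon x^{4k}+C_\varepsilon$ to absorb all sub-leading terms into a small fraction of the leading $-\lambda_i$-term, one obtains $c_k,C_k>0$ independent of $n$ (the summability of $v(i)$ is what keeps $C_k$ finite). This uniformity in $n$ will be essential for the infinite-dimensional construction in later sections.

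Finally I would derive the ergodic statement. The Lyapunov function $W^k_n$ is norm-like and has compact sublevel sets in $(\mathbb{R}^3)^n$, so by \eqref{obecnyLjapunov} the process is non-explosive and returns exponentially fast to the set $\{W^k_n\le R\}$ for $R$ large. To apply the Harris-type theorem of Meyn--Tweedie / Hairer--Mattingly I also need a minorization on compact sets: here the subellipticity is what we must exploit. Since $X$ and $Y$ together with $[X,Y]=\partial_z$ span the full tangent space, Hörmander's theorem yields a smooth strictly positive transition density on each factor, and Stroock--Varadhan support theorem yields topological irreducibility; combining these $n$ independent observations gives a minorization on any compact set of $(\mathbb{R}^3)^n$. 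The conjunction of the geometric drift condition and the minorization then yields uniqueness of $\mu_n$ and the exponential convergence \eqref{kondimexponencialniergodicita} with some $K^k_n,\alpha^k_n>0$; these constants are allowed to depend on $n$ at this stage since only \eqref{obecnyLjapunov} with $n$-independent $c_k,C_k$ is needed for the infinite-dimensional passage to the limit. The main obstacle I anticipate is the bookkeeping in the Lyapunov computation: one must carefully track the interplay between the degree-$4k$ leading term, the degree-$(4k-2)$ contributions from $X_i^2+Y_i^2$, and the degree-$(4k-1)$ interaction contributions, and absorb all of them using the dilation-generated negative term while keeping $c_k,C_k$ independent of $n$.
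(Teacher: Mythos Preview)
Your overall strategy matches the paper's exactly: verify the hypotheses of the Harris--Meyn--Tweedie theorem via H\"ormander's theorem for smooth densities, the Stroock--Varadhan support theorem for irreducibility, and a direct Lyapunov computation, then sum over sites using $\sum_i v(i)<\infty$ to get $c_k,C_k$ independent of $n$. Your use of the dilation homogeneity $D_iV_i^k=4kV_i^k$ is in fact cleaner than the paper's brute-force expansion in Lemma~\ref{VjeLjapunov}, though it leads to the same conclusion.

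Two small points deserve tightening. First, you invoke \textbf{(H2)}, but the statement only assumes \textbf{(H1)} and \textbf{(H3)}; smoothness plus boundedness of the $q$'s already gives the local Lipschitz and linear-growth conditions you need, so simply drop the reference to \textbf{(H2)}. Second, and more substantively, the phrase ``combining these $n$ independent observations'' is misleading: the $n$ copies are \emph{coupled} through the interactions $q_{\cdot_i}$, so you cannot argue factor by factor. The paper handles this by applying H\"ormander's theorem directly on $(\mathbb{R}^3)^n$ (the block-diagonal $\sigma$ still satisfies the bracket condition since $[X_i,Y_i]=\partial_{z_i}$ for each $i$), and for irreducibility it first uses Girsanov to strip off the bounded $q$-drifts, reducing the control problem to the genuinely decoupled system $\tilde b=(-\lambda_i x_i,-\lambda_i y_i,-2\lambda_i z_i)_i$. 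You should make this decoupling step explicit rather than asserting independence.
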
 
\noindent Next we consider an exhausting sequence $\Lambda_n \subset \subset \mathbb{Z}^d, \Lambda_n \nearrow \mathbb{Z}^d$ of the lattice, on $(\mathbb{R}^3)^{\Lambda_n}$ we construct diffusion $A^n$ that its generator extends the operator $$L_n = \sum_{i \in \Lambda_n} \mathcal{L}_{\lambda_i} + q_{x_i}^n X_i + q_{y_i}^n Y_i.$$ Unfortunately unlike in $\cite{Stroockinterakce}$ we are in a situation with unbounded coefficients, so we are unable to prove a limit of approximations in strong sense. Nevertheless we show tightness in appropriate weighted space $S$, $S \subset (\mathbb{R}^3)^{\mathbb{Z}^d}$, i. e. we are able to prove that the distributions of the processes $\tilde{A}^n = (A^n, 0_{i \in \mathbb{Z}^ \setminus \Lambda_n})$ form a tight sequence in $\Omega = C([0, \infty), S)$. From tightness follows the construction of family of measures $P^a, a \in S$ such that canonical process on $\Omega$ solves the martingale problem for (\ref{opera}). Our results are not completely satisfying since we do not prove the uniqueness of martingale problem for the operator (\ref{opera}). \\
Nevertheless under additional assumptions we can prove that our approximation procedure yields a unique measure. This is used to show that canonical process is a proper Markov process under constructed measure.  Furthermore exploiting the results obtained for bounded lattice we prove the existence of invariant measure for the unbounded lattice.\\
In certain aspects therefore - such as requiring no further assumptions on $\lambda$ in relevant examples - our results compare favourably to the ones in \cite{ZegarN}, \cite{Ottobre}. However it should be noted that our methods are only able to handle bounded interactions $q_\cdot'$s and we also work with much simpler generators than the authors in the above mentioned articles. One could also argue that our proofs are simpler, although that perhaps depends more on the background of the reader. 
\section{Finite dimensional case} 
\noindent We start by analyzing the diffusion on $\mathbb{R}^3$ associated with the second order operator  \begin{align} \label{1Doperator} \mathcal{L} = X^2 + Y^2 - \lambda D + q_x X + q_y Y. \end{align} We will work under the following assumptions \textbf{(B1)} : \begin{itemize}
\item $q_x, q_y \in C^\infty(\mathbb{R}^3, \mathbb{R}), \lambda > 0 $
\item $\exists \ C > 0 \ : \ ||q_x||_\infty \vee ||q_y||_\infty \leq C$
\end{itemize}
\noindent Under these assumptions we can construct the diffusion as a solution to the SDE $$dA(t) = b(A_t) dt + \sigma(A_t) dW_t.$$ Elementary computations with vector fields and matrices reveal that the coefficients can be chosen as 
\begin{equation}
\begin{aligned} \label{model}
b = (q_x - \lambda x, q_y - \lambda y, - 2 \lambda z + \frac{1}{2} (q_y x - q_x y) ) \\ 
\sigma = \begin{pmatrix} 
\sqrt{2} & 0 \\ 0 & \sqrt{2} \\ \frac{-y}{\sqrt{2}} & \frac{x}{\sqrt{2}}
\end{pmatrix}  \longrightarrow \frac{1}{2}a = \frac{1}{2} \sigma \sigma^* = \begin{pmatrix}
1 & 0 & \frac{-y}{2} \\
0 & 1 & \frac{x}{2} \\
\frac{-y}{2} & \frac{x}{2} & \frac{1}{4} (x^2 + y^2)
\end{pmatrix}. \end{aligned}
\end{equation}
The results of Meyn and Tweedie about exponential convergence of Markov processes can be stated in our diffusion context in the following way (for the  precise reference see \cite[Theorem 2.5]{Matingly} or very readable lecture notes by Rey-Bellet \cite{Bellet})
\begin{thm}[Harris - Meyn - Tweedie] \label{MejnTwidy} Let $X_t$ be a Markov process on $\mathbb{R}^n$ with transition probability $P_t$ and generator $L$. Suppose that following hypotheses are satisfied \begin{itemize}
\item[(1)] The Markov process is irreducible aperiodic, i. e. there exists $t_0$ (and then for all $t > t_0$) such that $$P_{t_0} (x, A) > 0, $$ for all $x \in \mathbb{R}^n$ and open sets $A$. 
\item[(2)] For any $t > 0$ the Markov semigroup $P_t(x, dy)$ has a density $p_t(x,y) dy$ which is a continuous function of $(x,y)$. 
\end{itemize}
Assume there exists Lyapunov function $$V : \mathbb{R}^n -> [1, \infty), \ V(x) \xrightarrow{\|x\| \rightarrow +\infty} + \infty$$ and constants $C, c > 0$ such that \begin{align} \label{Ljapunov} LV + cV \leq C. \end{align} Then there exists unique invariant measure $\mu$ for the process $X_t$ and there exist constants $K, \alpha > 0$ such that ($P_t f(a) = E^a f(X_t)$) $$\sup_{\lbrace f : |f(x)| \leq V(x) \rbrace} |E^a f(X_t) - \mu(f) | \leq K V(a) e^{-\alpha t}$$ for any $a \in \mathbb{R}^n.$
\end{thm}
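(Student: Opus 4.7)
The plan is to follow the classical Harris scheme: the Lyapunov condition yields a return/drift estimate to sublevel sets, while hypotheses (1) and (2) upgrade those sublevel sets to \emph{small sets} on which a uniform minorization of the transition kernel holds, and then a Harris-type coupling gives geometric ergodicity in the $V$-weighted total variation norm.

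The first step is to turn the infinitesimal inequality $LV + cV \leq C$ into a semigroup drift condition. Applying Dynkin's formula to $e^{cs} V(X_s)$ (after a standard localisation using $V \geq 1$ and a stopping time argument, so that the formula is justified despite $V$ being unbounded) one obtains
\begin{align*}
P_t V(x) \leq e^{-ct} V(x) + \frac{C}{c}\bigl(1 - e^{-ct}\bigr), \qquad x \in \mathbb{R}^n,\ t \geq 0.
\end{align*}
Since $V$ is coercive, the sublevel sets $K_R = \lbrace V \leq R \rbrace$ are compact, and this estimate shows that for any $R > C/c$ the return time to $K_R$ has exponential moments.

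The second step is to verify that each $K_R$ is a small set for some skeleton chain $P_{t_0}$. By hypothesis (1), $P_{t_0}(x, U) > 0$ for any non-empty open $U$ and $x \in \mathbb{R}^n$; by hypothesis (2), the density $p_{t_0}(x,y)$ exists and is jointly continuous. Fix any point $y_0$ and an open ball $B$ around it; for each $x \in K_R$ we have $p_{t_0}(x, y_0) > 0$, and a compactness argument (cover $K_R$ by neighbourhoods on which $p_{t_0}$ is uniformly bounded below on $B$ by joint continuity, extract a finite subcover) yields a constant $\alpha > 0$ and a probability measure $\nu$ supported in $B$ with
\begin{align*}
P_{t_0}(x, \cdot) \geq \alpha\, \nu(\cdot), \qquad x \in K_R.
\end{align*}

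The final step combines drift and minorisation via Harris' theorem: on the $V$-weighted total variation seminorm $\|\mu\|_V = \int (1 + \beta V)\, d|\mu|$, for appropriately small $\beta > 0$ and large enough $R$, one shows that the skeleton $P_{t_0}$ is a strict contraction,
\begin{align*}
\|P_{t_0}\mu - P_{t_0}\tilde\mu\|_V \leq \gamma\, \|\mu - \tilde\mu\|_V, \qquad \gamma < 1,
\end{align*}
by splitting according to whether both points lie in $K_R$ (where the minorisation provides a coupling with success probability $\alpha$) or at least one lies outside (where the drift contracts $V$). Iterating gives existence and uniqueness of an invariant probability $\mu$ and the exponential bound with $K, \alpha > 0$. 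The main technical obstacle is calibrating the constants $R$, $\beta$, $t_0$ and $\gamma$ simultaneously so that the drift gain outside $K_R$ and the minorisation gain inside $K_R$ produce a single contraction constant; the coercivity of $V$ and the freedom to iterate $P_{t_0}$ make this a bookkeeping exercise rather than a conceptual one, and it is essentially the content of the references \cite{Matingly, Bellet} cited in the statement.
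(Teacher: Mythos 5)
The paper does not prove this theorem at all: it is quoted as a known result, with the proof delegated to \cite[Theorem 2.5]{Matingly} and to Rey-Bellet's notes \cite{Bellet}. Your outline is precisely the standard Harris argument that those references carry out (Dynkin's formula turning $LV + cV \leq C$ into $P_tV \leq e^{-ct}V + \tfrac{C}{c}(1-e^{-ct})$, smallness of the compact sublevel sets of $V$, and a weighted-total-variation contraction for a skeleton chain), so there is nothing to compare against in the paper itself; your sketch is the right one. One step is stated slightly too strongly: from hypotheses (1) and (2) alone you cannot conclude that $p_{t_0}(x, y_0) > 0$ for a \emph{fixed} $y_0$ and \emph{all} $x \in K_R$ --- irreducibility only gives that $\lbrace y : p_{t_0}(x,y) > 0 \rbrace$ is dense for each $x$, and that set may vary with $x$. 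The standard repair, used in the cited references, is to split the time via Chapman--Kolmogorov: pick $(x_0, y_0)$ with $p_{t_0/2}(x_0,y_0) > 0$, use joint continuity to get $p_{t_0/2} \geq \delta$ on a product of balls $B(x_0,r) \times B(y_0,r)$, and then bound $p_{t_0}(x,y) \geq \delta \, P_{t_0/2}(x, B(x_0,r))$ for $y \in B(y_0,r)$; lower semicontinuity of $x \mapsto P_{t_0/2}(x, B(x_0,r))$ together with irreducibility and compactness of $K_R$ gives the uniform minorisation. With that adjustment your argument is the standard and correct one.
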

\noindent Every verification of the stated result is non-trivial and depends on deep results about diffusions in $\mathbb{R}^n$. In the remainder of the section we show that the process $A$ given by SDE with the coefficients $(\ref{model})$ indeed satisfies the condition of the above theorem. The existence and smoothness of transition probability density is the immediate consequence of the H\"{o}rmander theorem in probabilistic settings. The version that is sufficient for our purposes was first established following H\"{o}rmander work in \cite{Japonci}.
\begin{thm}[H\"{o}rmander probabilistic setting, Ichihara - Kunita] \label{HormavPrav}
Assume $X_t$ is the unique strong solution to the Stratonovich SDE $$dX_t = b(X_t) dt + \sum_{i = 1}^d \sigma(X_t) \circ dW_t, $$ where $b, \sigma_i, 1 \leq i \leq d \ \in C^\infty(\mathbb{R}^n, \mathbb{R})$. Suppose that following (H\"{o}rmander) condition is satisfied $$\ \text{dim} \ (\text{Lie} \lbrace \sigma_1, \ldots, \sigma_d \rbrace) = n \ \forall x \in \mathbb{R}^n.$$ Then there exists probability density function $P_t(x, dy) = p_t(x, y) dy$ such that $p_t(x,y) \in C^\infty ( (0, \infty), \mathbb{R}^n, \mathbb{R}^n)$.
\end{thm}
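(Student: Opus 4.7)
\noindent This is a classical hypoellipticity result, which the paper invokes as a black box; the route I would follow to prove it is the probabilistic/stochastic calculus of variations approach pioneered by Malliavin and carried out in the form relevant here by Ichihara and Kunita.

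The plan has three main steps. \emph{Step 1.} Pass from the Stratonovich SDE to a smooth stochastic flow $\phi_t(\cdot)$ and introduce the Jacobian $J_t = \partial_x \phi_t$, which satisfies a linear SDE obtained by formal differentiation; under the $C^\infty$ assumption on the coefficients, together with standard localisation to handle possible non-globally Lipschitz growth, both $J_t$ and its inverse $J_t^{-1}$ are smooth in the spatial variable and have finite moments of every order on compact time intervals.

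\emph{Step 2.} Introduce the reduced Malliavin covariance matrix
\begin{equation*}
C_t \;=\; \int_0^t J_s^{-1} \sigma(X_s) \sigma(X_s)^{*} (J_s^{-1})^{*} \, ds,
\end{equation*}
so that the Malliavin covariance of $X_t$ equals $J_t C_t J_t^{*}$. The key (and hardest) step is to show that under H\"{o}rmander's bracket condition $\det C_t > 0$ almost surely and $\det(C_t)^{-1} \in \bigcap_{p \geq 1} L^p$. This is established by an induction on the order of Lie brackets: if $\xi^{*} C_t \xi$ is small on some positive-probability event for a unit vector $\xi$, then the It\^{o} expansion of $s \mapsto \xi^{*} J_s^{-1} \sigma_i(X_s)$ combined with a quantitative Norris-type lemma forces the inner products of $\xi$ with an ever-growing family of vector fields (obtained by iterated bracketing of $\sigma_1,\ldots,\sigma_d$ and $b$) to be small as well, contradicting the full-rank assumption on $\mathrm{Lie}(\sigma_1,\ldots,\sigma_d)$ at the starting point once brackets up to sufficient order are exhausted.

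\emph{Step 3.} Combine Steps 1--2 with the Malliavin integration-by-parts formula: for every multi-index $\alpha$ and every $f \in C_c^\infty$,
\begin{equation*}
E\bigl[(\partial^\alpha f)(X_t^x)\bigr] \;=\; E\bigl[f(X_t^x) H_\alpha(t,x)\bigr],
\end{equation*}
with weight $H_\alpha \in \bigcap_p L^p$ built from the inverse Malliavin matrix and iterated Skorokhod divergences. Fourier/duality arguments together with joint regularity estimates in $(t,x)$ then deliver smoothness of the density $(t,x,y) \mapsto p_t(x,y)$ on $(0,\infty) \times \mathbb{R}^n \times \mathbb{R}^n$.

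The main obstacle is clearly Step 2, i.e. converting H\"{o}rmander's algebraic condition into quantitative moment bounds on $\det(C_t)^{-1}$; the Norris lemma and the Lie-bracket induction are the genuinely delicate ingredients, while Steps 1 and 3 are by now routine. Since the theorem is entirely standard, in the present paper one simply invokes the Ichihara--Kunita version from \cite{Japonci} without reproducing the argument.
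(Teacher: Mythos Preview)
Your reading is correct: the paper does not prove this theorem at all --- it is quoted as a black-box result from Ichihara--Kunita \cite{Japonci}, and you explicitly note this at both the beginning and end of your proposal. So there is nothing to compare against on the paper's side.

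One small historical correction: you write that the Malliavin/stochastic-calculus-of-variations route was ``carried out in the form relevant here by Ichihara and Kunita,'' but their paper is from 1974 and predates Malliavin's work. Ichihara--Kunita proceed analytically, essentially adapting H\"{o}rmander's original PDE argument to the probabilistic setting (showing hypoellipticity of $\partial_t - L$ and deducing smoothness of the transition density). The three-step Malliavin programme you sketch --- stochastic flow and Jacobian, invertibility and $L^p$ bounds for the Malliavin covariance via Norris' lemma and Lie-bracket induction, then integration by parts --- is the standard \emph{modern} probabilistic proof (as in Nualart \cite{Nualart}, which the paper also lists), and is perfectly valid, but it is not what \cite{Japonci} actually does. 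This does not affect the correctness of your outline as a proof of the stated theorem; it is only an attribution issue.
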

\noindent In our case (\ref{model}) the drift in the Stratonovich form is actually the same as in It\={o} form. In any case the Lie algebra generated by the diffusion satisfies the H\"{o}rmander condition as elementary computation reveals that $[X,Y] = \partial_z$ and consequently $$\text{dim} \left(\text{Lie} \left\{ \left(\sqrt{2}, 0, \frac{-y}{\sqrt{2}}\right), \left(0, \sqrt{2}, \frac{x}{\sqrt{2}}\right) \right\} \right) = 3,$$ thus according to the above cited theorem we have the smoothness of transition probability density for (\ref{model}). \\
To investigate the irreducibility of diffusion, we would like to use Stroock - Varadhan support theorem (\cite{Stroock}), so the question is whether that we can solve the corresponding control problem. The version that accounts for unbounded coefficients we use, was proved in \cite{Madari}. \\
Let $F$ be the subset of the absolutely continuous functions $u : [0, t] \rightarrow \mathbb{R}^d$ with $u(0) = 0$ such that $F$ contains every infinitely differentiable function form $[0,t]$ to $\mathbb{R}^d$ vanishing at zero.  For the ordinary differential equation 
\begin{equation} \begin{aligned} \label{kontrola}
& \dot{x}^{u}(t) = b(x^{u}(t)) + \sum_{i=1}^d \dot{u}_i(t) \sigma_i (x^u(t)) \\ 
& x^{u}(0) = x_0 \ \in \mathbb{R}^n \end{aligned} \end{equation}
we denote $\mathcal{O}(t,x_0) = \lbrace y \in \mathbb{R}^n : x^u(t) = y, u \in F \rbrace$.
\begin{thm}[Stroock - Varadhan support theorem, \cite{Madari}] \label{Suport}
Let $X_t$ be the solution to the Stratonovich SDE \begin{align} \label{SSDE} dX_t = b(X_t) dt + \sum_{i = 1}^d \sigma_i \circ dW, \ X(0) = x, \end{align} where the coefficients satisfy linear growth assumptions, $b$ is Lipschitz and $\sigma_i, 1 \leq i \leq d$ are smooth with bounded derivatives. Let $P_t$ be the transition probability function related to (\ref{SSDE}) and $\mathcal{O}(t,x)$ be the orbit to the corresponding equation (\ref{kontrola}). Then $\text{supp} \ P_t(x, \cdot) = \overline{\mathcal{O}(t,x)}$.
\end{thm}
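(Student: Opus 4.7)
The plan is to prove the theorem by establishing the two inclusions $\overline{\mathcal{O}(t,x)} \subset \mathrm{supp}\, P_t(x,\cdot)$ and $\mathrm{supp}\, P_t(x,\cdot) \subset \overline{\mathcal{O}(t,x)}$ separately, and then handling the unboundedness of the drift and the linearly growing $\sigma_i(x) = \sigma_i \cdot \text{(something)}$ by a stopping-time localization. I assume throughout that the linear growth plus Lipschitz/bounded-derivative hypotheses guarantee non-explosion of $X_t$, so that $X_t$ is well-defined pathwise up to any finite $t$.

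For the first inclusion, I would fix a control $u \in F$ with corresponding orbit $x^u$ and an open neighbourhood $U$ of $x^u(t)$. The idea is to apply Girsanov's theorem with the change of measure $dQ/dP = \mathcal{E}_t(\int_0^\cdot \dot u(s)\, dW_s)$; since $\dot u$ is smooth and thus bounded on $[0,t]$, Novikov's condition holds, so $Q \sim P$ and $\tilde W_s := W_s - u(s)$ is a Brownian motion under $Q$. Rewriting the SDE in Stratonovich form relative to $\tilde W$ produces an SDE whose drift absorbs the additional term $\sum_i \dot u_i(s) \sigma_i(X_s)$. Standard $L^2$-stability of Stratonovich SDEs with respect to a smooth perturbation of the driving noise, combined with the fact that $x^u$ solves the deterministic limit, lets me show that with positive $Q$-probability (hence positive $P$-probability by equivalence) the process $X$ stays in $U$ at time $t$.

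For the reverse inclusion, the standard tool is Wong--Zakai approximation: replace $W$ by its piecewise-linear interpolation $W^n$ on a mesh of size $t/n$ and consider the ODE $\dot X^n_s = b(X^n_s) + \sum_i \dot W^n_i(s)\, \sigma_i(X^n_s)$. By construction $X^n_t(\omega) \in \mathcal{O}(t,x)$ almost surely, and classical Wong--Zakai theorems for Stratonovich SDEs give $X^n \to X$ in probability uniformly on $[0,t]$ under the smoothness/Lipschitz hypotheses. Thus the law of $X_t$ is carried by $\overline{\mathcal{O}(t,x)}$.

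The main obstacle, and the reason one needs the version of \cite{Madari} rather than the original Stroock--Varadhan statement, is the unboundedness of the coefficients: the Wong--Zakai step in particular requires uniform-in-$n$ moment bounds on $X^n$ to prevent the approximating ODEs from blowing up before time $t$, and the Girsanov step requires that the controlled flow $x^u$ does not escape before $t$. I would handle both by introducing $\tau_R = \inf\{s : |X_s| \vee |X^n_s| > R\}$, carrying out each inclusion on $\{\tau_R > t\}$ where all coefficients are effectively bounded, and then letting $R \to \infty$ using the a priori non-explosion estimate that follows from the linear growth assumption via a Gronwall argument on $E|X_s|^2$. This localization is routine but technically the most delicate piece of the argument.
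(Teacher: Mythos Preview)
The paper does not prove this theorem at all: Theorem~\ref{Suport} is quoted from the literature (specifically from Gy\"ongy--Pr\"ohle~\cite{Madari}, which extends the original Stroock--Varadhan result to unbounded coefficients) and is used as a black box in the proof of Lemma~\ref{suportjeR}. So there is no ``paper's own proof'' to compare against; your proposal is an attempt to supply a proof that the paper deliberately outsources.

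That said, your outline follows the classical strategy and is broadly sound. The two-inclusion split, with Wong--Zakai for $\mathrm{supp}\,P_t(x,\cdot)\subset\overline{\mathcal O(t,x)}$ and a Girsanov/tube argument for the reverse inclusion, is exactly how the result is usually established, and the stopping-time localization is the standard device for passing from bounded to linearly growing coefficients. One point deserving more care is your first inclusion: after the Girsanov shift the process $X$ still solves a \emph{stochastic} equation under $Q$, with drift $b+\sum_i\dot u_i\sigma_i$ and noise $\sigma\circ d\tilde W$, so you do not immediately land on the deterministic orbit $x^u$. You need an additional small-ball estimate --- typically that with positive $Q$-probability $\tilde W$ stays uniformly small on $[0,t]$, combined with a pathwise stability bound for the Stratonovich solution map --- to conclude that $X_t$ hits any neighbourhood of $x^u(t)$. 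Your phrase ``standard $L^2$-stability'' glosses over this step; it is not hard, but it is where the smoothness of $\sigma_i$ and the Stratonovich form genuinely matter.
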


\begin{lem} \label{suportjeR}
Let $P_t$ be the transition function for the equation (\ref{model}). Then $\text{supp} \ P_t(x, \cdot) = \mathbb{R}^3$ for any $t > 0$ and $x \in \mathbb{R}^3$.
\end{lem}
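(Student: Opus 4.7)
The plan is to apply the Stroock--Varadhan support theorem (Theorem \ref{Suport}), which reduces the claim to showing that the orbit $\mathcal{O}(t,a)$ of the deterministic control system associated with (\ref{model}) is dense in $\mathbb{R}^3$ for every $t>0$ and every $a \in \mathbb{R}^3$. Written out, the control ODE reads
\begin{align*}
\dot x &= q_x - \lambda x + \sqrt{2}\, \dot u_1, \\
\dot y &= q_y - \lambda y + \sqrt{2}\, \dot u_2, \\
\dot z &= -2\lambda z + \tfrac{1}{2}(q_y x - q_x y) - \tfrac{y}{\sqrt 2} \dot u_1 + \tfrac{x}{\sqrt 2} \dot u_2.
\end{align*}

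The central observation is that one may regard the planar trajectory $(x(s), y(s))_{s\in[0,t]}$ as the free parameter: once it is prescribed, the first two equations force $\dot u_1 = \tfrac{1}{\sqrt{2}}(\dot x - q_x + \lambda x)$ and $\dot u_2 = \tfrac{1}{\sqrt 2}(\dot y - q_y + \lambda y)$. A short computation shows that upon substituting these back into the third equation the $q_x, q_y$ contributions cancel exactly, leaving the Heisenberg-type identity $\dot z = -2\lambda z + \tfrac{1}{2}(x \dot y - y \dot x)$. Integrating,
\[
z(t) = e^{-2\lambda t} z_0 + \tfrac{1}{2} \int_0^t e^{-2\lambda(t-s)} \bigl( x \dot y - y \dot x \bigr)(s)\, ds,
\]
so $z(t)$ records, up to the exponential weight, the signed area swept out by the planar path.

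Given any target $(x_f, y_f, z_f) \in \mathbb{R}^3$, I would then choose a smooth planar path connecting $(x_0, y_0)$ to $(x_f, y_f)$ on $[0, t/2]$ and sitting still at $(x_f, y_f)$ on $[t/2, 3t/4]$, and on $[3t/4, t]$ attach a smooth closed loop based at $(x_f, y_f)$, for instance a circle of radius $\rho$ traversed $k$ times in a given orientation. Because the weight $e^{-2\lambda(t-s)}$ is bounded away from $0$ on this last sub-interval, the loop's contribution to the area integral depends continuously on $\rho$ and, varying $\rho$, $k$ and the orientation, assumes every real value. An intermediate value argument then produces a loop yielding exactly the required $z(t)$. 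Rounding the path at the gluing points keeps $u\in C^\infty$ with $u(0)=0$, so $u \in F$. The only genuine nuisance is that the drift $b$ of (\ref{model}) is not globally Lipschitz, because of the cross terms $q_y x$, $q_x y$; this is handled by the standard localisation, truncating the coefficients outside a ball large enough to contain the entire constructed trajectory and applying Theorem \ref{Suport} in that setting before letting the radius go to infinity.
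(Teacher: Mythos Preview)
Your argument is correct and rests on a genuinely different device than the paper's. The paper first applies Girsanov: since $b-\tilde b=(q_x,q_y,\tfrac12(q_yx-q_xy))$ lies in the range of $\sigma$ (with bounded preimage $u=(q_x/\sqrt2,q_y/\sqrt2)$), the laws of the original diffusion and of the linear diffusion with drift $\tilde b=(-\lambda x,-\lambda y,-2\lambda z)$ are equivalent, so it suffices to check controllability for the latter. That system is globally Lipschitz, so Theorem~\ref{Suport} applies without any fuss, and the paper finishes by taking affine controls $\dot u_1=as+b$, $\dot u_2=cs+d$ and solving three linear equations in four unknowns.

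Your route avoids Girsanov and instead exploits the same underlying algebra at the level of the control ODE: your cancellation $\dot z=-2\lambda z+\tfrac12(x\dot y-y\dot x)$ is precisely the infinitesimal statement that the $q$-part of the drift is $\sigma$-removable. The loop/area construction is a nice geometric alternative to the paper's linear-algebra endgame. The price you pay is the localisation step: under \textbf{(B1)} alone the drift $b$ need not be globally Lipschitz, so Theorem~\ref{Suport} does not apply to (\ref{model}) directly, and you must pass through a truncated system. That step is standard but, to be rigorous, requires the \emph{path-space} form of the support theorem (so that a tube around your constructed trajectory, staying inside the truncation ball, has positive probability for the truncated and hence for the original diffusion); the marginal-in-$t$ version quoted as Theorem~\ref{Suport} is not quite enough on its own. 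The paper's Girsanov detour sidesteps this entirely.
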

\begin{proof}
We make of use the classical Girsanov transform \cite[pp. 166]{Oksendal} to simplify the control problem. Concretely the statement that the support of  diffusions $X_t, Y_t$ $$dX_t = b(X) dt + \sigma(X) dW$$ \begin{align} \label{transofrmace} dY_t = \tilde{b}(Y) dt + \sigma(Y) dW, \end{align} where $\sigma$ and $b$ are as in (\ref{model}) and $$\tilde{b} = (-\lambda x, -\lambda y, - 2 \lambda z)$$ is the same, provided we can find such $u : \mathbb{R}^3 \rightarrow \mathbb{R}^{2 \times 1} $ that $$\sigma u = b - \tilde{b}.$$ However it is easy, since $b - \tilde{b} = (q_x, q_y, \frac{1}{2}(q_y x - q_x y))$ and hence $$ \begin{pmatrix} 
\sqrt{2} & 0 \\ 0 & \sqrt{2} \\ \frac{-y}{\sqrt{2}} & \frac{x}{\sqrt{2}}
\end{pmatrix}  \begin{pmatrix}
\frac{q_x}{\sqrt{2}} \\ \frac{q_y}{\sqrt{2}}
\end{pmatrix} = \begin{pmatrix}
q_x \\ q_y \\ \frac{1}{2}(q_y x - q_x y)
\end{pmatrix} .$$ Hence to establish the theorem it suffices to prove the irreducibility of transition function corresponding to (\ref{transofrmace}).
Since the equation (\ref{transofrmace}) satisfies the Theorem \ref{Suport}, we only need to prove controllability of the system  
\begin{equation}
\begin{aligned} \label{zkontorluj}
& \dot{x} = \sqrt{2}\dot{u}_1 - \lambda x \\  
& \dot{y} = \sqrt{2}\dot{u}_2 - \lambda y \\
& \dot{z} = -\frac{y}{\sqrt{2}} \dot{u}_1 + \frac{x}{\sqrt{2}}\dot{u}_2 - 2 \lambda z
\end{aligned} \end{equation}  for $u \in H$, i. e. to show that from any starting point $(x_0, y_0, z_0)$ we can choose such $u \in H$ that $x(t) = x_t, y(t) = y_t, z(t) = z_t$, where $(x_t, y_t, z_t) \in \mathbb{R}^3$ are prescribed ending points. If we simply choose control $\dot{u}_1 (s) = a s + b$, $\dot{u}_2(s) = c s +d, $ then the problem (\ref{zkontorluj}) is reduced to solving three linear equations with four parameters, so the Lemma is proved. 
\end{proof}
\noindent The proof of existence of Lyapunov function for the operator (\ref{1Doperator}) satisfying (\ref{Ljapunov}) is elementary, albeit bit tedious.
\begin{lem} \label{VjeLjapunov}
Let $\mathcal{L}$ be the operator defined by (\ref{1Doperator}) under the assumptions \textbf{(B1)}. For the function $V^k = ((x^2 + y^2)^2 + z^2)^k$, $k \in \mathbb{N}$, there exist constants $c_k, C_k > 0$ such that \begin{align} \label{ljapunovskanerv} \mathcal{L} V^k + c_k V^k \leq C_k \ \forall (x, y, z) \in \mathbb{R}^3.\end{align} 
\end{lem}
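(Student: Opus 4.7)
The plan is to exploit the fact that $W := (x^2+y^2)^2 + z^2$ is an eigenfunction of the dilation $D$ with eigenvalue $4$, so the $-\lambda D$ part of $\mathcal{L}$ supplies the desired linear dissipation in $V^k = W^k$, while every other contribution has strictly lower polynomial growth and can be absorbed by Young's inequality.

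First I would compute the action of the building blocks of $\mathcal{L}$ on $W$. Writing $\rho^2 = x^2 + y^2$, a direct calculation from the definitions of $X, Y, D$ yields
\begin{align*}
DW = 4W, \quad XW = 4x\rho^2 - yz, \quad YW = 4y\rho^2 + xz,
\end{align*}
\begin{align*}
(X^2 + Y^2)W = \tfrac{33}{2}\rho^2, \quad (XW)^2 + (YW)^2 = 16\rho^6 + \rho^2 z^2.
\end{align*}
Next I apply the chain rule for second order operators. Since $X, Y$ are first order vector fields and $-\lambda D + q_x X + q_y Y$ is a first order drift, we have $\mathcal{L}(f(W)) = f'(W)\,\mathcal{L}W + f''(W)\bigl[(XW)^2 + (YW)^2\bigr]$. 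Plugging in $f(t) = t^k$ and substituting the quantities above,
\begin{align*}
\mathcal{L}V^k &= -4k\lambda V^k + kW^{k-1}\Bigl[\tfrac{33}{2}\rho^2 + q_x(4x\rho^2 - yz) + q_y(4y\rho^2 + xz)\Bigr] \\
&\quad + k(k-1)W^{k-2}\bigl(16\rho^6 + \rho^2 z^2\bigr).
\end{align*}
The first summand is the required dissipation; all remaining terms are corrections to control.

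To quantify the corrections I use the elementary bounds $\rho \leq W^{1/4}$ and $|z| \leq W^{1/2}$, which follow from $\rho^4 \leq W$ and $z^2 \leq W$. They imply $\rho^2 \leq W^{1/2}$, $|x|\rho^2 + |y|\rho^2 \leq 2W^{3/4}$, $|xz| + |yz| \leq 2W^{3/4}$, $\rho^6 \leq W^{3/2}$, and $\rho^2 z^2 \leq W^{3/2}$. Combined with $\|q_x\|_\infty \vee \|q_y\|_\infty \leq C$ from assumption \textbf{(B1)}, the correction terms in $\mathcal{L}V^k$ are dominated by $O(W^{k-1/4}) + O(W^{k-1/2})$. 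Young's inequality $W^{k-\beta} \leq \varepsilon W^k + C_{\varepsilon,\beta}$, valid for any $\beta,\varepsilon > 0$, then absorbs each correction into $\varepsilon V^k$ plus a constant. Choosing $\varepsilon = 2k\lambda$, one obtains the desired bound with $c_k = 2k\lambda$ and $C_k$ equal to the accumulated constant.

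There is no serious obstacle beyond bookkeeping of exponents: the conceptual point is that $W$ is homogeneous of degree $4$ under the dilation $D$, whereas the subelliptic term $(X^2+Y^2)W$, the carré du champ $(XW)^2+(YW)^2$, and the bounded interaction drift all contribute terms of strictly lower homogeneity. Care needs only be taken to ensure that the Young inequality is applied uniformly in $(x,y,z)$, which is automatic since all estimates are expressed solely in terms of $W$.
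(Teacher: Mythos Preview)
Your argument is correct and in fact cleaner than the paper's own proof. The paper expands $\mathcal{L}V^k$ directly in Cartesian coordinates, factors out $V^{k-2}$, and then tracks the highest-order monomials $x^8,\,y^8,\,z^4,\,x^4y^4$ in the bracket, applying Young's inequality to individual mixed terms like $|zx| \lesssim |z|^{3/2} + |x|^3$; the constant $c_k$ emerges only implicitly from the sign of these leading coefficients. You instead use the structural fact $DW = 4W$ and the chain rule $\mathcal{L}(f(W)) = f'(W)\mathcal{L}W + f''(W)\bigl[(XW)^2 + (YW)^2\bigr]$ to isolate the dissipation $-4k\lambda V^k$ exactly, then bound every remaining term uniformly by a lower power of $W$. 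This buys an explicit value $c_k = 2k\lambda$ and makes transparent \emph{why} the argument works: the second-order and drift contributions are of strictly lower homogeneity than the dilation term. The paper's coordinate calculation and your homogeneity argument are equivalent in content, but yours generalises more readily to the other examples in the final section (Grushin, Martinet), where the same eigenfunction-of-$D$ mechanism drives the Lyapunov estimate.
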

\begin{proof}
We first compute the case for $V^k$, $k = 1$ (and omit the index in such case) and then proceed to general $k$. Using that $V_{xz} = V_{yz} = 0$ we calculate \begin{equation} \begin{aligned} \label{Vodhady} &\mathcal{L}V + cV = V_x (q_x - \lambda x) + V_y (q_y - \lambda y) + V_z (-2\lambda z + \frac{1}{2} (q_y x - q_x y)) \\
&+ V_{xx} + V_{yy} + V_{zz} \frac{1}{4}(x^2 + y^2) = 4x(x^2 + y^2)(q_x - \lambda x)\\ & + 4 y (x^2 + y^2)(q_y - \lambda y) + 2z(-2 \lambda z + \frac{1}{2}(q_y x - q_x y)) + 16 (x^2 + y^2)\\ & + \frac{1}{2} (x^2 + y^2) + cx^4 + cy^4 + 2x^2y^2c + cz^2 \\ & \leq (x^4 +y^4 + z^2 + 2x^2y^2)(c-4\lambda) + o(x^4) + o(y^4) + o(z^2). \end{aligned} \end{equation}
To obtain last inequality one uses bounds for $q_\cdot$'s and then Young inequality, e. g.  $|Czx| \lesssim |z|^{\frac{3}{2}} + |x|^{3}$(we use $\lesssim$ throughout the paper to denote the statement $A \lesssim B \Longleftrightarrow \ \exists C > 0: \ A \leq CB$). The resulted inequality obviously implies that for any $\lambda > 0$ we can choose $c > 0$ in such way, that $\mathcal{L}V + cV$ is bounded. 
For $V^k$ we get \begin{equation} \begin{aligned} \label{Vkodhady} & \mathcal{L}V^k + c_kV^k = V^{k-2}k \big(V V_x (q_x - \lambda x) + VV_y(q_y - \lambda y) \\ & +V V_z (-2\lambda z + \frac{1}{2}(q_y x - q_x y)) + (k-1)V_x^2 + V V_{xx} \\ & +(k-1)V_y^2 + VV_{yy} + \frac{1}{4}(x^2+y^2)(VV_{zz} + (k-1) V_z^2) \\ & - y (k-1)V_xV_z + x(k-1)V_yV_z +\frac{c_k}{k} V^2 \big). \end{aligned}
\end{equation} In a similar manner as we obtained (\ref{Vodhady}), (\ref{Vkodhady}) can be estimated as \begin{align*} &
\mathcal{L}V^k + c_kV^k \leq V^{k-2}k \big((\frac{c_k}{k} - 4 \lambda)(x^8 + y^8 +z^4) \\& + x^4y^4(\frac{4c_k}{k}- 8\lambda) + o(x^8) + o(y^8) + o(z^4)\big).
\end{align*} Notice that we not only proved boundedness of $\mathcal{L}V^k + c_kV^k$, but even obtained $$\mathcal{L}V^k + c_kV^k \xrightarrow{\|(x,y,z)\| \rightarrow + \infty} - \infty.$$
\end{proof} 
\noindent The Meyn - Tweedie theory as stated in Theorem \ref{MejnTwidy} now ensures exponential convergence to equilibrium for diffusion corresponding to the operator (\ref{1Doperator}). 
\begin{proof}[Proof of Theorem \ref{konecnedimvysledky}]
The proof differs only slightly from what we just showed for the case of $\mathbb{R}^3$ thanks to the structure of our coefficients $b$ and $\sigma$ and assumptions \textbf{(H1), (H3)}. As for the support problem, the situation is pretty much the same as in Lemma \ref{suportjeR}, and the smoothness of transition probability follows again immediately from H\"{o}rmander type theorem \ref{HormavPrav}. \\
It remains to show that for $W^k$ there exist constants $c_k, C_k > 0$ such that $$L_n W^k_n + c_k W^k_n \leq C_k$$ holds uniformly regardless of $n$. By denoting $V^k_i = ((x^2_i + y^2_i)^2 + z_i^2)^k$ and $\mathcal{L}_i = X_i^2 + Y^2_i - \lambda_i D_i + q_{x_i} X_i + q_{y_i} Y_i$, we can write \begin{align} \label{Wvypocet} L_n W^k_n + c_k W^k_n = c_k + \sum_{i=1}^n u(i) (\mathcal{L}_i V^k_i + c_k V^k_i).\end{align} The analysis of expression $\mathcal{L}_i V^k_i + c_k V^k_i$ was done in previous Lemma \ref{VjeLjapunov}. Notice that thanks to the assumption $\inf_{i \in \mathbb{Z}^d} \lambda_i > 0$ and the fact that bound for $q_\cdot$'s is uniform, the $c_k$ can be chosen in such way, that the following is true \begin{align*}
\exists \tilde{C}_k >0 \ : \forall_{1 \leq i \leq n} \ \ \mathcal{L}_i V^k_i + c_k V_i^k \leq \tilde{C}_k.
\end{align*} We install this into (\ref{Wvypocet}) and using hypothesis $\sum_{i = 1}^\infty u(i) < \infty$ we infer the desired bounds \begin{align*}& L_n W^k + c_k W^k \leq c_k + \sum_{i = 1}^n u(i) \tilde{C}_k \\ & \leq c_k + \tilde{C}_k \sum_{i = 1}^\infty u(i) := C_k < + \infty.
\end{align*} Hence indeed the Theorem \ref{MejnTwidy} can be applied  to prove the statement.
\end{proof}
\noindent \textbf{Remark.} It should be noted that the constants in the formula (\ref{kondimexponencialniergodicita}) cannot be chosen in such a way, that they would be independent of the dimension, even though the constants in (\ref{obecnyLjapunov}) are. It cannot be expected that one could prove the convergence in the total variation norm in the infinite dimension for interacting particle system. Let us add a simple reason for this fact. 
\begin{observ}
Let $\varpi$ and $\varrho$ be two probability measures on $\mathbb{R}$, such that $\mu \neq \nu$. Then $\|\varpi^n - \varrho^n\|_{TV} \xrightarrow{n \rightarrow \infty} 1$, where $\|\cdot\|_{TV}$ means total variation norm.
\end{observ}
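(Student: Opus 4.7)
The plan is to exploit the fact that $\varpi \neq \varrho$ to find a test set on which the two measures disagree, and then use the law of large numbers to show that the empirical frequency of this set under the two product measures concentrates on disjoint regions.

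More concretely, since $\varpi \neq \varrho$ on $(\mathbb{R}, \mathcal{B}(\mathbb{R}))$, there exists a Borel set $A \subset \mathbb{R}$ with $p := \varpi(A) \neq \varrho(A) =: q$. Put $f = \mathbf{1}_A$ and consider, on the product space $\mathbb{R}^n$, the statistic $S_n(x_1, \ldots, x_n) = \frac{1}{n}\sum_{i=1}^n f(x_i)$. Under $\varpi^n$ the coordinates are i.i.d.\ with mean $p$, while under $\varrho^n$ they are i.i.d.\ with mean $q$, so by the strong law of large numbers $S_n \to p$ almost surely under $\varpi^n$ and $S_n \to q$ almost surely under $\varrho^n$.

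Fix $\varepsilon = |p - q|/3 > 0$ and define the measurable sets $E_n = \{ x \in \mathbb{R}^n : |S_n(x) - p| < \varepsilon \}$. Then $\varpi^n(E_n) \to 1$ while $\varrho^n(E_n) \leq \varrho^n(|S_n - q| > \varepsilon) \to 0$, because $E_n \subset \{|S_n - q| > \varepsilon\}$ by the triangle inequality. Using the variational characterization
\begin{equation*}
\|\varpi^n - \varrho^n\|_{TV} \geq \varpi^n(E_n) - \varrho^n(E_n),
\end{equation*}
we obtain $\|\varpi^n - \varrho^n\|_{TV} \to 1$, and the matching upper bound $\|\cdot\|_{TV} \leq 1$ is automatic.

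No step here is a serious obstacle; the only genuine content is the choice of a distinguishing Borel set $A$, after which the LLN does all the work. An alternative, equally short route would be to use the Hellinger/Bhattacharyya coefficient $\rho(\varpi, \varrho) = \int \sqrt{d\varpi\, d\varrho}$, which tensorizes as $\rho(\varpi^n, \varrho^n) = \rho(\varpi, \varrho)^n$; since $\rho(\varpi, \varrho) < 1$ whenever $\varpi \neq \varrho$ by Cauchy--Schwarz, the product coefficient tends to $0$ and the standard inequality $\|\varpi^n - \varrho^n\|_{TV} \geq 1 - \rho(\varpi^n, \varrho^n)$ gives the result. I would go with the LLN presentation since it keeps the observation at the elementary level intended.
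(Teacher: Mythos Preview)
Your argument is correct and essentially identical to the paper's: both pick a distinguishing test function (the paper uses $f\in C_b(\mathbb{R})$ with $\varpi f\neq\varrho f$, you use $f=\mathbf{1}_A$), form the empirical average, and invoke the law of large numbers to show the sets $\{|S_n-\varpi f|<\varepsilon\}$ have $\varpi^n$-mass tending to $1$ and $\varrho^n$-mass tending to $0$. The only cosmetic differences are your use of an indicator versus a continuous function and the strong versus weak law; your added remark on the Hellinger coefficient is a genuine alternative route not in the paper.
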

\begin{proof}
As $\varpi \neq \varrho$, there exists $f \in C_b(\mathbb{R})$, such that $\varpi f \neq \varrho f$. \\ Say $\epsilon = |\varpi f - \varrho f| >0$. Define sets $A_n = \lbrace x \in \mathbb{R}^n : |\frac{1}{n} \sum_{i = 1}^n f(x_i) - \varpi f| < \frac{\epsilon}{2} \rbrace$. Weak Law of large numbers asserts $\varpi^n(A_n) \rightarrow 1$, while $\varrho^n(A_n) \rightarrow 0.$
\end{proof} 
\noindent Therefore even for the system without any interactions, one cannot have the constants independent of the dimension, unless the convergence to the invariant measure happens in finite amount of time. 
\section{Construction of the infinite dimensional measure}
There are several papers dealing with infinite dimensional martingale problems (\cite{Bass1}, \cite{Bass2}, \cite{Zamboti}) that establishes uniqueness as well, but all are based in elliptic settings and none can be directly applied to our case.  \\
The following version of Arzel\` a - Ascoli theorem follows easily from the general version proved in \cite[Theorem 47.1]{Munkres}.
\begin{thm}[Arzel\` a - Ascoli] \label{A-A}
Let $Y$ be a complete metric space and $f_n \in C([0, \infty), Y)$ sequence of equicontinuous functions. Endow $C([0, \infty), Y)$ with the topology of uniform convergence on compacts. If $\lbrace f_n(t) \rbrace$ is precompact in $Y$ on a dense set of $t \in [0, \infty)$, then $\lbrace f_n \rbrace$ is precompact in $C([0, \infty), Y)$. 
\end{thm}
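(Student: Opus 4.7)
The plan is to reduce the statement to the general Arzel\`a--Ascoli theorem (Munkres 47.1), which, applied to $X = [0,\infty)$ and target $(Y,d)$, asserts that a subfamily $\mathcal{F} \subseteq C([0,\infty),Y)$ is precompact in the topology of compact convergence provided it is equicontinuous and, \emph{for every} $t \in [0,\infty)$, the set $\{f(t) : f \in \mathcal{F}\}$ has compact closure in $Y$. Equicontinuity is given by hypothesis, and the topology of compact convergence on $C([0,\infty),Y)$ coincides with uniform convergence on compact subsets, so the only gap to fill is upgrading pointwise precompactness from the given dense set $D \subseteq [0,\infty)$ to every point of $[0,\infty)$.

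The first step is thus to fix an arbitrary $t_0 \in [0,\infty)$ and $\varepsilon > 0$ and, using equicontinuity of $\{f_n\}$ at $t_0$, to select $\delta > 0$ such that $d(f_n(s), f_n(t_0)) < \varepsilon/3$ for every $n$ and every $s$ with $|s - t_0| < \delta$. Next I would pick some $s \in D$ with $|s - t_0| < \delta$, which is possible by density of $D$. Since $\{f_n(s)\}$ is precompact in $Y$, it is in particular totally bounded, so it can be covered by finitely many balls $B(y_1, \varepsilon/3), \ldots, B(y_N, \varepsilon/3)$.

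The third step is to transport this cover to $t_0$: for each $n$, choose an index $j(n)$ with $d(f_n(s), y_{j(n)}) < \varepsilon/3$, whence by the triangle inequality $d(f_n(t_0), y_{j(n)}) < 2\varepsilon/3 < \varepsilon$. This shows that $\{f_n(t_0)\}$ is totally bounded, and since $Y$ is complete, it is precompact. As $t_0$ and $\varepsilon$ were arbitrary, pointwise precompactness holds on all of $[0,\infty)$, and Munkres 47.1 applies to give the desired precompactness of $\{f_n\}$ in $C([0,\infty), Y)$.

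There is really no substantial obstacle here; the proof is essentially the standard ``dense--set--plus--equicontinuity'' trick. The only point worth mild care is ensuring $Y$ is complete when invoking that ``totally bounded implies precompact'' in the transported cover, but this is exactly the assumption of the lemma.
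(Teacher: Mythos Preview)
Your proposal is correct and aligns exactly with the paper's approach: the paper does not give a detailed proof but simply states that the result ``follows easily from the general version proved in \cite[Theorem 47.1]{Munkres}.'' You have supplied precisely the standard $\varepsilon/3$ argument (equicontinuity plus density plus completeness of $Y$) needed to upgrade pointwise precompactness from the dense set to all of $[0,\infty)$, after which Munkres~47.1 applies directly.
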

\noindent To prove equicontinuity we use a variant of Kolmogorov continuity theorem (see \cite[chap. 8]{BassSP} for details).
\begin{thm} \label{stejnspoj}
Let $X^n$ be continuous processes taking values in some metric space $(S, \rho)$. Suppose for any $T > 0$ there exists constants $C(T), \epsilon > 0$ and $p > 0$ such that $$\sup_n E \ \rho(X^n_s, X^n_t)^p \leq C(T) |t  - s|^{1 + \epsilon} \ \  0 \leq s \leq t \leq T. $$ Then $\lbrace X^n \rbrace$ is equicontinuous family of processes with probability $1$.  
\end{thm}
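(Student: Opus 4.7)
The plan is to adapt the standard dyadic chaining argument underlying the classical Kolmogorov--Chentsov continuity theorem, taking care that all quantitative bounds depend on the hypothesis only through the constants $C(T)$, $p$, $\epsilon$, which are uniform in $n$ by assumption.

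First I fix $T > 0$ and choose a H\"older exponent $\gamma \in (0, \epsilon/p)$. For each $k \geq 0$ I introduce the dyadic grid $t_j^k = j T 2^{-k}$, $0 \leq j \leq 2^k$, and the maximal increment
\begin{equation*}
M_k^n = \max_{0 \leq j < 2^k} \rho\bigl(X^n_{t_j^k}, X^n_{t_{j+1}^k}\bigr).
\end{equation*}
A union bound combined with Chebyshev's inequality applied to the moment hypothesis gives
\begin{equation*}
P\bigl(M_k^n \geq 2^{-\gamma k}\bigr) \leq 2^k \cdot 2^{\gamma p k} \cdot C(T) \bigl(T 2^{-k}\bigr)^{1 + \epsilon} = C(T)\, T^{1 + \epsilon} \cdot 2^{-k(\epsilon - \gamma p)}.
\end{equation*}
Since $\epsilon - \gamma p > 0$, the right-hand side is summable in $k$, and crucially the total sum $\sum_k P(M_k^n \geq 2^{-\gamma k}) \leq B(T)$ is bounded independently of $n$.

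Next, for each fixed $n$ the Borel--Cantelli lemma yields an almost surely finite random level $N(\omega, n)$ beyond which $M_k^n(\omega) < 2^{-\gamma k}$. A telescoping argument across dyadic levels --- writing any two dyadic times $s < t$ with $|t - s| \leq 2^{-N}$ as a concatenation of adjacent dyadic intervals of decreasing length --- then produces the H\"older estimate
\begin{equation*}
\rho\bigl(X^n_s(\omega), X^n_t(\omega)\bigr) \leq C_\gamma \sum_{k \geq N(\omega, n)} 2^{-\gamma k} \leq C'_\gamma |t - s|^\gamma,
\end{equation*}
and continuity of $X^n$ (assumed) extends this from the dense dyadic set to all $s, t \in [0, T]$.

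Finally, to obtain the equicontinuity of the family in the form needed for Theorem \ref{A-A}, given any $\eta > 0$ I pick $N$ so large that $C(T) T^{1 + \epsilon} \sum_{k \geq N} 2^{-k(\epsilon - \gamma p)} < \eta$; then uniformly in $n$ the bad event $\bigcup_{k \geq N}\{M_k^n \geq 2^{-\gamma k}\}$ has probability at most $\eta$, and on its complement the modulus of continuity of $X^n$ on $[0, T]$ is dominated by $C'_\gamma \delta^\gamma$, a bound that is independent of $n$. Exhausting $[0, \infty)$ by $T \in \mathbb{N}$ and intersecting the exceptional events gives a single almost sure set on which the family $\{X^n\}$ has a common modulus of continuity on every compact subinterval. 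There is no substantive obstacle here: the entire content of the lemma is that the Kolmogorov--Chentsov proof is quantitative and its output depends on the hypothesis only through $C(T), p, \epsilon$, which are assumed uniform in $n$; the only mild subtlety is that passing from the dyadic grid to all of $[0, T]$ uses the a priori continuity of each $X^n$.
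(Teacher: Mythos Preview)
The paper does not supply a proof of this statement; it is quoted with a reference to Bass's textbook and used as a black box for the tightness argument in Corollary~\ref{tesnostaproximace}. Your argument is the standard Kolmogorov--Chentsov dyadic chaining, and the substantive part --- your penultimate paragraph, showing that for every $\eta > 0$ one can choose a level $N$ so that, uniformly in $n$, outside an event of probability at most $\eta$ the modulus of continuity of $X^n$ on $[0,T]$ is dominated by $C'_\gamma \delta^\gamma$ --- is correct and is exactly the stochastic equicontinuity needed for tightness via Theorem~\ref{A-A}.

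Your final sentence, however, overreaches. The claim that ``intersecting the exceptional events gives a single almost sure set on which the family $\{X^n\}$ has a common modulus of continuity'' is not supported by the preceding estimates and is in general false under the stated hypotheses. Take $X^n_t = Y_n\, t$ on $[0,1]$ with $(Y_n)$ i.i.d.\ real random variables having $E|Y_n|^p < \infty$ for some $p > 1$ but unbounded support; then $E|X^n_t - X^n_s|^p = E|Y_1|^p\,|t-s|^p$ satisfies the hypothesis with $\epsilon = p-1$, yet $\sup_n |Y_n| = \infty$ almost surely, so no almost-sure common modulus of continuity exists. The correct reading of the theorem (and of the cited source) is the tightness form you have already established: for every $\eta,\varepsilon > 0$ there is $\delta > 0$ with $\sup_n P\bigl(\sup_{|t-s|<\delta,\; s,t\le T}\rho(X^n_s,X^n_t) > \varepsilon\bigr) < \eta$. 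Simply drop the last sentence, or restate the conclusion in this probabilistic form.
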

\noindent The space on which we construct our measure is dictated to us by our Lyapunov function for (\ref{1Doperator}), so that we will be able to utilize the uniform bound (\ref{kondimexponencialniergodicita}). However we also have to choose space such that the Theorem \ref{stejnspoj} will be satisfied. For the sake of completeness let us clarify, that function of $V$ type indeed equips $\mathbb{R}^3$ with the metric. 
\begin{lem}
Endow $\mathbb{R}^3$ with the following operation $d$ : $$d(a,b) = \sqrt[4]{((a_x - b_x)^2 + (a_y - b_y)^2)^2 + (a_z - b_z)^2}. $$ $(\mathbb{R}^3, d)$ is then a metric space.
\end{lem}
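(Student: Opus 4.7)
The plan is to verify the three metric axioms. Symmetry is immediate, and $d(a,b)=0$ forces each of $a_x-b_x$, $a_y-b_y$, $a_z-b_z$ to vanish, so $a=b$; what actually requires work is the triangle inequality.

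First I would reduce to a two-variable statement. Setting $N_1(u) = \sqrt{u_1^2+u_2^2}$ (Euclidean norm in the first two coordinates) and $N_2(u) = |u_3|$, both of which are seminorms on $\mathbb{R}^3$, one can write $d(a,b) = f(N_1(a-b), N_2(a-b))$ with $f : [0,\infty)^2 \to [0,\infty)$ defined by $f(x,y) = (x^4 + y^2)^{1/4}$. Since $f$ is coordinatewise nondecreasing, combining the triangle inequalities for $N_1$ and $N_2$ with monotonicity of $f$ reduces matters to showing subadditivity of $f$:
$$f(x_1+x_2,\, y_1+y_2) \leq f(x_1,y_1) + f(x_2,y_2), \qquad x_i, y_i \geq 0.$$

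Next I would prove this subadditivity by a direct fourth-power expansion. Writing $A = f(x_1,y_1)$, $B = f(x_2,y_2)$, raising both sides to the fourth power and cancelling $A^4 + B^4$ reduces the claim to
$$4x_1^3 x_2 + 6 x_1^2 x_2^2 + 4 x_1 x_2^3 + 2 y_1 y_2 \leq 4 A^3 B + 6 A^2 B^2 + 4 A B^3.$$
The pure $x$ terms on the left are dominated by the corresponding terms on the right via the trivial bounds $A \geq x_1$ and $B \geq x_2$ (consequences of $A^4 \geq x_1^4$, $B^4 \geq x_2^4$). For the troublesome cross term $2 y_1 y_2$ I would apply Cauchy--Schwarz to the vectors $(x_1^2, y_1)$ and $(x_2^2, y_2)$ in $\mathbb{R}^2$, which gives
$$A^2 B^2 = \sqrt{(x_1^4+y_1^2)(x_2^4+y_2^2)} \geq x_1^2 x_2^2 + y_1 y_2,$$
so $6 A^2 B^2 \geq 6 x_1^2 x_2^2 + 2 y_1 y_2$, absorbing the $y$-cross-term into the middle coefficient.

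The main obstacle is the scaling asymmetry: $f$ is $1$-homogeneous only under the Heisenberg-type dilation $(x,y)\mapsto(\lambda x, \lambda^2 y)$, not under the usual one, so standard norm or convex-unit-ball arguments do not apply directly. The saving point is that Cauchy--Schwarz applied to the pair $(x^2, y)$ is exactly tuned to this mixed scaling, which is why the expansion closes.
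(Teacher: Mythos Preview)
Your argument is correct. Both you and the paper begin the same way---only the triangle inequality needs work, and one reduces via the triangle inequalities for the Euclidean part and the $z$-part to a two-variable subadditivity statement---but from there the executions diverge.

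The paper absorbs the square root into the $z$-coordinate: writing $A = N_1(a-b)^4$, $D = (a_z-b_z)^2$ etc., the hypotheses become $A^{1/4}\le B^{1/4}+C^{1/4}$ and $D^{1/4}\le E^{1/4}+F^{1/4}$ (the latter using subadditivity of $t\mapsto\sqrt{t}$), and the goal reduces to $\bigl((B^{1/4}+C^{1/4})^4+(E^{1/4}+F^{1/4})^4\bigr)^{1/4}\le (B+E)^{1/4}+(C+F)^{1/4}$, which is exactly the Minkowski inequality for the $\ell^4$-norm on $\mathbb{R}^2$. So the paper trades the anisotropic function $(x^4+y^2)^{1/4}$ for the isotropic $\ell^4$-norm by pushing the anisotropy into the $z$-variable beforehand, and then quotes a named inequality. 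Your route keeps the anisotropy in $f$ and proves subadditivity of $(x^4+y^2)^{1/4}$ by a bare-hands fourth-power expansion, using Cauchy--Schwarz on the vectors $(x_i^2,y_i)$ to control the cross term $2y_1y_2$. This is more computational but entirely self-contained, and your observation that Cauchy--Schwarz applied to $(x^2,y)$ is exactly adapted to the Heisenberg scaling is a nice structural remark that the paper's appeal to Minkowski hides.
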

\begin{proof}
The only non-trivial part is the triangle inequality. Hence we want to prove \begin{equation}  \begin {aligned} \label{trojnerovnost}
&\sqrt[4]{((a_x - b_x)^2 +  (a_y -  b_y)^2)^2 + (a_z - b_z)^2}& \leq \\
& \sqrt[4]{((a_x - c_x)^2 + (a_y - c_y)^2)^2 + (a_z - c_z)^2}   \\ 
& +\sqrt[4]{((c_x - b_x)^2 + (c_y - b_y)^2)^2 + (c_z - b_z)^2}.
\end{aligned}
\end{equation}
Notice that (\ref{trojnerovnost}) is clearly valid if either terms on $z$ axis are zero, or both $x$ and $y$ terms are zero. Therefore it remains to prove that if for $A, B, C, D, E, F \geq 0$ \begin{equation} \begin{aligned} \label{Minkowski} \sqrt[4]{A} \leq \sqrt[4]{B} + \sqrt[4]{C} \\
 \sqrt[4]{D} \leq \sqrt[4]{E} + \sqrt[4]{F},
\end{aligned} \end{equation}
then \begin{align} \label{finalnerovnost} \sqrt[4]{A + D} \leq \sqrt[4]{B + E} + \sqrt[4]{C + F}. \end{align}
The left side in (\ref{finalnerovnost}) is clearly maximized, if the left sides in (\ref{Minkowski}) is maximized. This happens, if we have equality in (\ref{Minkowski}). Hence it suffices to prove $$\sqrt[4]{(\sqrt[4]{B} + \sqrt[4]{C})^4 + (\sqrt[4]{E} + \sqrt[4]{F})^4} \leq  \sqrt[4]{B + E} + \sqrt[4]{C + F}, $$ but this follows from ordinary Minkowski inequality for $4$ - norm on $\mathbb{R}^2$. 
\end{proof}
\noindent We will denote by $\| \cdot \|_{\mathbb{H}}$ the function that assigns to $a \in \mathbb{R}^3$ value corresponding to the metric just defined, so that $\|a\|_{\mathbb{H}} = \sqrt[4]{((a_x^2 + a_y^2)^2 + a_z^2}.$ Given $d$ dimensional lattice $\mathbb{Z}^d$, we introduce the weighted space $$S = \lbrace a \in \mathbb{H}^{\mathbb{Z}^d} : \ \sum_{i \in \mathbb{Z}^d} \|a_i\|^8_{\mathbb{H}} u(i) < + \infty \rbrace.$$ For now it suffices to assume about the weights \textbf{(H4)} \begin{itemize}
\item 
$ \sum_{i \in \mathbb{Z}^d} u(i) < + \infty , \ u(i) > 0.$
\end{itemize} From the Lemma above we can infer following usual considerations that $S$ with the metric $\|a - b\|_S = \sqrt[8]{\sum_{i \in \mathbb{Z}^d} \|a_i - b_i\|_{\mathbb{H}}^8 u(i)}$, $a, b \in S$ is complete separable metric space and so consequently $\Omega = C([0, \infty), S)$ is Polish too. Let us describe compact sets of $S$. 
\begin{lem} \label{kompaktnostvE} Let $M \subset S$. Assume that $M$ is bounded and the following condition $$\forall \epsilon > 0 \  \exists n_0 \in \mathbb{N} \ \forall a \in M: \sum_{i = n_0}^{\infty} \|a_i\|_{\mathbb{H}}^8 u(i) < \epsilon.$$ Then $M$ is precompact in $S$.
\end{lem}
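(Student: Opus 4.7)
The plan is to show that every sequence in $M$ admits a subsequence converging in the metric of $S$; since $S$ was observed to be complete, this is equivalent to precompactness. Let $\{a^{(n)}\}_{n \in \mathbb{N}} \subset M$ be arbitrary and write $R = \sup_{a \in M} \|a\|_S < +\infty$.

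First I extract a pointwise convergent subsequence by a Cantor diagonal argument. For each fixed $i \in \mathbb{Z}^d$ one has $\|a^{(n)}_i\|_\mathbb{H}^8 u(i) \leq \|a^{(n)}\|_S^8 \leq R^8$, so $\{a^{(n)}_i\}_n$ is a bounded sequence in $(\mathbb{R}^3, \|\cdot\|_\mathbb{H})$; this metric is topologically equivalent to the Euclidean one on $\mathbb{R}^3$, hence closed bounded sets are compact. Enumerating $\mathbb{Z}^d = \{i_1, i_2, \ldots\}$ and inductively extracting nested subsequences convergent at $i_1, i_2, \ldots$, the diagonal subsequence, which I continue to denote $\{a^{(n)}\}$, satisfies $a^{(n)}_i \to a^*_i$ in $\mathbb{H}$ for every $i \in \mathbb{Z}^d$.

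Next I identify the limit and upgrade the convergence. By Fatou's lemma applied to the counting measure weighted by $u$,
\begin{align*}
\sum_{i \in \mathbb{Z}^d} \|a^*_i\|_\mathbb{H}^8 u(i) \leq \liminf_n \sum_{i \in \mathbb{Z}^d} \|a^{(n)}_i\|_\mathbb{H}^8 u(i) \leq R^8,
\end{align*}
so $a^* \in S$, and the same Fatou argument applied to tails shows $a^*$ inherits the uniform tail condition assumed on $M$. Given $\varepsilon > 0$, select a finite set $F_\varepsilon \subset \mathbb{Z}^d$ (a box from the hypothesis) such that $\sum_{i \notin F_\varepsilon} \|a^{(n)}_i\|_\mathbb{H}^8 u(i) < \varepsilon$ for every $n$, and simultaneously $\sum_{i \notin F_\varepsilon} \|a^*_i\|_\mathbb{H}^8 u(i) < \varepsilon$. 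Splitting
\begin{align*}
\|a^{(n)} - a^*\|_S^8 = \sum_{i \in F_\varepsilon} \|a^{(n)}_i - a^*_i\|_\mathbb{H}^8 u(i) + \sum_{i \notin F_\varepsilon} \|a^{(n)}_i - a^*_i\|_\mathbb{H}^8 u(i),
\end{align*}
the tail is bounded via the triangle inequality of the previous lemma and the elementary estimate $(x+y)^8 \leq 2^7(x^8 + y^8)$ by $2^7 \cdot 2\varepsilon = 2^8 \varepsilon$, while the first sum is a finite sum of pointwise convergent terms and hence tends to zero with $n$. Thus $\limsup_n \|a^{(n)} - a^*\|_S^8 \leq 2^8 \varepsilon$ for every $\varepsilon > 0$, giving convergence in $S$.

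The only subtlety is the technical point that $\|\cdot\|_\mathbb{H}$ is not a vector-space norm but a metric, so the triangle inequality is the one proved in the preceding lemma and must be combined with the convexity bound above before summing against the weights $u(i)$; once that is in hand the argument is a direct weighted analogue of the classical Riesz--Kolmogorov criterion on $\ell^p$.
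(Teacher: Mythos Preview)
Your proof is correct and follows essentially the same route as the paper's: the paper's argument is a one-sentence sketch (``extract a Cauchy subsequence by diagonalisation on the first finitely many coordinates, using boundedness there, and control the tail uniformly via the hypothesis''), and you have simply written this out in full, additionally identifying the limit explicitly via Fatou rather than stopping at the Cauchy property. No substantive difference.
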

\begin{proof}
We show that from any sequence $\lbrace a^n \rbrace$ one can extract a Cauchy
sequence. By assumptions for a given $\epsilon > 0$ we find $n_0$, so we control the rest
of the sequence, and on the first $n_0 - 1$ coordinates simply choose a Cauchy
sequence step by step, which is possible by the boundedness assumption.
\end{proof}
\subsection{Moments estimates and tightness of approximations}
Let $\Lambda_n, |\Lambda_n| = N < + \infty$ be the exhausting sequence of $\mathbb{Z}^d$, i. e. $\Lambda_{n+1} \supseteq \Lambda_n$, $\bigcup_n \Lambda_n = \mathbb{Z}^d.$ 
We wish to construct martingale solution for the operator \begin{align} \label{infdimopera}
L = \sum_{i \in \mathbb{Z}^d} \mathcal{L}_{\lambda_i} + q_{x_i} X_i + q_{y_i} Y_i \ .
\end{align}
Suppose we have maximum metric on $\mathbb{Z}^d$ and we assume there exists constant $r > 0$ such that $q_{\cdot_i}$ depends only on neighbours within distance $r$. More precisely we assume about interaction functions $q$'s \textbf{(H1)} : \begin{itemize}
\item $ q_{\cdot_i} \in C^\infty((\mathbb{R}^3)^{\Pi_i}, \mathbb{R}), \ \text{where} \ \Pi_i = \lbrace j \in \mathbb{Z}^d : |j - i|_{\max} \leq r \rbrace$
\item $\exists C > 0 \ : \ \|q_{\cdot_i} \|< C.$ \end{itemize} 
About constants $\lambda_i$ we assume \textbf{(H3)} : 
\begin{itemize}
\item $\inf_{i \in \mathbb{Z}^d} \lambda_i > 0, \ \sup_{i \in \mathbb{Z}^d} \lambda_i < +\infty$.
\end{itemize}
On each space $(\mathbb{R}^3)^N$ we consider diffusion $A^n$ with generator that coincides on $C^2_c((\mathbb{R}^3)^{\Lambda_n})$ functions with \begin{align} \label{aproxopera}
L_n = \sum_{i \in \Lambda_n} L_{\lambda_i} + q_{x_i}^n X_i + q_{y_i}^n Y_i .
\end{align}
The interaction functions $q_{\cdot_i}$ in general depend on $n$, but in case point $i \in \mathbb{Z}^d$ has all neighbours in distance $r$, we put $q_{\cdot_i}^n = q_{\cdot_i}$, otherwise the functions have to be redefined, but we keep their smoothness and boundedness by $C$, so that they obey \textbf{(H1)}. \\
Set $\tilde{A}^n = (A^n, 0_{ i \in \mathbb{Z}^d \setminus \Lambda_n })$, then each $\tilde{A}^n(t)$ has values in $S$ and therefore $\tilde{A}^n$ lives in $\Omega = C([0, \infty), S)$.
\begin{lem} \label{momentoveodhady}
Let $a \in S$. For $n \in \mathbb{N}$ define $A^n$ as above with initial condition  $A^n(0) = \pi_{\Lambda_n} (a)$ and subsequently define $\tilde{A}^n$. Assume \textbf{(H1), (H3), (H4)}.  Let $T > 0$ be given, then there exist constants $C(T) > 0$  \begin{align} \label{spojitostodhad}
& \sup_n \ \forall_{0 \leq s \leq t \leq T} \ E \|\tilde{A}^n(t) - \tilde{A}^n(s)\|_S^8 \leq C(T) |t - s|^2 \\
\label{kompaktodhad} & \forall \delta >0 \ \forall t \geq 0 \ \exists N_0(t, \delta) \ : \  \sup_n  \ E\sum_{i = N_0(t) + 1}^\infty \|\tilde{A}^n(t)\|^8 u(i)  < \delta. 
\end{align}
\end{lem}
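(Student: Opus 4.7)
The plan is to apply the finite-dimensional Lyapunov inequality (\ref{obecnyLjapunov}) of Theorem \ref{konecnedimvysledky}, taking the auxiliary weight $v(i)$ appearing there to equal the ambient weight $u(i)$; this is legitimate because by \textbf{(H4)} one has $\sum_i u(i) < \infty$. A standard stopping-time localization of It\^o's formula applied to $W_n^2(A^n)$ combined with Gronwall then yields
\begin{align*}
\sup_n\,\sup_{t \geq 0}\, E\, W_n^2(A^n(t)) \;\leq\; W_n^2(\pi_{\Lambda_n} a) + \frac{C_2}{c_2} \;\leq\; 1 + \|a\|_S^8 + \frac{C_2}{c_2} \;=:\; M,
\end{align*}
and in particular $\sup_n\sup_{t\geq 0} \sum_i u(i) E\|A^n_i(t)\|_{\mathbb{H}}^8 \leq M$. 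This uniform eighth-moment bound is the sole input I will need from the finite-dimensional theory.

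For (\ref{spojitostodhad}) I would start from the elementary pointwise inequality
\begin{align*}
\|a\|_{\mathbb{H}}^8 \;=\; \bigl((a_x^2 + a_y^2)^2 + a_z^2\bigr)^2 \;\lesssim\; a_x^8 + a_y^8 + a_z^4,
\end{align*}
which reduces the job to estimating $E|\xi_{i,x}|^8$, $E|\xi_{i,y}|^8$, and $E|\xi_{i,z}|^4$ for the coordinate-wise increments $\xi_i = \tilde{A}^n_i(t) - \tilde{A}^n_i(s)$. Writing each coordinate from the SDE of Theorem \ref{konecnedimvysledky} and splitting every increment into a drift integral (handled by Jensen) and a stochastic integral (handled by Burkholder--Davis--Gundy), one obtains a bound of the form
\begin{align*}
E|\xi_{i,\cdot}|^{p} \;\leq\; C(T)|t-s|^{2} + C(T)|t-s|^{p-1}\!\int_s^t \bigl(1 + E\|A^n_i(u)\|_{\mathbb{H}}^{8}\bigr)\,du,
\end{align*}
with $p=8$ in the $x,y$ components and $p=4$ in the $z$ component. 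Higher powers of $|t-s|$ are absorbed into $|t-s|^2$ via $|t-s|\leq T$. The $z$-component, the most delicate because of the multiplicative noise $(-A^n_{i,y},A^n_{i,x})/\sqrt{2}$, is deliberately treated only in fourth-moment form so that the BDG integrand $|\sigma|^4 \lesssim (A^n_{i,x})^4 + (A^n_{i,y})^4 \leq 1 + \|A^n_i\|_{\mathbb{H}}^8$ falls within the reach of the Lyapunov control. Multiplying by $u(i)$, summing in $i$, and exchanging $\sum_i$ with $\int_s^t$ by Fubini gives $C(T)(1+M)|t-s|^2$, which is (\ref{spojitostodhad}).

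For (\ref{kompaktodhad}) I would repeat the same Lyapunov argument with a truncated weight. For $N_0 \in \mathbb{N}$, define
\begin{align*}
f_{N_0}(x) \;=\; \sum_{i \in \Lambda_n,\, i \notin \Pi_{N_0}} u(i)\, \|x_i\|_{\mathbb{H}}^8.
\end{align*}
Because $L_n$ acts site-wise on a function of a single $x_i$, and Lemma \ref{VjeLjapunov} together with \textbf{(H1)} and \textbf{(H3)} yields $\mathcal{L}_i V_i^2 \leq -c_2 \|x_i\|_{\mathbb{H}}^8 + C_2$ with constants independent of $i$, one gets
\begin{align*}
L_n f_{N_0}(x) \;\leq\; -c_2 f_{N_0}(x) + C_2 \sum_{i \notin \Pi_{N_0}} u(i).
\end{align*}
Gronwall then gives $E f_{N_0}(A^n(t)) \leq f_{N_0}(\pi_{\Lambda_n} a) + (C_2/c_2)\sum_{i \notin \Pi_{N_0}} u(i)$; the first summand is dominated by the tail $\sum_{i \notin \Pi_{N_0}} u(i)\|a_i\|_{\mathbb{H}}^8$ which vanishes as $N_0 \to \infty$ because $a \in S$, while the second vanishes by \textbf{(H4)}, uniformly in $n$.

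The main obstacle I anticipate lies in the bookkeeping of the first estimate: the multiplicative diffusion in the $z$-coordinate forces the asymmetric split $\|\cdot\|_{\mathbb{H}}^8 \lesssim (\cdot_x)^8 + (\cdot_y)^8 + (\cdot_z)^4$ and a careful matching of which moments are integrated and which are supped. Conceptually the crux is the choice $v = u$ in the Lyapunov inequality, which turns the finite-dimensional bound into an $n$-independent bound already in the natural norm of $S$; after that the rest is calculation.
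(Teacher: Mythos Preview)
Your argument is correct, but it differs from the paper's in where the moment control comes from. The paper never invokes the Lyapunov inequality (\ref{obecnyLjapunov}) here; instead it uses only the linear-growth bounds (\ref{usualpp}) and runs a direct Gr\"onwall argument \emph{site by site}, arriving at the per-site estimate $E\|A^n_i(u)\|_{\mathbb{H}}^8 \leq K_1(T)(1+\|a_i\|_{\mathbb{H}}^8)$ (equation (\ref{krucialniodhad})). This localised bound is then multiplied by $u(i)$ and summed to get both (\ref{spojitostodhad}) and (\ref{kompaktodhad}). Your route is global: you take $v=u$ in (\ref{obecnyLjapunov}), apply It\^o plus Gr\"onwall once to $W_n^2$, and obtain the weighted-sum bound $\sum_i u(i)E\|A^n_i(t)\|_{\mathbb{H}}^8\leq M$ uniformly in $t\geq 0$, which you then feed into the same increment computation. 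For the tail estimate you repeat the trick with a truncated weight, which is neater than the paper's use of the per-site bound. What you gain is a time-uniform constant $M$ (the paper's $K_1(T)$ blows up with $T$ through Gr\"onwall) and a cleaner reuse of the finite-dimensional machinery already established; what the paper's approach buys is a genuinely pointwise-in-$i$ estimate, independent of the weights $u$, which is slightly stronger information even if not needed here. The increment calculation itself---the asymmetric split $\|\cdot\|_{\mathbb{H}}^8\lesssim(\cdot_x)^8+(\cdot_y)^8+(\cdot_z)^4$, BDG on the stochastic part, Jensen on the drift---is identical in both.
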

\begin{proof}
First notice that the assumptions lead to the existence of constant $K$ such that ($b^n, \sigma^n$ being the coefficients of SDE for $A^n$) \begin{equation} \begin{aligned} \label{usualpp} 
& |b_{i, x}^n(a)| \vee \|\sigma_{i,x}^n(a)\|_{\mathbb{R}^{2N}} \leq K(1 + |a_{i, x}|) \\
& |b_{i, y}^n(a)| \vee \|\sigma_{i,y}^n(a)\|_{\mathbb{R}^{2N}} \leq K(1 + |a_{i, y}|) \\
& |b_{i, z}^n(a)| \vee \|\sigma_{i,z}^n(a)\|_{\mathbb{R}^{2N}} \leq K(1 + \sum_{j = 1}^3 |a_{i, j}|).
\end{aligned} \end{equation}
Suppose $0 < s < t \leq T$, we have (remind $|\Lambda_n| = N$)  $$E\| \tilde{A}^n(t) - \tilde{A}^n(s)\|_S^8 = E \sum_{i = 1}^N \|A^n_i(t) - A^n_i (s)\|_{\mathbb{H}}^8 u(i)$$ $$= \sum_{i = 1}^N E (((A^n_{i,x}(t) - A^n_{i,x}(s))^2 + (A^n_{i,y}(t) - A^n_{i,y}(s))^2)^2 + (A^n_{i,z}(t) - A^n_{i,z}(s))^2)^2 u(i)$$
\begin{align} \label{predupravou} \lesssim \sum_{i=1}^N u(i) \Big( E(A^n_{i,x}(t) - A^n_{i,x}(s))^8 + E (A^n_{i,y}(t) - A^n_{i,y}(s))^8 + E (A^n_{i,z}(t) - A^n_{i,z}(s))^4 \Big).\end{align}
The $x$ term is now estimated using (\ref{usualpp}), Burkholder - Davis - Gundy and H\"{o}lder inequalities $$ E(A^n_{i,x}(t) - A^n_{i,x}(s))^8 = E\left(\int_s^t b_{i,x}^n (A^n(u)) du + \int_s^t \sigma_{i, x}^n (A^n(u)) dW(u)\right)^8$$ $$\lesssim |t-s|^7 E(\int_s^t |b^n_{i,x}(A^n(u))|^8 du) + |t-s|^3E(\int_s^t \|\sigma^n_{i,x}(A^n(u))\|^8 du)$$
$$\lesssim |t-s|^2 + |t-s| \int_s^t E |A^n_{i,x}(u)|^8 du  .$$
Similarly handling the $y$ and $z$ we get $$E(A^n_{i,y}(t) - A^n_{i,y}(s))^8 \lesssim |t-s|^2 + |t-s| \int_s^t E |A^n_{i,y}(u)|^8 du) $$ $$E(A^n_{i,z}(t) - A^n_{i,z}(s))^4 \lesssim |t-s|^2  + |t-s| \int_s^t \sum_{j=1} ^3E |A^n_{i,j}(u)|^4 du .$$ Individual terms we treat $$E |A^n_{i,x}(u)|^8 = E\left||a_{i,x}| + \int_0^u b^n_{i,x}(A^n(v))dv + \int_0^u \sigma_{i, x}^n(A^n(v)) dW(v)\right|^8$$ $$\lesssim |a_{i, x}|^8 + 1 + \int_0^u E|A^n_{i, x}(v)|^8 dv, $$ analogically one gets $$E |A^n_{i,y}(u)|^8 \lesssim |a_{i, y}|^8 + 1 + \int_0^u E|A^n_{i, y}(v)|^8 dv $$ $$E |A^n_{i,z}(u)|^4 \lesssim |a_{i, z}|^4 + 1 + \int_0^u \sum_{j=1}^3 E|A^n_{i, j}(v)|^4 dv.$$ Altogether we derived existence of some constant $K(T) > 0$ such that $$E|A_{i, x}^n(u)|^8 + E|A_{i, y}^n(u)|^8 + \sum_{j=1}^3 E|A_{i, j}^n(u)|^4 \leq K(T)(\|a_i\|_{\mathbb{H}}^8 + 1) $$ 
$$ + K(T) \int_0^u\Big( E|A_{i, x}^n(u)|^8 + E|A_{i, y}^n(u)|^8 + \sum_{j=1}^3 E|A_{i, j}^n(u)|^4 \Big) du.$$ Invoking the Gr\"{o}nwall's inequality we can deduce existence of some constant $K_1(T) > $ such that $\forall u \in [s, t]$ \begin{align} \label{krucialniodhad} E|A_{i, x}^n(u)|^8 + E|A_{i, y}^n(u)|^8 + \sum_{j=1}^3 E|A_{i, j}^n(u)|^4 \leq K_1(T)(1 + \|a_i\|_{\mathbb{H}}^8).  \end{align}
Hence $$E(A^n_{i,x}(t) - A^n_{i,x}(s))^8 + E (A^n_{i,y}(t) - A^n_{i,y}(s))^8 + E (A^n_{i,z}(t) - A^n_{i,z}(s))^4 $$ $$\lesssim |t-s|^2 + |t-s|^2K_1(T)(1 + \|a_i\|_{\mathbb{H}}).$$ Installing back to (\ref{predupravou}) we obtain thanks to \textbf{(A2)} and the fact that $a \in E$ the existence of some constants $L(T), C(T) > 0$ such that $$E\|\tilde{A}^n(t) - \tilde{A}^n(s)\|_S^8 \leq \sum_{i = 1}^N u(i)|t-s|^2 L(T)(1 + \|a_i\|_{\mathbb{H}}^8) $$ $$\leq C(T) |t-s|^2, $$ which we wanted to prove (\ref{spojitostodhad}). \\
To prove (\ref{kompaktodhad}) we simply utilize the key estimate (\ref{krucialniodhad}), which gives us $$E \sum_{N_0(t) + 1}^\infty \|A^n_i(t)\|_{\mathbb{H}}^8 u(i) \lesssim \sum_{N_0(t) + 1}^\infty u(i) K_1(t)(1 + \|a_i\|_{\mathbb{H}}^8), $$ 
therefore for given $\delta > 0$ it suffices to choose $N_0(t)$ such that the sum $\sum_{i = N_0(t) + 1}^\infty u(i)(1 + \|a_i\|_\mathbb{H}^8)$ is sufficiently small. 
\end{proof}
\begin{cor} \label{tesnostaproximace} Let $\tilde{A}^n$ as in Lemma \ref{momentoveodhady}. Then $P \circ (\tilde{A}^n)^{-1}, n \geq 1$ is tight sequence of measures in $\Omega$.
\end{cor}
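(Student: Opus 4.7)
The plan is to verify tightness directly: for each $\varepsilon > 0$, I will exhibit a compact $K_\varepsilon \subset \Omega$ with $\inf_n P(\tilde{A}^n \in K_\varepsilon) \ge 1 - \varepsilon$. Since the topology on $C([0,\infty),S)$ is the projective limit of the topologies on $C([0,T],S)$ as $T \to \infty$, a standard diagonal argument reduces the problem to tightness in $C([0,T],S)$ for each fixed $T > 0$. For each such $T$ I will invoke Arzel\`a--Ascoli (Theorem \ref{A-A}), verifying on one and the same large-probability event both pointwise precompactness on a dense set of times and uniform equicontinuity.

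For the pointwise part, enumerate a countable dense set $D = \{t_k\} \subset [0,T]$. By (\ref{krucialniodhad}) and hypothesis \textbf{(H4)}, $\sup_n E\|\tilde{A}^n(t_k)\|_S^8 < \infty$, so Markov's inequality supplies radii $R_k$ with $\sup_n P(\|\tilde{A}^n(t_k)\|_S > R_k) < \varepsilon\,2^{-k-2}$. For each pair $(k,m)$, apply (\ref{kompaktodhad}) and Markov again to obtain integers $N_{k,m}$ with
\[
\sup_n P\Bigl(\sum_{i \ge N_{k,m}} \|\tilde{A}^n_i(t_k)\|_{\mathbb{H}}^8 u(i) > 2^{-m}\Bigr) < \varepsilon\, 2^{-k-m-3}.
\]
Setting
\[
\widetilde{K}_k = \Bigl\{a \in S : \|a\|_S \le R_k \ \text{and}\ \sum_{i \ge N_{k,m}}\|a_i\|_{\mathbb{H}}^8 u(i) \le 2^{-m}\ \text{for all } m \ge 1 \Bigr\},
\]
Lemma \ref{kompaktnostvE} yields that $\widetilde{K}_k$ is precompact in $S$, and a union bound over $k$ and $m$ shows that the event $E_1 = \bigcap_k \{\tilde{A}^n(t_k) \in \widetilde{K}_k\}$ satisfies $\inf_n P(E_1) \ge 1 - \varepsilon/2$.

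For the equicontinuity part, the moment bound (\ref{spojitostodhad}) reads $E\|\tilde{A}^n(t) - \tilde{A}^n(s)\|_S^8 \le C(T)|t-s|^2$ uniformly in $n$. The Kolmogorov--Chentsov chaining argument underlying Theorem \ref{stejnspoj} in fact produces a quantitative conclusion: for every H\"older exponent $\gamma \in (0,1/8)$ there exist random constants $M_n$ with $\sup_n E M_n < \infty$ such that $\|\tilde{A}^n(t) - \tilde{A}^n(s)\|_S \le M_n |t-s|^\gamma$ for all $s,t \in [0,T]$. Markov's inequality then gives $M^\ast = M^\ast(\varepsilon,T)$ with $\sup_n P(M_n > M^\ast) < \varepsilon/2$; on the event $E_2 = \{M_n \le M^\ast\}$ all paths share the common modulus $\omega(h) = M^\ast h^\gamma$.

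On $E_1 \cap E_2$ the realised paths form an equicontinuous family that is pointwise precompact on $D$, so by Theorem \ref{A-A} the closure $K_\varepsilon$ of this deterministic family of paths is compact in $C([0,T],S)$, and by construction $\inf_n P(\tilde{A}^n \in K_\varepsilon) \ge 1 - \varepsilon$. The only non-routine step is transferring the classical Kolmogorov--Chentsov chaining argument from $\mathbb{R}^n$ to the abstract complete separable metric space $(S,\|\cdot\|_S)$, but since the chaining uses only the triangle inequality and completeness of the state space it goes through without modification, which is precisely the generality in which Theorem \ref{stejnspoj} was stated.
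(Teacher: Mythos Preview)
Your proof is correct and follows essentially the same route as the paper: both arguments combine the Kolmogorov-type moment bound (\ref{spojitostodhad}) for equicontinuity with the tail estimate (\ref{kompaktodhad}) and Lemma~\ref{kompaktnostvE} for pointwise precompactness on a countable dense set, then invoke Arzel\`a--Ascoli. Your version is more explicit---you construct the compact $K_\varepsilon$ directly and unpack the quantitative H\"older bound hidden inside Theorem~\ref{stejnspoj}---whereas the paper quotes Theorem~\ref{stejnspoj} as a black box and leaves the countable union bound as a ``standard argument'', but the underlying strategy is identical.
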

\begin{proof}
The estimate (\ref{spojitostodhad}) implies according to Theorem \ref{stejnspoj} that equicontinuity condition is satisfied. Since boundedness is immediately implied by equicontinuity and boundedness at zero, to prove precompactness on a dense subset it remains to show by Lemma \ref{kompaktnostvE} that for given $\epsilon > 0$ \begin{align} \label{bodovakompaktnost}
P\left(\forall_{t \in \mathbb{Q} \cap (0, \infty)} \ \forall_{\delta \in (0, \infty \cap \mathbb{Q}} \ \exists_{N_0(t, \delta)} : \sum_{i = N_0(t, \delta) + 1}^\infty \|A^n_i(t)\|_\mathbb{H}^8 u(i) < \delta\right) > 1 - \epsilon.
\end{align} For any $\epsilon > 0$ and given fixed $t$ and $\delta$ application of Chebyshev inequality in conjunction with the estimate (\ref{kompaktodhad}) yields \begin{align*}
& P \left(\sum_{i = N_0(t, \delta) + 1}^\infty \|A^n_i(t)\|_\mathbb{H}^8 u(i) < \delta\right) = 1 - P \left(\sum_{i = N_0(t, \delta) + 1}^\infty \|A^n_i(t)\|_\mathbb{H}^8 u(i) \geq \delta \right) \\ & \geq 1 - \frac{E \sum_{i = N_0(t, \delta) + 1}^\infty \|A^n_i(t)\|_\mathbb{H}^8 u(i)}{\delta} > 1 -\epsilon.
\end{align*} Considering we have only countably many $\delta$'s and $t$'s, standard argument shows that (\ref{bodovakompaktnost}) is indeed fulfilled. 
\end{proof} 
\subsection{Solution to the Martingale problem} \label{Mart_problem_sekce}
Now we show that weak limit of sequence $\lbrace P \circ (\tilde{A}^n)^{-1} \rbrace$ can be used to construct martingale solution to the operator (\ref{infdimopera}).    \\
We let $A_t(w) = w(t)$, $w \in \Omega$ be the canonical process on $\Omega = C([0, \infty), S)$ with $\sigma$-algebra $\mathcal{F} = \sigma(w(s), s \geq 0)$, $\mathcal{F}_t = \sigma(w(s), 0 \leq s \leq t)$ denotes the usual filtration. We further introduce spaces $\Omega_n = C([0, \infty), (\mathbb{R}^3)^{\Lambda_n})$, $B^n_t(\omega_n) = \omega_n(t)$ the canonical process on $\Omega_n$ and the mappings \begin{align*} &\chi_n : (\mathbb{R}^3)^{\Lambda_n} \rightarrow S, \ i_n(a_1, \ldots, a_N) = (a_1, \ldots, a_N, 0_{i \in \mathbb{Z}^d \setminus \Lambda_n}) \\
& \psi_n : \Omega_n \rightarrow \Omega, \ \omega_n \rightarrow [t \rightarrow (\omega_n(t), 0_{i \in \mathbb{Z}^d \setminus \Lambda_n})]. \end{align*}
For given $a \in S$ we denote $A^{n, a}$ and $\tilde{A}^{n, a}$ the processes constructed in previous section to accentuate their dependence on $a$, i. e. $A^{n,a}$ is the solution to SDE with generator extending the (\ref{aproxopera}), $A^n(0) = \pi_{\Lambda_n} (a)$ and $\tilde{A}^n = (A^n, 0_{ i \in \mathbb{Z}^d \setminus \Lambda_n })$ . In addition we denote by $P^a$ the weak limit of measures $P \circ (\tilde{A}^{n,a})^{-1}$, that we just proved in Corollary \ref{tesnostaproximace} to exist. To simplify the notation we denote $\tilde{P}^a_n = P \circ (\tilde{A}^{n,a})^{-1}$ and $P^a_n = P \circ (A^{n,a})^{-1}$, the matching expectations will then be denoted $E^a, \tilde{E}^a_n,$ respectively $E^a_n$ . Notice that $\tilde{P}^a_n = P^a_n \circ \psi_n^{-1}$, as following calculation reveals : for $C \in \mathcal{F}$ $$\tilde{P}^a_n(C) = P(\tilde{A}^{n, a} (\cdot) \in C) = P((A^{n, a}, 0)(\cdot) \in C) = P(\psi_n(A^{n,a}) \in C)$$ $$= P^a_n \circ \psi_n^{-1}(C).$$
We introduce two family of functions. We say that $f \in C^{2, Cyl}_c(S)$, if there exists $\Phi_f \subset\subset \mathbb{Z}^d$ such that there is  $g \in C^2_c((\mathbb{R}^3)^{\Phi_f}, \mathbb{R})$ ($c$ stands for compactly supported) and $f(a) = g(\pi_{\Phi_f}(a))$, analogically $f \in C^{2, Cyl}(S)$, if such $g \in C^2((\mathbb{R}^3)^{\Phi_f}, \mathbb{R})$. With this notation we arrive at the following theorem.
\begin{thm}[Existence of solution to the martingale problem] \label{existence reseni mart problemu}
Let $a \in S$. Then there exists measure probability measure $P^a$ on $\Omega$ such that : \begin{align} \label{hmotavnule}
& P(A_0 = a) = 1 \\
& f(A_t) - f(A_0) - \int_s^t Lf(A_u) du \label{martingalovarovnost}
\end{align}
is $\mathcal{F}_t$-martingale under $P^a$ for any $f \in C^{2, Cyl}_c(S)$ and $\mathcal{F}_t$-local martingale under $P^a$ for any $f \in C^{2, Cyl}(S).$
\end{thm}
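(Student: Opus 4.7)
The plan is to define $P^a$ as a weak subsequential limit of the sequence $\tilde{P}^a_n = P \circ (\tilde{A}^{n,a})^{-1}$; its tightness in $\Omega$ is already provided by Corollary \ref{tesnostaproximace}. It then remains to verify the initial condition, the martingale identity for cylindrical compactly supported test functions, and the local martingale extension.

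The initial condition is immediate: since $\tilde{A}^{n,a}(0) = (\pi_{\Lambda_n}(a), 0_{i \in \mathbb{Z}^d \setminus \Lambda_n})$ satisfies $\|a - \tilde{A}^{n,a}(0)\|_S^8 = \sum_{i \notin \Lambda_n} \|a_i\|_\mathbb{H}^8 u(i) \to 0$ by $a \in S$, and the evaluation map $\omega \mapsto \omega_0$ is continuous from $\Omega$ to $S$, the time-$0$ marginal of $P^a$ must be $\delta_a$.

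For $f \in C^{2,Cyl}_c(S)$ write $f = g \circ \pi_{\Phi_f}$ with $g \in C^2_c((\mathbb{R}^3)^{\Phi_f})$. The finite-range hypothesis on the interactions ensures that, once $n$ is large enough that $\bigcup_{i \in \Phi_f} \Pi_i \subset \Lambda_n$, one has $q^n_{\cdot_i} = q_{\cdot_i}$ for every $i \in \Phi_f$, hence $L_n f = L f$ on the image of $\psi_n$. Applying It\^o's formula to the diffusion $A^{n,a}$ with the bounded $C^2$ cylindrical function $a \mapsto g(\pi_{\Phi_f} a)$ shows that $M^{n,f}_t = f(\tilde{A}^{n,a}_t) - f(\tilde{A}^{n,a}_0) - \int_0^t L f(\tilde{A}^{n,a}_u)\, du$ is a genuine martingale. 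Fixing $0 \leq s \leq t$ and a bounded continuous $\mathcal{F}_s$-measurable $\Psi : \Omega \to \mathbb{R}$, the key is to show that
$$\omega \mapsto \Psi(\omega) \bigl( f(\omega_t) - f(\omega_s) - \int_s^t L f(\omega_u)\, du \bigr)$$
is bounded and continuous on $\Omega$. Indeed, $L f$ depends only on the finitely many coordinates in $\bigcup_{i \in \Phi_f} \Pi_i$; each coordinate projection $S \to \mathbb{R}^3$ is continuous; the factors $x_i, y_i$ hidden inside $X_i, Y_i, D_i$ remain in the compact support of $g$; and the interaction functions $q_{\cdot_i}$ are bounded by \textbf{(H1)}. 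Thus $L f$ is bounded continuous on $S$ and the time integral becomes bounded continuous on $\Omega$ by dominated convergence. Passing to the limit along the weakly convergent subsequence transfers the martingale identity to $P^a$.

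For $f \in C^{2,Cyl}(S)$ I would localize. Writing $f = g \circ \pi_{\Phi_f}$, pick cutoffs $\chi_k \in C^\infty_c((\mathbb{R}^3)^{\Phi_f})$ with $\chi_k \equiv 1$ on the ball of radius $k$ and supported in the ball of radius $2k$, and set $f_k = (\chi_k g) \circ \pi_{\Phi_f} \in C^{2,Cyl}_c(S)$. Define
$$\tau_k = \inf\bigl\{ t \geq 0 : \max_{i \in \Phi_f} \|A_{i,t}\|_\mathbb{H} \geq k \bigr\},$$
a stopping time thanks to path continuity in $S$ and the continuity of projection onto the finitely many coordinates of $\Phi_f$; since paths are continuous in $S$, $\tau_k \uparrow +\infty$ almost surely. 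On $[0, \tau_k)$ the functions $f$ and $f_k$ coincide, and so do $L f$ and $L f_k$, so the martingale attached to $f_k$, stopped at $\tau_k$, agrees with the stopped process attached to $f$, yielding the local martingale property. The main obstacle is precisely the cylindrical compactly supported step: without compact support of $g$ the unbounded coefficients $x_i, y_i$ in $X_i, Y_i, D_i$ would spoil the uniform boundedness of $L f$ on $S$, and without the finite-range assumption one could not replace $L_n f$ by $L f$ at finite $n$; both features are what allow weak convergence alone to transfer the martingale identity to the limit.
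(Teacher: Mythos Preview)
Your proposal is correct and follows essentially the same route as the paper: take $P^a$ as a weak subsequential limit of $\tilde P^a_n$ (Corollary~\ref{tesnostaproximace}), verify $A_0=a$ from the deterministic convergence $\tilde A^{n,a}(0)\to a$ in $S$, and pass the martingale identity $E\bigl[\Psi\,(f(A_t)-f(A_s)-\int_s^t Lf(A_u)\,du)\bigr]=0$ through weak convergence using that for $f\in C^{2,Cyl}_c(S)$ the functional is bounded continuous on $\Omega$ and that $L_n f=Lf$ once $\Lambda_n$ contains all neighbours of $\Phi_f$. Your explicit localisation for $f\in C^{2,Cyl}(S)$ via cutoffs $\chi_k$ and the stopping times $\tau_k$ is exactly what the paper alludes to with ``same as in finite dimension thanks to the cylindricity assumption''; just make sure the set on which $\chi_k\equiv 1$ genuinely contains $\{\pi_{\Phi_f}A_{t\wedge\tau_k}\}$ so that $f=f_k$ and $Lf=Lf_k$ hold up to and including the stopping time.
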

\begin{proof}
Define $P^a$ as above, so that we have $\tilde{P}^a_n \xrightarrow{w} P^a$. Then with the aid of Portmanteau theorem $$P(A_0 = a) = 1 - \sum_k P^a(\|A_0 - a\|_S > \frac{1}{k})$$ $$\geq 1 - \sum_k \liminf_n P(\|\tilde{A}^{n, a}(0) - a\|_S > \frac{1}{k}) = 1 - \sum_k 0, $$ thus we see that (\ref{hmotavnule}) is satisfied. Let $f \in C^{2, Cyl}_c(S)$ be given. To prove that (\ref{martingalovarovnost}) is martingale it suffices to prove by standard technique (see \cite[Lemma 3.1]{Seidler}) that for arbitrary $G \in C(C([0, s], S), [0, 1]), s < t$ \begin{align} \label{dukazmartingalu}
E^a\left[\left(f(A_t) - f(A_s) - \int_s^t Lf(A_u) du\right)G(\omega_\cdot)\right] = 0.
\end{align}
By weak convergence $\tilde{P}^a_n \xrightarrow{w} P^a$ the formula in (\ref{dukazmartingalu}) is a limit of \begin{align} \label{predprenosem} \tilde{E}^a_n\left[\left(f(A_t) - f(A_s) - \int_s^t Lf(A_u) du\right)G(\omega_\cdot)\right].
\end{align}
We compute \begin{equation} \begin{aligned} \label{komputace na prenos}
& \tilde{E}^a_n f(A_t(\omega)) = \tilde{E}_n^a f(\omega_t) = E^a_n f([\psi_n \omega_n]_t) = E^a_n (f \circ \chi_n)(B_t^n(\omega_n)) \\
& \tilde{E}^a_n G(\omega_\cdot) = E^a_n G((\psi_n \omega_n)_\cdot) = E^a_n (G \circ \psi_n)((\omega_n)_\cdot). \end{aligned} \end{equation}
Since $f$ is cylindrical the operator $L$ acting on $f$ in fact reduces to $L^f$, i. e. the operator $$L^f = \sum_{i \in \Phi_f} \mathcal{L}_{\lambda_i} + q_{x_i} X_i + q_{y_i} Y_i.$$ Consider that for $n$ large enough every point from $\Phi_f$ has all neighbours in $\Lambda_n$ and hence $L^f$ equals to  $L_n$ on $\Phi_f$, where $L_n$ is the operator corresponding to $A^n$ as defined in (\ref{aproxopera}). Then we adjust $$\tilde{E}^a_n \int_s^t L f(A_u) = E^a_n \int_s^t L^f f([\psi_n \omega_n]_u) = E^a_n \int_s^t L^f f(\chi_n (B^n_u(\omega_n))$$ $$= E^a_n \int_s^t L_n (f \circ \chi_n) (B^n_u(\omega_n)). $$ 
Altogether we found out that (\ref{predprenosem}) is equal to $$E^a_n\left[\left((f \circ \chi_n)(B_t^n) -  (f \circ \chi_n)(B_s^n) - \int_s^t L_n (f \circ \chi_n) (B^n_u) du \right)(G \circ \psi_n) ((\omega_n)_\cdot)\right], $$ but since we know that $P^a_n$ solves the martingale problem for $L_n$ on $\Omega_n$, this expression equals to zero and therefore also (\ref{dukazmartingalu}) is zero. \\
To deduce that for $f \in C^{2, Cyl}(S)$ (\ref{martingalovarovnost}) is local martingale, is the same as in finite dimension thanks to the cylindricity assumption.
\end{proof} 
\section{Uniqueness of approximating procedure}
In the previous section we only showed that our approximation scheme is tight, however now we show under additional assumptions that limit point is unique. 
To make the calculations as simple as possible (although still far from trivial) we distinguish specific approximation scheme related to the size of our interactions. Recall that $0 < r < \infty$ is the parameter of length of interactions for the functions $q$'s. We define boxes $$\Pi_n = \lbrace {i \in \mathbb{Z}^d : \ \max_{j \leq d} |i_j| \leq nr} \rbrace, N = |\Pi_n| = (2nr+1)^d.$$\\
We need to impose on the interactions additional assumption \textbf{(H2)} : \begin{itemize}
\item $\sup_{u \in (\mathbb{R}^3)^{(2r + 1)^d}} \sum_{j = 1}^{(2r+1)^d} |\frac{\partial q_{\cdot_i}}{\partial_j}(u) u_{\cdot_i} | +  |\frac{\partial q_{\cdot_i}}{\partial_j}(u)| \leq C, \ i \in \mathbb{Z}^d$
\end{itemize}
This assumption ensures that the equation for $A^n$ has globally Lipschitz drift. More precisely we need the following observation.
\begin{lem} \label{LipschitzchovaniAn}
Let $\Lambda_n \supset \Pi_{k+1}$ and we denote $b_k = (b_1, \ldots, b_K)$ (notice that this does not depend on $n$, since we assume $\Lambda_n \supset \Pi_{k+1}$) the first $K = |\Pi_k|$ coordinates of drift for the equation $$dA^n = b^n(A^n)dt + \sigma^n(A^n)dW_t,$$ also for an element $c^n \in (\mathbb{R}^3)^{\Lambda_n}$ we denote $c^n_k = (c^n_{1, x}, \ldots, c^n_{K, z})$. Then there exists constant $L > 0$ s. t. \begin{align} \label{lipschicdrift} \|b_k(a^n) - b_k(d^n)\|_{(\mathbb{R}^3)^{\Pi_k}}^2 \leq L \|a^n_{k+1} - d^n_{k+1}\|_{(\mathbb{R}^3)^{\Pi_{k+1}}}^2, \ \forall a^n, d^n \in (\mathbb{R}^3)^{\Lambda_n}.\end{align} $L$ is independent of $k, n$. 
\end{lem}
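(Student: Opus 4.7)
The plan is to exploit the finite range $r$ of the interactions together with the uniform bounds supplied by \textbf{(H1)}--\textbf{(H3)}. First I would observe that for $i \in \Pi_k$ the drift $b_i$ depends on $a^n_i$ through the diagonal linear terms $-\lambda_i a^n_i$ and on $a^n_j$ for $j$ in the interaction neighbourhood $\{j : |j-i|_{\max} \leq r\}$ through $q_{x_i}, q_{y_i}$. Since $\|i\|_{\max} \leq kr$ combined with $|j-i|_{\max} \leq r$ forces $\|j\|_{\max} \leq (k+1)r$, all such neighbours sit in $\Pi_{k+1}$, so $b_k(a^n)$ genuinely depends only on $a^n_{k+1}$, which both makes the right-hand side of (\ref{lipschicdrift}) well posed and makes the Lipschitz constant $n$-independent once it is proved.

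Next I would estimate $|b_{i,\alpha}(a^n) - b_{i,\alpha}(d^n)|$ for each $\alpha \in \{x,y,z\}$ and each $i \in \Pi_k$. The linear terms $-\lambda_i a^n_{i,\alpha}$ and $-2\lambda_i a^n_{i,z}$ are trivially Lipschitz with constant $2\sup_i \lambda_i < \infty$ by \textbf{(H3)}. The contributions $q_{x_i}(u)$ and $q_{y_i}(u)$ in $b_{i,x}$ and $b_{i,y}$ are globally Lipschitz on $(\mathbb{R}^3)^{\Pi_i}$ with a universal constant, because \textbf{(H2)} supplies a uniform bound on every $|\partial_j q_{\cdot_i}|$.

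The only non-routine ingredient is the term $q_{y_i}(u) a^n_{i,x} - q_{x_i}(u) a^n_{i,y}$ inside $b_{i,z}$, where a bounded function is multiplied by an unbounded coordinate. To control this I would write, with $\gamma(s) = d^n + s(a^n - d^n)$,
$$q_{y_i}(a^n) a^n_{i,x} - q_{y_i}(d^n) d^n_{i,x} = \int_0^1 \frac{d}{ds} \bigl[q_{y_i}(\gamma(s))\, \gamma(s)_{i,x}\bigr]\, ds,$$
and expand the integrand as
$$\sum_{j \in \Pi_i} \partial_j q_{y_i}(\gamma(s))\, \gamma(s)_{i,x}\, (a^n - d^n)_j \;+\; q_{y_i}(\gamma(s))\, (a^n_{i,x} - d^n_{i,x}).$$
Assumption \textbf{(H2)} is designed precisely to handle the first summand, as it controls $|\partial_j q_{\cdot_i}(u)\, u_{\cdot_i}|$ uniformly in $u$, while \textbf{(H1)} bounds the prefactor in the second. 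This yields $|q_{y_i}(a^n) a^n_{i,x} - q_{y_i}(d^n) d^n_{i,x}| \leq \tilde C \, \|a^n_{\Pi_i} - d^n_{\Pi_i}\|$ with $\tilde C$ depending only on $C$ and the structural constants $r,d$; the term $q_{x_i}(u) a^n_{i,y}$ is symmetric.

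Finally I would square the single-site estimates, apply Cauchy--Schwarz inside each neighbourhood of cardinality $(2r+1)^d$, and sum over $i \in \Pi_k$. Since each index $j \in \Pi_{k+1}$ enters the drift at site $i$ only when $|i-j|_{\max} \leq r$, of which there are at most $(2r+1)^d$ such $i$, swapping the order of summation gives
$$\sum_{i \in \Pi_k} \|b_i(a^n) - b_i(d^n)\|_{\mathbb{R}^3}^2 \;\leq\; L\, \|a^n_{k+1} - d^n_{k+1}\|_{(\mathbb{R}^3)^{\Pi_{k+1}}}^2$$
with $L = L(C, \sup_i \lambda_i, r, d)$ independent of both $k$ and $n$, which is (\ref{lipschicdrift}). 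The principal obstacle throughout is the bilinear structure in $b_{i,z}$: without \textbf{(H2)}, a bounded $q$ multiplied by an unbounded coordinate would fail to be globally Lipschitz, and the Lipschitz-drift machinery required in the rest of Section 4 would break down. Hypothesis \textbf{(H2)} is tailored exactly to this obstruction, after which the remaining arithmetic is routine.
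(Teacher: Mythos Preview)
Your proposal is correct and follows essentially the same route as the paper: both identify the bilinear contribution $q_{y_i}(u)\,u_{i,x}$ in $b_{i,z}$ as the only nontrivial piece, control it via the mean value theorem together with the uniform bound on $\partial_j q_{\cdot_i}(u)\,u_{\cdot_i}$ from \textbf{(H2)}, and then use the fixed neighbourhood cardinality $(2r+1)^d$ to pass from per-site estimates to the global bound with a constant independent of $k,n$. Your write-up is in fact more explicit than the paper's sketch---you spell out the integral mean value theorem, separate the two summands coming from \textbf{(H1)} and \textbf{(H2)}, and make the counting argument for the overlap precise---but the underlying idea is the same.
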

\begin{proof} The proof is elementary and follows from assumptions \textbf{(H1), (H2), (H3)}. The terms in the drift that complicate Lipschitz condition  - and force us to use $k+1$ in (\ref{lipschicdrift}) - are the ones containing $q_\cdot$'s, since they depend on all nearest $(2r+1)^d$ neighbours. As an example, how one obtains (\ref{lipschicdrift}) in these cases, we handle using the notation just introduced e. g. the term $q_y x$. Because of the finite range of our interactons  $q_{i,x} (\cdot) a_{i,x}^n$ is a smooth function of $(2r+1)^d$ variables for $i, 1 \leq i \leq K$, hence application of mean value theorem together with \textbf{(H2)} yields \begin{align*} & \left(q_{i,y} (a^n) a_{i,x}^n - q_{i,y}(d^n) d_{i,x}^n \right)^2 \leq \|\nabla q_{i,y}(\cdot)\cdot_{i,x}\|_{\infty} \|a^n_{r \supset i} - d^n_{r \supset i}\|_{(\mathbb{R}^3)^{(2r+1)^d}} \\ & \leq C \|a^n_{r \supset i} - d^n_{r \supset i}\|_{(\mathbb{R}^3)^{(2r+1)^d}},
\end{align*} where we denoted $r \supset i = \lbrace j \in \mathbb{Z}^d : |j - i|_{\max} \leq r \rbrace$. \\ We then take into account that every point $i \in \mathbb{Z}^d$ has the same finite fixed amount of neighbours. Hence handling the other terms in the obvious way, we indeed arrive at the existence of some $L> 0$ such that $$\|b_k(a^n) - b_k(d^n)\|_{(\mathbb{R}^3)^{\Pi_k}}^2 \leq L \|a^n_{k+1} - d^n_{k+1}\|_{(\mathbb{R}^3)^{\Pi_{k+1}}}^2, \ \forall a^n, d^n \in (\mathbb{R}^3)^{\Lambda_n}.$$
\end{proof}
\noindent In addition we need to restrict our class of starting points $a \in S$, so that the space includes only configurations that does not grow too fast, i. e. we introduce \textbf{(H6)} : \begin{itemize} 
\item $\exists \ \delta \in (0, 1)$ $\exists K > 0$ s. t. $$u(j) \geq \frac{K}{i!^{1 - \delta}} \  \ j \in \Pi_i \setminus \Pi_{i-1}, \ i \in \mathbb{N}.$$
\end{itemize}
Comparing this assumption with the restrictions on weights that Da Prato and Zabczyk need to impose \cite[pp. 10]{Zabczyk}, we see that our conditions include faster growing configurations. \\
The key to proofs in this section are two technical Lemmas about behaviour of solutions $A^n$ to the SDE's related to the operator $L_n$. If we take some fixed given set $\Gamma \subset \mathbb{Z}^d$ and two supersets $\Gamma_n, \Gamma_k \supset \Gamma$, such that we have corresponding solutions $A^n, A^k$ of SDE's on $(\mathbb{R}^3)^{\Gamma_n}$ resp. $(\mathbb{R}^3)^{\Gamma_k}$, then we cannot claim that $(A^n_i)_{i \in \Gamma}$ and $(A^k_i)_{\i \in \Gamma}$ have the same distribution, because we have to redefine the interaction functions at the boundary of the sets $\Gamma_n, \Gamma_k$, and hence $(A^n_i)_{i \in \Gamma}$ and $(A^k_i)_{i \in \Gamma}$ differ as they depend on all $A^n$ resp. $A^k$ via interactions. Therefore we can never have precise equality, even though the part of equations on $(\mathbb{R}^3)^\Gamma$ will have the same coefficients, once both $\Gamma_n$ and $\Gamma_k$ includes all neighbours of $\Gamma$. Nevertheless one intuitively would expect, that the further we are from boundary, the smaller the effect of redefining should be on $\Gamma$. Next Lemma formalizes and justifies this intuition. Then we can also interpret technical assumption \textbf{(H6)} by saying, that the effect of redefining at the boundary will be small, provided we do not start from very fast growing initial configurations. \\
In all what follows in this chapter we assume conditions \textbf{(H1) - (H4), (H6)}.
\begin{lem} \label{asymptotickarovnost} Let $a \in S$ and $\Pi_k$ be defined as above. Suppose we have two exhausting sequences $\lbrace \Lambda_l \rbrace, \lbrace \Lambda_m \rbrace$ of $\mathbb{Z}^d$ and correspondingly two sequences of processes $\lbrace A^{m,a} \rbrace, \lbrace A^{l,a} \rbrace$. We denote by $A^{m, a}_k$ the part of $A^{m, a}$ that lives on $(\mathbb{R}^3)^{\Pi_k}$, i. e. $A^{n, a}_k = (A^n_{1, x}, \ldots, A^n_{K, z})$. For any $\epsilon > 0$ and $T > 0$ there exists $N > 0$ such that for any $l, m \geq N$ \begin{align} \label{konvergencesamotneposl}
E\sup_{t \in [0, T]} \|A^{l, a}_k(t) - A^{m,a}_k(t)\|_{(\mathbb{R}^3)^{\Pi_k}}^2 \leq \epsilon.
\end{align}
\end{lem}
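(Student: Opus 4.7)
The strategy is to realize both $A^{l,a}$ and $A^{m,a}$ on a common probability space driven by a single Brownian motion (restricted appropriately to each index set), and then control the squared $L^2$-difference on $\Pi_k$ by a cascading Gronwall argument that exploits the layered structure of the drift given by Lemma \ref{LipschitzchovaniAn}. I fix $l, m$ large enough that $\Lambda_l \cap \Lambda_m \supset \Pi_{k+j}$ for a parameter $j$ to be chosen below; this guarantees that for every $0 \leq i \leq j - 1$ the drift restricted to $\Pi_{k+i}$ uses the unmodified interactions for both processes, so the functions $b_{k+i}$ in the two equations coincide and depend only on the state in $\Pi_{k+i+1}$.

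Setting $D_i(t) = A^{l,a}_i(t) - A^{m,a}_i(t)$ and $g_i(T) = E\sup_{t \leq T} \|D_i(t)\|^2$, I apply It\^o's formula to $\|D_i(t)\|^2$. The drift contribution is handled by Cauchy--Schwarz together with Lemma \ref{LipschitzchovaniAn}, yielding a bound of the form $\|D_i\|^2 + L\|D_{i+1}\|^2$. Crucially, the diffusion matrix $\sigma$ is block-diagonal with each block $M_j$ depending only on the coordinate $j$, so its Lipschitz contribution produces only $C\|D_i\|^2$. Combining with BDG for the stochastic integral and integrating leads to
\begin{equation*}
g_i(T) \leq C_T \int_0^T \bigl( g_i(s) + g_{i+1}(s) \bigr)\,ds,
\end{equation*}
and a single application of Gronwall in $g_i$ gives $g_i(T) \leq B(T) \int_0^T g_{i+1}(s)\,ds$ with $B(T)$ independent of $i, l, m$ (the uniformity is the content of Lemma \ref{LipschitzchovaniAn} and the locality of $\sigma$).

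Iterating this bound $j$ times — valid because at each level the drifts agree by our choice of $l, m$ — an easy induction produces
\begin{equation*}
g_k(T) \leq \frac{(B(T)T)^j}{j!}\, g_{k+j}(T).
\end{equation*}
For the terminal factor I use the moment estimate (\ref{krucialniodhad}) from Lemma \ref{momentoveodhady} to obtain $g_{k+j}(T) \leq C(T) \sum_{i \in \Pi_{k+j}} (1 + \|a_i\|^2_\mathbb{H})$. Condition \textbf{(H6)} gives $\|a_i\|^8_\mathbb{H} \leq \|a\|_S^8/u(i) \leq C\, m!^{1-\delta}$ for $i$ in the $m$-th shell $\Pi_m \setminus \Pi_{m-1}$, hence $\|a_i\|_\mathbb{H}^2 \leq C\, m!^{(1-\delta)/4}$; combined with the polynomial shell sizes $|\Pi_m \setminus \Pi_{m-1}| \lesssim m^{d-1}$ this leads to $g_{k+j}(T) \lesssim (k+j)^d (k+j)!^{\alpha}$ with $\alpha := (1-\delta)/4 < 1/4$.

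Using $(k+j)! \leq (k+j)^k j!$ the combined estimate becomes
\begin{equation*}
g_k(T) \lesssim \frac{(B(T)T)^j (k+j)^{d+k\alpha}}{j!^{\,1-\alpha}},
\end{equation*}
which tends to $0$ as $j \to \infty$ since $1 - \alpha > 3/4$. Given $\epsilon > 0$ and $T > 0$ I therefore first choose $j$ making the right side below $\epsilon$, and then $N$ so that $\Pi_{k+j} \subset \Lambda_l \cap \Lambda_m$ whenever $l, m \geq N$, which yields the claim. The main obstacle is the balancing act at the end: the iteration gains a factor $1/j!$ but the best crude bound on $g_{k+j}$ grows factorially like $(k+j)!^{\alpha}$, and it is precisely the combination of the eighth power in the definition of $S$ with the growth restriction $\delta \in (0,1)$ in \textbf{(H6)} that ensures $\alpha < 1$, thereby forcing the final ratio to zero.
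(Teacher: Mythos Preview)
Your argument is correct and is essentially the paper's own proof: both iterate the Lipschitz estimate of Lemma \ref{LipschitzchovaniAn} across the successive boxes $\Pi_{k+i}$ to gain a factorial factor, and then use linear growth together with \textbf{(H6)} to show that the terminal moment term grows only sub-factorially. The only differences are cosmetic --- the paper iterates the pointwise quantity $E\|A^{l,a}_{k+i}(t)-A^{m,a}_{k+i}(t)\|^2$ (picking up an extra $t_i$ at each step and hence $(2n-1)!!$ rather than your $j!$) and finishes the tail via Stolz--Ces\`aro on the eighth-power sums, whereas you bound the shells directly with exponent $\alpha=(1-\delta)/4$.
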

\begin{proof}
We will be little imprecise and write $a_k = (a_{1, x}, \ldots, a_{K, z})$ for the restriction of $a$ to $(\mathbb{R}^3)^{\Pi_k}$, in order to not overload the notation we also write $a_j = (a_{j, x}, a_{j, y}, a_{j, z})$ when $j \in \mathbb{Z}^d$. Also when dealing with the norms on spaces $\mathbb{R}^n$ for different $n$ we omit the index in the norm, as it should not lead not confusion and instead enhance readability. Using the Lemma \ref{LipschitzchovaniAn} and routine calculations for SDE's we compute \begin{align*} &E\sup_{t \in [0, T]} \|A^{l, a}_k(t) - A^{m,a}_k(t)\|^2 \\ & \lesssim  E\sup_{t \in [0, T]} \left( \left\| \int_0^t b_k(A^{l, a}) - b_k(A^{m,a}) ds \right\|^2 + \left\| \int_0^t \sigma_k(A^{l,a}) - \sigma_k(A^{m,a}) dW_s \right\|^2 \right) \\ & \lesssim T \left( E \int_0^T \|b_k(A^{l, a}) - b_k(A^{m, a})\|^2 ds + E \int_0^T \|\sigma_k(A^{l,a}) - \sigma_k (A^{m, a})\|^2 ds \right) \\ & \lesssim T\left( \int_0^T E \|A^{l, a}_{k+1}(t_1) - A^{m, a}_{k+1}(t_1)\|^2 dt_1 \right).     \end{align*}
Therefore we obtained the existence of constant $C > 0$ so that $$E\sup_{t \in [0, T]} \|A^{l, a}_k(t) - A^{m,a}_k(t)\|^2 \leq CT\int_0^TE \|A^{l, a}_{k+1}(t_1) - A^{m, a}_{k+1}(t_1)\|^2 dt_1.$$ Assuming $l, m$ large enough so we can repeat the procedure above, we get \begin{equation} \begin{aligned} \label{iterovanyodhad}
& E \|A^{l, a}_{k+1}(t_1) - A^{m,a}_{k+1}(t_1)\|^2 \leq C t_1 \int_0^{t_1} E \|A^{l, a}_{k+2}(t_2) - A^{m,a}_{k+2}(t_2)\|^2dt_2 \\
& \cdots \leq C^{n-1} t_1 \int_0^{t_1} t_2 \int_0^{t_2} \cdots \int_0^{t_{n-1}} E \|A^{l, a}_{k+n}(t_{n}) - A^{m,a}_{k+n}(t_{n})\|^2 dt_n \ldots dt_1. 
\end{aligned} \end{equation}
Thanks to the Linear growth of coefficients of our SDE (\ref{usualpp}), there is some $K_T > 0$ such that $$E \|A^{l, a}_{k+n}(t_{n}) - A^{m,a}_{k+n}(t_{n})\|^2 \leq K_T(1 + \|a_{n+k}\|^2).$$
Using this and then calculating the iterated integrals, we obtain from (\ref{iterovanyodhad}) the estimate $$E\sup_{t \in [0, T]} \|A^{l, a}_k(t) - A^{m,a}_k(t)\|^2 \leq \frac{(CT^2)^n}{(2n-1)!!} K_T(1 + \|a_{n+k}\|^2),$$
where $(2n-1)!! = (2n-1) \cdot (2n-3) \cdots 3\cdot 1$ denotes the odd (double) factorial. Using the obvious $$\|a_{n+k}\|^2_{(\mathbb{R}^3)^{\Pi_{n+k}}} \leq \sum_{j = 1}^{(2(n+k)r + 1)^d} 3 + \|a_j\|^8_\mathbb{H},$$ we need to prove only $$\lim_{n \rightarrow \infty} \frac{L^n}{n!}\sum_{j = 1}^{(2(n+k)r + 1)^d} (1 + \|a_j\|^8_\mathbb{H}) = 0 $$ for arbitrary constant $L > 0$. Clearly it suffices to show \begin{align} \label{chcemespocitat} \lim_n \frac{\sum_{j = 1}^{(2(n+k)r + 1)^d} \|a_j\|^8_\mathbb{H}}{n!^{1 - \frac{\delta}{2}}} = 0, \end{align} where $\delta$ is from the assumption \textbf{(H5)}. We compute using the \textbf{(H5)} and trivial $\|a_j\|_\mathbb{H}^8 u(j) \leq \|a\|_S^8$ $$
\lim_n  \frac{\sum_{j \in \Lambda_{n+k+1} \setminus \Lambda_{n+k}} \|a_j\|^8_{\mathbb{H}}}{n!^{1 - \frac{\delta}{2}}((n+1)^{1-\frac{\delta}{2}}-1)}$$ 
\begin{align} \label{vypocetlimity}
\leq \frac{\|a\|_S^8}{K} \lim_n \frac{(2(n+k+1)r + 1)^d - (2(n+k)r + 1)^d)(n+k+1)!^{1 - \delta}}{n!^{1 - \frac{\delta}{2}}} = 0.
\end{align} The fact that (\ref{vypocetlimity}) implies (\ref{chcemespocitat}) is well known as Stolz - Ces\`aro Theorem (see \cite[pp.85]{Stolz}).
\end{proof}
\begin{lem} \label{stejnaspojitostreseni} Let $k \in \mathbb{N}$,  $a \in S$ and $t > 0$ be given. Let $A^{m,a}$ be approximating sequence defined with respect to exhausting boxes $\Pi_m$. For any $\epsilon > 0$ there exists $\eta > 0$ such that $\forall m \geq k$ \begin{align} \label{spojitostdukaz}
\|b - a\|_S < \eta \Longrightarrow E\|A^{m, a}_k(t) - A^{m, b}_k(t)\|^2 < \epsilon.
\end{align}
\end{lem}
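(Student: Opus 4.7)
The plan is to repeat the iterative scheme from the proof of Lemma \ref{asymptotickarovnost}, with the difference that at each level we also pick up a nonzero initial-condition contribution. All constants will be kept independent of $m \geq k$.

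\emph{Step 1 (Recursive one-step estimate).} I would first subtract the SDEs satisfied by $A^{m,a}$ and $A^{m,b}$ restricted to $\Pi_{k+j}$, apply Lemma \ref{LipschitzchovaniAn} to the drift (Lipschitz in the coordinates of $\Pi_{k+j+1}$), use BDG on the stochastic integral (noting that $\sigma_{k+j}$ depends only on $\Pi_{k+j}$ coordinates and is Lipschitz there), and close the resulting $\sigma$ term by Gronwall. This produces a constant $C>0$, independent of $m, j, k$, such that
\begin{equation*}
E \| A^{m,a}_{k+j}(t) - A^{m,b}_{k+j}(t) \|^2 \leq C \| a_{k+j} - b_{k+j} \|^2 + C t \int_0^t E \| A^{m,a}_{k+j+1}(s) - A^{m,b}_{k+j+1}(s) \|^2 ds.
\end{equation*}

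\emph{Step 2 (Iteration).} Chaining $n$ such estimates in the manner of (\ref{iterovanyodhad}) yields a bound of the form
\begin{equation*}
E \| A^{m,a}_k(t) - A^{m,b}_k(t) \|^2 \leq \sum_{j=1}^{n} \alpha_j(T)\, \| a_{k+j} - b_{k+j} \|^2 + \beta_n(T)\, R_n(t),
\end{equation*}
where the coefficients $\alpha_j(T), \beta_n(T)$ exhibit the same double-factorial decay as in Lemma \ref{asymptotickarovnost}, and by the linear-growth moment estimate (\ref{krucialniodhad}) the residual satisfies $R_n(t) \leq K_T(1 + \|a_{k+n}\|^2 + \|b_{k+n}\|^2)$, uniformly in $m \geq k+n$.

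\emph{Step 3 (Conclusion).} Fix $\eta \in (0,1]$. The bound $\|a-b\|_S \leq \eta$ forces $\|a_i-b_i\|_\mathbb{H}^8 \leq \eta^8/u(i)$, and so the elementary inequality $\|\cdot\|^2_{\mathbb{R}^3} \lesssim 1 + \|\cdot\|_\mathbb{H}^8$ applied on the finite set $\Pi_{k+n}$ gives $\|b_{k+n}\|^2 \leq M(a,k,n)$ for a constant independent of $b$ and $m$. The residual term is therefore uniformly bounded by $\beta_n(T)(1 + \|a_{k+n}\|^2 + M(a,k,n))$, and the Stolz--Ces\`aro argument at the end of the proof of Lemma \ref{asymptotickarovnost}, which uses \textbf{(H6)}, forces this to tend to $0$ as $n \to \infty$. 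Fix $n$ so large that this residual is below $\epsilon/2$. With $n$ now fixed, the data sum is a finite combination of $\|a_{k+j}-b_{k+j}\|^2$ over the bounded lattice box $\Pi_{k+n}$; combining $\|a_i-b_i\|_\mathbb{H}^8 u(i) \leq \eta^8$ with $\|a_i-b_i\|^2_{\mathbb{R}^3} \lesssim \|a_i-b_i\|_\mathbb{H}^4 + \|a_i-b_i\|_\mathbb{H}^8$ shows that each summand tends to $0$ as $\eta \downarrow 0$. Shrinking $\eta$ brings this part below $\epsilon/2$.

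The main obstacle is to keep both pieces of this split uniform in $m \geq k$: uniformity of the residual comes from (\ref{krucialniodhad}) being $m$-independent (Lemma \ref{momentoveodhady}), and uniformity of the data part from the $m$-independent Lipschitz constant produced by Lemma \ref{LipschitzchovaniAn}.
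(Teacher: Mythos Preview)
Your argument is essentially the paper's: iterate the Lipschitz estimate of Lemma \ref{LipschitzchovaniAn}, split into a finite data sum plus a residual, kill the residual with the linear-growth bound and the Stolz--Ces\`aro computation from Lemma \ref{asymptotickarovnost}, and then shrink $\eta$ to control the remaining finitely many initial-condition terms. Structurally this matches the paper.

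One point you should make explicit. In Step 2 you correctly note that the residual bound $R_n(t) \leq K_T(1+\|a_{k+n}\|^2+\|b_{k+n}\|^2)$ holds ``uniformly in $m \geq k+n$'', because iterating $n$ times requires $\Pi_m \supset \Pi_{k+n+1}$. But the statement asks for \emph{all} $m \geq k$, and after fixing $n$ in Step 3 you silently drop this restriction. For the finitely many values $m \in \{k,\dots,k+n-1\}$ your iteration does not reach level $n$, so the residual need not be small. The paper handles this at the very outset: for each such $m$ the equation for $A^{m,\cdot}$ is a fixed finite-dimensional SDE with locally Lipschitz coefficients, so continuous dependence on the initial condition $\pi_{\Pi_m}(a)$ gives the estimate directly; taking the minimum of the finitely many resulting $\eta$'s finishes the job. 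You should add this sentence.

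A cosmetic remark: the inequality you use, $\|a_i-b_i\|_{\mathbb{R}^3}^2 \lesssim \|a_i-b_i\|_\mathbb{H}^4 + \|a_i-b_i\|_\mathbb{H}^8$, is not quite the natural one (from $x^2+y^2 \leq \|\cdot\|_\mathbb{H}^2$ and $z^2 \leq \|\cdot\|_\mathbb{H}^4$ one gets $\|\cdot\|_{\mathbb{R}^3}^2 \lesssim \|\cdot\|_\mathbb{H}^2 + \|\cdot\|_\mathbb{H}^4$), but either version suffices since $\|a_i-b_i\|_\mathbb{H}^8 \leq \eta^8/u(i) \to 0$.
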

\begin{proof}
Since we know that our SDE has continuous dependence on initial condition, the Lemma is nontrivial only for infinite number of $m$ and hence we concentrate in our computations on large $m$. Again for simplification we will not write the index to the norms through computations. Similarly to the last Lemma we get for some constants $C > 0$ and $K_t > 0$ (to make last sum meaningful let us formally define $(-1)!! = 1$)
\begin{align*}
& E\|A^{m, a}_k(t) - A^{m, b}_k(t)\|^2 \leq C \|a_k - b_k\|^2 + Ct \int_0^{t} E\|A^{m, a}_{k+1}(t_1) - A^{m, b}_{k+2}(t_1)\|^2 dt_1 \\
& \leq C\|a_k - b_k\|^2 + C^2t^2 \|a_{k+1} - b_{k+1}\|^2 \\  
& + Ct\int_0^t Ct_1 \int_0^t E\|A^{m, a}_{k+2}(t_2) - A^{m, b}_{k+2}(t_2)\|^2 dt_2 dt_1 \\
& \leq C\|a_k - b_k\|^2 + C^2t^2 \|a_{k+1} - b_{k+1}\|^2 + \cdots + \frac{C^{n}t^{2n-2}}{(2n-3)!!}\|a_{k+n-1} - b_{k+n-1}\|^2 \\
& +  E \sup_{0 \leq s \leq t} \|A^{m, a}_{k+n}(s) - A^{m, b}_{k+n}(s)\|^2 \frac{(Ct^2)^n}{(2n-1)!!} \\
& \leq \sum_{j = 1}^{n} \frac{C^j t^{2j-2} \|a_{k+j-1} - b_{k+j-1}\|^2}{(2j-3)!!} + K_t \left(\sum_{i \in \Lambda_{k+n}} 3 + \|a_i\|^8_\mathbb{H} + \|b_i\|^8_\mathbb{H} \right)  \frac{(Ct^2)^n}{(2n-1)!!}.
\end{align*}
Same calculations like in Lemma \ref{asymptotickarovnost} together with Stolz - Ces\`aro Theorem gives \begin{align} \label{druhy clen do nuly}
\lim_{n \rightarrow \infty} K_t \left(\sum_{i \in \Lambda_{k+n}} 3 + \|a_i\|^8_\mathbb{H} + \|b_i\|^8_\mathbb{H} \right)  \frac{(Ct^2)^n}{(2n-1)!!} = 0.
\end{align}
Because $$\lim_{n \rightarrow \infty} \frac{C^n t^{2n-2} n^l}{((2n-1)!!)^{\frac{\delta}{2}}} = 0$$ for $l > 1$, we obtain using previously established convergence results that \begin{align} \label{druhyclenkonverguje}
\sum_{j = 1}^{\infty} \frac{C^j t^{2j-2} \|a_{k+j-1} - b_{k+j-1}\|^2}{(2j-3)!!} < +\infty.
\end{align} Therefore combining (\ref{druhy clen do nuly}) and (\ref{druhyclenkonverguje}) for given $\epsilon > 0$ we can choose $N \in \mathbb{N}$ such that \begin{align*} 
& \sum_{j = N}^{\infty} \frac{C^j t^{2j-2} \|a_{k+j-1} - b_{k+j-1}\|^2}{(2j-3)!!} \\
&  + \sup_{j \geq N} K_t \left(\sum_{i \in \Lambda_{k+j}} 3 + \|a_i\|_\mathbb{H}^8 + \|b_i\|_\mathbb{H}^8 \right)  \frac{(Ct^2)^j}{(2j-1)!!} < \frac{\epsilon}{2}. \end{align*}
For the first $N - 1$ terms we can choose $\eta > 0$ in (\ref{spojitostdukaz}) thanks to the continuous dependence on parameters for the $A^{m,a}$ in such way that \begin{align*} 
& \sum_{j = 1}^{N-1} \frac{C^j t^{2j-2} \|a_{k+j-1} - b_{k+j-1}\|^2}{(2j-3)!!} \\
&  + \sup_{j \leq N-1} E \sup_{0 \leq s \leq t} \|A^{m, a}_{k+j}(s) - A^{m, b}_{k+j}(s)\|^2 \frac{(Ct^2)^j}{(2j-1)!!}   < \frac{\epsilon}{2}, \end{align*} and the Lemma is established.
\end{proof}
\noindent The first crucial property that follows from Lemma \ref{asymptotickarovnost}) is  independence of the limit measure $P^a$ on the choice of convergent subsequence. By the well known properties of weak convergence this implies that the sequence $\lbrace \tilde{P}^a_n \rbrace$ itself weakly converges. In addition this limit doesn't depend on the choice of approximating sequence $\Lambda_n$.  
\begin{thm} \label{jednoznacnost limity} Let $\tilde{A}^{m, a}, \tilde{A}^{n,a}$ be the sequences of approximating processes on $\Omega$, $a \in S$. Then there exists probability measure $P^a$ on $\Omega$ such that $$\lim_{m \rightarrow \infty} P \circ (\tilde{A}^{m, a})^{-1} = \lim_{l \rightarrow \infty} P \circ (\tilde{A}^{l, a})^{-1} = P^a.$$ 
\end{thm}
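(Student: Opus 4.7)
The plan is to leverage the strong-convergence estimate in Lemma~\ref{asymptotickarovnost}, which says that for any two exhausting sequences the corresponding approximations become arbitrarily close (in $L^2$ over any finite box and uniformly on compact time intervals), together with the tightness established in Corollary~\ref{tesnostaproximace}. From tightness, every sequence of the form $\{\tilde P^a_n\}$ admits weak subsequential limits; the task is to show that all such limits coincide, and that this common limit does not depend on the choice of exhausting sequence.

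To verify equality of two Borel probability measures $P,Q$ on $\Omega=C([0,\infty),S)$ I would test them against cylindrical bounded continuous functionals
$$F(\omega) = g\bigl(\pi_{\Pi_k}\omega(t_1),\ldots,\pi_{\Pi_k}\omega(t_N)\bigr), \qquad g\in C_b\bigl(((\mathbb{R}^3)^{\Pi_k})^N\bigr),$$
with $0\le t_1<\cdots<t_N$ and $k\in\mathbb{N}$. Because Borel measures on the Polish space $\Omega$ are determined by their finite-dimensional distributions, and on each $S^N$ any bounded continuous function can be approximated pointwise, with a uniform bound, by such cylindrical functionals (using $\|a-(\pi_{\Pi_k}a,0)\|_S\to 0$ for every $a\in S$, which is immediate from the definition of $S$), dominated convergence shows that this class is measure-determining on $\Omega$.

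Given two subsequential limits $P$ and $Q$, possibly coming from two different exhausting sequences $\{\Lambda_{m_i}\}$ and $\{\Lambda'_{l_j}\}$, the continuity of evaluation-at-time $\omega\mapsto\omega(t)$ from $\Omega$ to $S$ and of the projection $\pi_{\Pi_k}:S\to(\mathbb{R}^3)^{\Pi_k}$ make $F$ continuous and bounded on $\Omega$. Weak convergence then yields
$$\int F\,dP = \lim_i E\,g\bigl(A^{m_i,a}_k(t_1),\ldots,A^{m_i,a}_k(t_N)\bigr),$$
and analogously for $Q$ along $l_j$. Lemma~\ref{asymptotickarovnost} applied with $T=t_N$ gives $E\sup_{t\in[0,T]}\|A^{m,a}_k(t)-A^{l,a}_k(t)\|^2\to 0$ as $m,l\to\infty$, and hence convergence in probability of the differences $A^{m_i,a}_k(t_r)-A^{l_j,a}_k(t_r)$ for each $r$. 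Combining with the uniform continuity on compacta and boundedness of $g$ and appealing to dominated convergence,
$$\bigl|E\,g(A^{m_i,a}_k(t_1),\ldots) - E\,g(A^{l_j,a}_k(t_1),\ldots)\bigr|\longrightarrow 0,$$
so $\int F\,dP=\int F\,dQ$, and therefore $P=Q$. A standard argument (a tight sequence of probability measures on a Polish space converges weakly iff all its weak accumulation points agree) then upgrades this to weak convergence of each individual sequence and gives independence from the choice of exhausting sequence.

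In my view the main technical hurdle has already been paid for in Lemma~\ref{asymptotickarovnost}, where the iterated Lipschitz estimate from Lemma~\ref{LipschitzchovaniAn} was combined with assumption \textbf{(H6)} (via Stolz--Ces\`aro) to beat the combinatorial growth of boundary corrections by the factorial decay of the weights. Once that estimate is in hand, the present theorem requires only the measure-determining reduction sketched above, and the passage from subsequential agreement to convergence of the full sequences, both of which are standard consequences of tightness on a Polish space.
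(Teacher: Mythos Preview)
Your proof is correct and follows essentially the same strategy as the paper: extract subsequential limits via the tightness of Corollary~\ref{tesnostaproximace}, then show they all coincide by testing against a measure-determining class of cylindrical functionals and invoking Lemma~\ref{asymptotickarovnost}. The only cosmetic difference is that the paper tests against path-space cylindrical \emph{Lipschitz} functionals $f(\omega)=g((\pi_{\Pi_k}\omega)_\cdot)$, so the $L^2$ estimate from Lemma~\ref{asymptotickarovnost} applies directly without your uniform-continuity-on-compacta step, and then extends by approximation to all of $C_b(\Omega)$.
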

\begin{proof}
By Corollary \ref{tesnostaproximace} we know that any two such sequences has weakly convergent subsequence. So it remains to show that the limit point is the same for any two weakly convergent subsequences (to simplify notation we call the convergent subsequences again $m$ and $l$) $\lbrace P \circ (\tilde{A}^{l, a})^{-1} \rbrace, \lbrace P \circ (\tilde{A}^{m, a})^{-1} \rbrace$. To prove this it clearly suffices to show that for any $f \in C_b(\Omega)$ \begin{align} \label{dokaze jednoznacnost}
\lim_l E f(\tilde{A}^{l, a}(\cdot)) = \lim_m E f(\tilde{A}^{m, a} (\cdot)).
\end{align}
First let $f \in C^{Cyl}_{b, Lip} (\Omega)$, i. e. there exists $k \in \mathbb{N}$ and $g \in C_{b, Lip} (\Omega_{\Pi_k})$ such that $f(\omega) = g((\pi_{\Pi_k}\omega)_\cdot)$, $\Omega_{\Pi_k} = C([0,\infty), (\mathbb{R}^3)^{\Pi_k})$ and $g$ is Lipschitz, that is there exists constant $L > 0$ s. t. $$|g((\omega_k)_\cdot) - g((\tilde{\omega}_k)_\cdot)| \leq \|\omega_k - \tilde{\omega}_k\|_{\Omega_{\Pi_k}} \ \forall \omega_k, \tilde{\omega}_k \in \Omega_{\Pi_k}.$$ Then we get for $m, l$ large enough
\begin{align*} &|E f(\tilde{A}^{l, a}(\cdot)) - E f(\tilde{A}^{m, a} (\cdot))|^2 = |E g(A^{l, a}_k(\cdot)) -  E g(A_k^{m, a} (\cdot))|^2 \\
& \leq E|g(A^{l, a}_k(\cdot)) -  g(A_k^{m, a} (\cdot))|^2 \leq E\|A^{l, a}_k(\cdot) -  A_k^{m, a} (\cdot)\|^2,
\end{align*} hence Lemma \ref{asymptotickarovnost} implies (\ref{dokaze jednoznacnost}) holds for $f \in C^{Cyl}_{b, Lip} (\Omega).$ \\
Next let $f \in C^{Cyl}_b(\Omega)$, then there eixsts bounded sequence $f_n \in C^{Cyl}_{b, Lip}(\Omega)$ such that $f_n \rightarrow f$. Finally for $f \in C_b(\Omega)$ consider cylindrical approximation by $\lbrace f_n \rbrace$, that is $f_n(\omega_\cdot) = f((\pi_{\Pi_n} \omega)_\cdot)$ and the result follows by Lebesgue Theorem. 
\end{proof}
\subsection{Markov property}
\noindent To translate Lemma \ref{stejnaspojitostreseni} into desired properties, we need to recall result about strengthening of weak convergence. Its proof follows immediately from Skorokhod representation theorem (see also \cite[pp. 168]{Rogers}). 
\begin{lem} \label{slabakonvergence}
Let $P$ be a Polish space and $\mu_n, \mu$ probability measures on $P$. Suppose $\mu_n \xrightarrow{w} \mu$. Let $f_n, f \in C(P)$ such that $f_n$ are uniformly bounded and \begin{align} \label{zesilenislabek} x_n \rightarrow x \ \text{in} \ P  \Longrightarrow f_n(x_n)) \rightarrow f(x). \end{align} Then $\mu_n f_n \rightarrow \mu f. $   
\end{lem}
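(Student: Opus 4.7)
The plan is to reduce the claim to an almost-sure convergence statement by realizing the weak convergence $\mu_n \xrightarrow{w} \mu$ on a common probability space via Skorokhod's representation theorem, then invoke dominated convergence. Since $P$ is Polish, Skorokhod's theorem provides a probability space $(\Omega, \mathcal{F}, \mathbb{P})$ carrying random variables $X_n, X$ with laws $\mu_n, \mu$ respectively such that $X_n \to X$ in $P$ almost surely.

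Next I would apply the hypothesis (\ref{zesilenislabek}) pointwise: on the (probability one) event $\{X_n \to X\}$, the assumption gives $f_n(X_n) \to f(X)$. Thus $f_n(X_n) \to f(X)$ almost surely. Since the $f_n$ are uniformly bounded, say by some constant $M$, the constant function $M$ serves as an integrable dominating envelope on the finite probability space, and the dominated convergence theorem yields
\begin{equation*}
\mu_n f_n = \mathbb{E}\, f_n(X_n) \longrightarrow \mathbb{E}\, f(X) = \mu f,
\end{equation*}
which is precisely the desired conclusion.

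There is no serious obstacle here: the whole content of the lemma is that the continuity assumption (\ref{zesilenislabek}) for varying $f_n$ is exactly what is needed to pass from a.s.\ convergence of $X_n$ to a.s.\ convergence of $f_n(X_n)$, while uniform boundedness handles integrability. The only point requiring a small care is verifying that $f$ is automatically bounded (as the pointwise limit of $f_n$ along constant sequences $x_n \equiv x$), so that $\mu f$ is well defined; this is immediate from (\ref{zesilenislabek}) applied with $x_n = x$ and the uniform bound on $f_n$.
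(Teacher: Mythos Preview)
Your proposal is correct and follows exactly the approach indicated in the paper, which simply states that the lemma ``follows immediately from Skorokhod representation theorem'' without spelling out the details. You have filled in precisely those details---Skorokhod coupling followed by dominated convergence---so there is nothing to add.
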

\noindent With this Lemma in hand we can now show that canonical process on $\Omega$ is a genuine Markov process under the measures $P^a$.
\begin{thm} \label{Markovsky proces} Let $A_t(w)$ be canonical process on $\Omega = C([0, \infty), S)$ and $P^a$ the unique limiting measure produced by Corollary \ref{tesnostaproximace}. $(A_t, P^a)$ is then a Markov process.  
\end{thm}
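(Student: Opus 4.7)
The plan is to transfer the Markov property of the finite-dimensional SDE solutions $A^{n,a}$ to the limit via Lemma \ref{slabakonvergence}. It will suffice to prove that for every $s<t$, every bounded continuous cylindrical $f:S\to\mathbb{R}$, and every bounded continuous $G:C([0,s],S)\to\mathbb{R}$,
\begin{align*}
E^a\bigl[f(A_t)\,G(A_{\cdot\wedge s})\bigr] \;=\; E^a\bigl[\phi(A_s)\,G(A_{\cdot\wedge s})\bigr], \qquad \phi(b) := E^b\bigl[f(A_{t-s})\bigr];
\end{align*}
then a monotone class / density argument upgrades this to all bounded Borel $f$, and hence yields $E^a[f(A_t)\mid\mathcal{F}_s]=\phi(A_s)$.

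Since each $A^{n,a}$ is a finite-dimensional Markov process, the analogous identity holds at level $n$ with $\phi_n(b):=E^b_n[(f\circ\chi_n)(A^{n,b}_{t-s})]$. The weak convergence $\tilde P^a_n\Rightarrow P^a$ from Theorem \ref{jednoznacnost limity} immediately dispatches the left-hand side. I would handle the right-hand side via Lemma \ref{slabakonvergence} applied to $\mu_n=\tilde P^a_n$, $\mu=P^a$ and the path functionals
\begin{align*}
F_n(\omega) := \phi_n\bigl(\pi_{\Lambda_n}\omega(s)\bigr)\,G(\omega_{\cdot\wedge s}), \qquad F(\omega) := \phi(\omega(s))\,G(\omega_{\cdot\wedge s}).
\end{align*}
Uniform boundedness of the $F_n$ is immediate. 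Since $\omega^n\to\omega$ in $\Omega$ forces $\omega^n(s)\to\omega(s)$ in $S$ and $G(\omega^n_{\cdot\wedge s})\to G(\omega_{\cdot\wedge s})$, hypothesis (\ref{zesilenislabek}) reduces to the key claim: whenever $b_n\to b$ in $S$, one has $\phi_n(\pi_{\Lambda_n}b_n)\to\phi(b)$; specialising to $b_n\equiv b$ simultaneously gives continuity of $\phi$ on $S$.

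This claim is the crux, and is established through the triangle inequality
\begin{align*}
|\phi_n(\pi_{\Lambda_n}b_n)-\phi(b)| \;\leq\; |\phi_n(\pi_{\Lambda_n}b_n)-\phi_n(\pi_{\Lambda_n}b)| \;+\; |\phi_n(\pi_{\Lambda_n}b)-\phi(b)|.
\end{align*}
After a routine density reduction to $f$ that is Lipschitz on its cylindrical base $(\mathbb{R}^3)^{\Pi_k}$, the first summand is majorised by a constant multiple of $\bigl(E\|A^{n,b_n}_k(t-s)-A^{n,b}_k(t-s)\|^2\bigr)^{1/2}$, which is controlled \emph{uniformly in} $n$ by Lemma \ref{stejnaspojitostreseni}; the second summand equals $\bigl|\tilde E^b_n f(\tilde A^{n,b}_{t-s})-E^b f(A_{t-s})\bigr|$ and tends to zero by the weak convergence $\tilde P^b_n\Rightarrow P^b$ supplied again by Theorem \ref{jednoznacnost limity}. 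A standard density chain (Lipschitz cylindrical $f$, then bounded continuous cylindrical $f$, then bounded Borel $f$ via the monotone class theorem) closes the argument.

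The main obstacle is precisely the first summand in the claim: the initial data $\pi_{\Lambda_n}b_n$ moves with $n$, so ordinary continuous dependence on initial data for a single SDE is insufficient. What saves us is the uniform-in-$n$ continuity provided by Lemma \ref{stejnaspojitostreseni}, whose proof crucially exploited the weight hypothesis \textbf{(H6)}.
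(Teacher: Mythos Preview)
Your proposal is correct and follows essentially the same route as the paper: reduce to the identity $E^a[f(A_{s+t})h]=E^a[\phi(A_s)h]$, pass the left side by weak convergence, and handle the right side via Lemma \ref{slabakonvergence}, with the key implication $b_n\to b\Rightarrow\phi_n(b_n)\to\phi(b)$ split by the triangle inequality into a uniform-in-$n$ continuity piece (Lemma \ref{stejnaspojitostreseni}) and a weak-convergence piece (Theorem \ref{jednoznacnost limity}). The paper organises things slightly differently by first isolating the continuity of $a\mapsto E^a f(A_t)$ as a separate measurability step, and it explicitly invokes the freedom---granted by Theorem \ref{jednoznacnost limity}---to work with the box approximation $\Pi_n$ so that Lemma \ref{stejnaspojitostreseni} applies as stated; you should make this last point explicit, since that lemma is formulated only for boxes.
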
 
\begin{proof}
Denote $\mathcal{S}$ the $\sigma$-algebra on $S$. We need to show these two properties \begin{align} \label{meritelnost}
& \textbf{(I)} \ a \rightarrow P^a (A_t \in C) \ \text{is measurable for any} \ C \in \mathcal{S}  \\ \label{skladacivlastnost}
& \textbf{(II)} \ P^a(A_{s+t} \in C | \mathcal{F}_s) = \phi(A_s), \ \phi(\cdot) = P^\cdot (A_t \in B), \ \forall \ C \in \mathcal{S}, 0 \leq s \leq t. \end{align}
To prove (\ref{meritelnost}) we show that $a \rightarrow E^a f(A(t))$ is continuous function for any $f \in C^{Cyl}_{b, Lip} (S)$, the measurability for general $f \in C_b(S)$ will then follow through same procedure as in Theorem \ref{jednoznacnost limity}. By the uniqueness just proved, we can consider approximation $\lbrace A^n \rbrace$ living on the boxes $\Pi_n$. So let $f(a) = g(\pi_{\Pi_k}(a))$, we then calculate \begin{align*}
& |E^a f(A_t) - E^b f(A_t)|^2 = |\lim_n E[f(\tilde{A}^{n, a}(t)) - f(\tilde{A}^{n, b}(t))]|^2 \\
& \leq \limsup_n E|g(A^{n, a}_k(t)) - g(A^{n, a}_k(t))|^2 \leq \limsup_n \|A^{n, a}_k(t) - A^{n, b}_k(t)\|^2.
\end{align*} From Lemma \ref{stejnaspojitostreseni} we derive that this estimate establishes the desired continuity. \\
To prove (\ref{skladacivlastnost}) one strives to establish  $\forall f \in C_b(S)$ \begin{align} \label{Markovskostpomocispojfci} E^a[f(A_{s+t})|\mathcal{F}_s] = E^{A_s} f(A_t). \end{align} If we denote $\varphi(\cdot) = E^\cdot[f(A_t)]$, this then means - for any $C \in \mathcal{F}_s$ $$\int_C f(A_{s+t}) dP^a = \int_C \varphi(A_s) dP^a. $$ We consider first $f \in C^{Cyl}_{b, Lip}(S)$, then we know from the first part of the proof that $\varphi(\cdot)$ is continuous. By approximation this reduces to necessity of demonstrating \begin{align} \label{Markovskostjinak}
E^a[f(A_{s+t})h(\omega_\cdot)] = E^a \varphi(A_s)h(\omega_\cdot), 
\end{align} where $h$ is arbitrary, but fixed continuous bounded $\mathcal{F}_s$ - measurable function. By weak convergence $\tilde{P}^a_n \rightarrow P^a$ the left side of (\ref{Markovskostjinak}) is a limit of (the same calculations as we made in the proof of Theorem \ref{existence reseni mart problemu} are hidden there) $$\tilde{E}^{a}_n [f(A_{s+t}) h(\omega_\cdot)] = E^a_n[(f \circ \chi_n) (B^n_{s+t}) (h \circ \psi_n)((\omega_n)_\cdot)]. $$ The finite dimensional result, i. e. the fact that $P^a_n$ solves the martingale problem on $\Omega_n$, tells us that $$E^a_n[(f \circ \chi_n) (B^n_{s+t}) (h \circ \psi_n) ((\omega_n)_\cdot)] = E^a_n[\varphi^n (\chi_n (B^n_s)) (h \circ \psi_n)((\omega_n)_\cdot)], $$ if $\varphi^n(\chi_n(B^n_s)) = E^{\chi_n(B_s^n)}_n[(f \circ \chi_n)(B_t^n)].$ We observe that $$\varphi(a) = E^a f(A_t) = \lim_n \tilde{E}^a_n f(A_t) = \lim_n E^a_n [(f \circ \chi_n) (B_t^n)], $$ hence (\ref{Markovskostjinak}) will established using Lemma \ref{slabakonvergence}, provided we can prove the implication \begin{align} \label{kzesilenislabe} a_n \rightarrow a \ \text{in} \ S \Longrightarrow \tilde{E}^{a_n}_n[f(A_t)] \rightarrow E^a[f(A_t)]. 
\end{align}
For given $\epsilon > 0$ we find $N$ from weak convergence such that $$|E^a[f(A_t)] - \tilde{E}^a_n [f(A_t)]| < \frac{\epsilon}{2} \ \forall n \geq N.$$
Like in the first part we also have the estimate $$|\tilde{E}^{a_n}_n[f(A_t)] - \tilde{E}^a_n [f(A_t)]|^2 \leq \limsup_n  \|A^{n, a_n}_k(t) - A^{n, a}_k(t)\|^2, $$ so Lemma \ref{stejnaspojitostreseni} implies we can find $\delta > 0, \tilde{N} \in \mathbb{N}$ such that $$\|a - a_n\| < \delta \ \Longrightarrow  |E^a[f(A_t)] - \tilde{E}^{a_n}_n [f(A_t)]| < \epsilon \ \forall n \geq \tilde{N}.$$ Therefore from Lemma \ref{slabakonvergence} we conclude that (\ref{Markovskostjinak}) holds for $f \in C^{Cyl}_{b, Lip}(S)$. We infer the  validity of (\ref{Markovskostpomocispojfci}) for general $f \in C_b(S)$ by routine approximation procedure. 
\end{proof}
\noindent This result gives us that if we set $P_t (a, C) = P^a \lbrace A_t \in C \rbrace$, then $P_t$ is a genuine transition probability function and $P_t f(a) = E^a f(A(t))$ is the Markov semigroup acting on all $f \in \mathcal{B}_b(S)$ (Borel bounded functions) satisfying the Chapman - Kolmogorov equality \cite[chap. I]{Blumenthal}. 
\section{Existence of invariant measure for the semigroup} \label{ergodickakapitola}
We now derive the tightness of measures $\lbrace \nu_n \rbrace$ and consequently show that any limit point is an invariant measure for the semigroup. \\
We need to enlarge our space $S$ to assure that it can accommodate invariant measure. The assumption we need in our case turns out to be \textbf{(H5)} : \begin{itemize}
\item $\exists v(i) > 0, i \in \mathbb{Z}^d, \sum_i v(i) < +\infty, \sum_i \frac{u(i)}{v(i)} < +\infty$. \end{itemize}
For the remainder of the article let us work under assumptions \textbf{(H1) - (H6)}. 
\begin{thm} \label{tesnost inv mer} The sequence of measures $\lbrace \nu_n \rbrace$ is tight.
\end{thm}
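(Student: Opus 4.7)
The plan is to extract tightness of $\{\nu_n\}$ from the Lyapunov inequality \eqref{obecnyLjapunov} established in Theorem \ref{konecnedimvysledky}, combined with the structural assumption \textbf{(H5)} relating the weights $u$ and $v$. The measure $\nu_n$ here is the (zero-extended pushforward of the) invariant measure $\mu_n$ for the finite-dimensional diffusion $A^n$, viewed as a probability measure on $S$. I will use Lemma \ref{kompaktnostvE} as the characterisation of precompact sets in $S$, so I need to produce, for every $\eta>0$, a set of the form "bounded $+$ uniform tail smallness" carrying $\nu_n$-mass at least $1-\eta$ uniformly in $n$.

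First I integrate the Lyapunov inequality $L_n W^k_n \leq -c_k W^k_n + C_k$ against the invariant measure $\mu_n$, using that $\int L_n W^k_n\,d\mu_n = 0$ (this requires a routine truncation/Dynkin argument, justified by the fact that $W^k_n$ and $L_n W^k_n$ have uniform-in-$n$ polynomial growth on $(\mathbb{R}^3)^{\Lambda_n}$ and $A^n$ is exponentially ergodic in the $W^k_n$-weighted total variation norm, so $\int W^k_n\, d\mu_n < \infty$). This yields the bound
\begin{equation*}
\int W^k_n\, d\mu_n \leq \frac{C_k}{c_k}
\end{equation*}
uniformly in $n$, and taking $k=2$ (so that $((x_i^2+y_i^2)^2+z_i^2)^2 = \|a_i\|_{\mathbb{H}}^8$) gives the coordinate-wise moment bound
\begin{equation*}
v(i)\int \|a_i\|_{\mathbb{H}}^8\, d\mu_n \leq \frac{C_2}{c_2}, \qquad i\in \Lambda_n,
\end{equation*}
where for $i\notin \Lambda_n$ the corresponding integral is trivially zero under the zero-extension.

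Summing against $u(i)$ and invoking \textbf{(H5)} in the form $\sum_i u(i)/v(i)<\infty$, I obtain the key uniform bound
\begin{equation*}
\int \|a\|_S^8\, d\nu_n \;=\; \sum_{i\in\mathbb{Z}^d} u(i)\int \|a_i\|_{\mathbb{H}}^8\, d\nu_n \;\leq\; \frac{C_2}{c_2}\sum_{i\in\mathbb{Z}^d}\frac{u(i)}{v(i)} \;<\; +\infty,
\end{equation*}
uniformly in $n$. This immediately gives boundedness in probability: Chebyshev yields $\nu_n(\{\|a\|_S > R\}) \leq C R^{-8}$. For the tail condition in Lemma \ref{kompaktnostvE}, I use the same bound coordinate-by-coordinate: for every $\delta>0$ and $N_0\in\mathbb{N}$,
\begin{equation*}
\nu_n\Bigl(\Bigl\{\textstyle\sum_{\|i\|_{\max}\geq N_0} u(i)\|a_i\|_{\mathbb{H}}^8 \geq \delta\Bigr\}\Bigr) \leq \frac{1}{\delta}\cdot \frac{C_2}{c_2}\sum_{\|i\|_{\max}\geq N_0}\frac{u(i)}{v(i)},
\end{equation*}
and the right-hand side can be made arbitrarily small, uniformly in $n$, by choosing $N_0$ large enough, again by \textbf{(H5)}.

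Given $\eta>0$, I then combine the two estimates in the standard way: pick $R$ so that $\nu_n(\{\|a\|_S>R\}) < \eta/2$ uniformly in $n$, pick for each $k\geq 1$ an integer $N_k$ so that $\nu_n(\{\sum_{\|i\|_{\max}\geq N_k} u(i)\|a_i\|_{\mathbb{H}}^8 \geq 1/k\}) < \eta\, 2^{-k-1}$ uniformly in $n$, and set
\begin{equation*}
M \;=\; \{\|a\|_S\leq R\}\cap \bigcap_{k\geq 1}\Bigl\{\textstyle\sum_{\|i\|_{\max}\geq N_k} u(i)\|a_i\|_{\mathbb{H}}^8 \leq 1/k\Bigr\}.
\end{equation*}
Then $\overline{M}$ is compact in $S$ by Lemma \ref{kompaktnostvE}, and $\nu_n(\overline{M})\geq 1-\eta$ for every $n$, which is the desired tightness. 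The only genuinely delicate step is the justification of $\int L_n W^k_n\, d\mu_n = 0$ despite the unboundedness of $W^k_n$; everything else is a bookkeeping exercise built on the Lyapunov bound and condition \textbf{(H5)}.
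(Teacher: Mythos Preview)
Your proof is correct and rests on the same three ingredients as the paper's: the Lyapunov inequality \eqref{obecnyLjapunov} for $W^2_n$, the identity $\mu_n(L_n W^2_n)=0$, and assumption \textbf{(H5)} together with Lemma \ref{kompaktnostvE}. The execution, however, differs slightly. You pass directly from $L_n W^2_n \leq -c_2 W^2_n + C_2$ and $\mu_n(L_n W^2_n)=0$ to the uniform moment bound $\int W^2_n\, d\mu_n \leq C_2/c_2$, and then extract tightness by Chebyshev and summing $u(i)/v(i)$. The paper instead splits $L_n W^2_n$ into its positive and negative parts, bounds $\mu_n(L_n W^2_n\,I_{L_n W^2_n>0})\leq C$, hence $\mu_n(L_n W^2_n\,I_{L_n W^2_n\leq 0})\geq -C$, and then argues by contradiction that a single explicit compact set $K_\epsilon=\{a:\|a_i\|_\mathbb{H}^8\leq (C+1)/(c\epsilon v(i))+C/(cv(i))\ \forall i\}$ carries mass at least $1-\epsilon$. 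Your route is the more standard one and yields the stronger statement $\sup_n\int\|a\|_S^8\,d\nu_n<\infty$ as a by-product; the paper's route avoids writing down the full moment bound but is logically equivalent. Both proofs leave the justification of $\mu_n(L_n W^2_n)=0$ at the same level of informality.
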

\begin{proof}
We want to show that for given $\epsilon > 0$ there is compact set $K_\epsilon$ in $S$ such that $\forall n \in \mathbb{N}$ one has $\nu_n(K_\epsilon) \geq 1 - \epsilon$.\\
Let us recall the estimate (\ref{obecnyLjapunov}) \begin{align} \label{pomocna_rovnost}
L_n W^2_n (a_n) \leq -c W^2_n(a_n) + C. 
\end{align}Remind that $\nu_n = \mu_n \circ i_n^{-1}$ and $\mu_n$ is an invariant measure. Hence we have the equality \begin{align} \label{rovnost_invmira} \mu_n(L_n W^2_n) = 0.\end{align} Clearly $$\mu_n(L_n W^2_n) = \mu_n(L_n W^2_n I_{L_n W^2_n > 0}) + \mu_n(L_n W^2_n I_{L_n W^2_n \leq 0}), $$ and from (\ref{pomocna_rovnost}) it follows that $\mu_n(L_n W^2_n I_{L_n W^2_n > 0}) \leq C$, so that \begin{align} \label{sporLjapunov}
\mu_n(L_n W^2_n I_{L_n W^2_n \leq 0}) \geq -C.
\end{align} Notice that in our notation it holds $$W^2_n(a_n) = \sum_{i = 1}^N V^2_i(a_i) v(i) = \sum_{i=1}^N v(i)\|a_i\|_{\mathbb{H}}^8.$$ For given $\epsilon >0$ we define the set $K_\epsilon$ as $$K_\epsilon = \left\lbrace a \in S: i \in \mathbb{Z}^d : \|a_i\|^8_{\mathbb{H}} u(i) \leq u(i)\left(\frac{C + 1}{c \epsilon v(i)} + \frac{C}{cv(i)} \right)\right\rbrace.$$ Thanks to the assumption \textbf{(H5)} this set is compact in $S$ according to Lemma \ref{kompaktnostvE}. We calculate $$\nu_n(K_\epsilon^C) = \mu_n(\psi_n(K_\epsilon^C))= \mu_n\left(b \in (\mathbb{R}^3)^{\Lambda_n} : \ \exists i \in \Lambda_n : \|b_i\|_\mathbb{H}^8 > \frac{C + 1}{c\epsilon v(i)} + \frac{C}{cv(i)}\right).$$ Hence for $a_n \in \psi_n(K_\epsilon^C)$ we have$$L_nW^2_n(a_n) \leq -c\left( \frac{C+1}{c \epsilon v(i)}+ \frac{C}{cv(i)}\right) v(i) + C \leq -\frac{C+1}{\epsilon}.$$ Therefore if $\nu_n(K_\epsilon^C) > \epsilon$ would hold, we would get the contradiction with (\ref{sporLjapunov}).
\end{proof}
\begin{thm} \label{Invariantni mira} There exists an invariant measure for the Markov process $(A, P^a)$ from Theorem \ref{Markovsky proces}.
\end{thm}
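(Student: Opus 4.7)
The plan is a standard Krylov--Bogolyubov extraction combined with the convergence tools already built in Sections 3 and 4. By Theorem \ref{tesnost inv mer} the sequence $\{\nu_n\}$ is tight on $S$, so Prohorov's theorem yields a subsequence (still denoted $\nu_{n_k}$) with $\nu_{n_k} \xrightarrow{w} \nu$ for some probability measure $\nu$ on $S$. I would aim to show that $\nu$ is invariant for the semigroup $P_t f(a) = E^a f(A_t)$, and it is enough to verify $\nu(P_t f) = \nu(f)$ for $f \in C^{Cyl}_{b, Lip}(S)$, since this class is measure-determining and $P_t f \in C_b(S)$ by the continuity part of Theorem \ref{Markovsky proces}.

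The first step is to translate finite-dimensional invariance to invariance on $S$. Writing $\tilde{P}^n_t g(a) = \tilde{E}^a_n g(A_t)$ for the semigroup of $\tilde{A}^{n,a}$, one checks directly from $\nu_n = \mu_n \circ i_n^{-1}$ and $\tilde{A}^{n,a}_t = i_n(A^{n, \pi_{\Lambda_n}(a)}_t)$ that
\begin{equation*}
\nu_n(\tilde{P}^n_t f) = \mu_n(P^n_t(f \circ i_n)) = \mu_n(f \circ i_n) = \nu_n(f),
\end{equation*}
the middle equality being invariance of $\mu_n$ under the finite-dimensional semigroup. So the sequence of measures $\nu_n$ is genuinely invariant for the approximating semigroups $\tilde{P}^n_t$.

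The main step is then the passage to the limit. Given $f \in C^{Cyl}_{b, Lip}(S)$, one writes
\begin{equation*}
\nu(P_t f) - \nu(f) = \bigl[\nu(P_t f) - \nu_{n_k}(\tilde{P}^{n_k}_t f)\bigr] + \bigl[\nu_{n_k}(\tilde{P}^{n_k}_t f) - \nu_{n_k}(f)\bigr] + \bigl[\nu_{n_k}(f) - \nu(f)\bigr].
\end{equation*}
The middle term vanishes by the invariance just observed, the last term tends to zero by weak convergence, and the first term I would handle via Lemma \ref{slabakonvergence}: the functions $g_k = \tilde{P}^{n_k}_t f$ are uniformly bounded by $\|f\|_\infty$, and the crucial hypothesis $a_k \to a$ in $S$ $\Rightarrow$ $g_k(a_k) \to P_t f(a)$ is exactly the implication (\ref{kzesilenislabe}) established in the proof of Theorem \ref{Markovsky proces}. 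Thus $\nu_{n_k}(\tilde{P}^{n_k}_t f) \to \nu(P_t f)$, which gives $\nu(P_t f) = \nu(f)$.

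The only delicate point is applying Lemma \ref{slabakonvergence} correctly: one must verify that the family $\{g_k\}$ together with the limit $P_t f$ satisfies (\ref{zesilenislabek}), which requires uniform bounds on $\tilde{P}^{n_k}_t f$ and the already-proved joint continuity of $\tilde{E}^{a_n}_n f(A_t)$ in $(n, a_n)$ along convergent initial data. All the ingredients are in place from Section 4. To upgrade from $f \in C^{Cyl}_{b, Lip}(S)$ to general test functions one uses the standard approximation of Theorem \ref{jednoznacnost limity} (cylindrical Lipschitz functions are dense in $C_b(S)$ for bounded-pointwise convergence), completing the proof.
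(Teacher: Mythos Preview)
Your proposal is correct and follows essentially the same route as the paper: both arguments restrict to $f \in C^{Cyl}_{b,Lip}(S)$, use the invariance of $\mu_n$ to get $\nu_n(\tilde P^n_t f)=\nu_n(f)$, and then invoke Lemma~\ref{slabakonvergence} via the implication~(\ref{kzesilenislabe}) from the proof of Theorem~\ref{Markovsky proces} to pass to the limit. The only cosmetic difference is that the paper writes the argument as a single chain of equalities, while you phrase it as a three-term telescoping decomposition.
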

\begin{proof}
We fix some weakly convergent sequence of measures $\lbrace \nu_n \rbrace$ and its limit point $\nu$.  
To show that $\nu$ is invariant, we prove that for any $f \in C_b(S)$  \begin{align} \label{invariance nu} 
\int_S P_t f(a) d\nu(a) = \int_S f(a) d\nu(a). \end{align}
We show (\ref{invariance nu}) holds for $f \in C^{Cyl}_{b, Lip}(S)$, the general case then follows easily by approximation as before. Recall that $\nu_n = \mu_n \circ \chi_n^{-1}$ and that $\mu_n$ is the invariant measure for process $A^n$ on $(\mathbb{R}^3)^{\Pi_n}$, so that the equality $$\int_{(\mathbb{R}^3)^{\Pi_n}} E^{i_n(e_n)}_n h(B_t^n) d\mu_n(e_n) = \int_{(\mathbb{R}^3)^{\Pi_n}} h(e_n) d\mu_n(e_n) \ \ \forall h \in C_b((\mathbb{R}^3)^{\Lambda_n})$$ holds. Remembering the calculations (\ref{komputace na prenos}) we compute 
\begin{align*}
& \int_S f(a) d\nu(a) = \lim_n \int_S f(a) d\nu_n(a) = \lim_n \int_{(\mathbb{R}^3)^{\Pi_n}} (f \circ \chi_n) (a_n) d\mu_n(a_n) \\
& = \lim_n \int_{(\mathbb{R}^3)^{\Pi_n}} E^{\chi_n(a_n)}_n (f \circ \chi_n)(B_t^n) d\mu_n(a_n) = \lim_n \int_S E^{a}_n (f \circ \chi_n)(B_t^n) d\nu_n(a) \\
& = \lim_n \int_S \tilde{E}^a_n f(A_t) d\nu_n(a) \stackrel{?}{=} \int_S E^a f(A_t) \nu(a) = \int_S P_tf(a) d\nu(a).
\end{align*} We can erase the question mark using the Lemma \ref{slabakonvergence} with exactly the same line of reasoning that was required for the proof of (\ref{skladacivlastnost}) in previous Theorem. 
\end{proof}
\section{Examples of other operators}
We list some other relevant examples, that can be handled using our strategy without any additional difficulty : \begin{itemize}
\item Of course the elliptic case lies naturally within our framework. Take Euclidean space $\mathbb{R}^3$ with standard Laplacian $\Delta$, $D = x \partial_x + y \partial_y + z \partial_z$, $X = \partial_x$ (etc. for $Y$, $Z$), $\mathcal{L}_{\lambda} = \Delta - \lambda D$  and consider operator $$L = \sum_{i \in \mathbb{Z}^d} \mathcal{L}_{\lambda_i} + q_{i, x} X_i + q_{i, y} Y_i + q_{i, z} Z_i$$ acting on $(\mathbb{R}^3)^{\mathbb{Z}^d}$. Lyapunov function here can be chosen just $x^{2k} + y^{2k} + z^{2k}$, for $k = 2$ we get the same tightness as we had in Corollary \ref{tesnostaproximace}. 
\item The Grushin plane \cite{Grusin} : Take $\mathbb{R}^2$ as the basic space and consider vector fields $X = \partial_x, Y = -x \partial_y$. $D$ is given by $D = x \partial_x + y \partial_y$ and operator $$L = \sum_{i \in \mathbb{Z}^d} X^2_i + Y^2_i - \lambda_i D_i + q_{i,x}X_i + q_{i,y} Y_i$$ on $(\mathbb{R}^2)^{\mathbb{Z}^d}$. For the Lyapunov function works $V = x^{4k} + y^{2k}$, the tightness (\ref{tesnostaproximace}) works again for $k = 2$. The $\sigma$ and $u$ in Girsanov theorem to simplify the control problem can be chosen in the following way $$\sigma = \begin{pmatrix}
\sqrt{2} & 0 \\ 0 & \sqrt{2}x
\end{pmatrix} \ \ u = \begin{pmatrix}
\frac{q_x}{\sqrt{2}} \\ \frac{q_y}{x}
\end{pmatrix}. $$Then we have $$\sigma u = b - \tilde{b} = (-\lambda x, - \lambda y).$$
\item We cannot quite handle the example of Martinet distribution as in \cite{ZegarN}. Take $\mathbb{R}^3$ and let $X = \partial_x - y^2 \partial_z$, $Y = y \partial_y$. The problem that arises lies in the nonlinear term in $z$-axis. We can not hope for our strategy to be successful, as in the last section definitely linear growth together with strong Lipschitz condition is required. But at least the finite dimensional case is almost conquered by our methods - If one puts $D = x \partial_x + y \partial_y + z \partial_z$ and consider $$L = X^2 + Y^2 - \lambda D + q_x X + q_y Y$$ as operator on $\mathbb{R}^3$, then the SDE corresponding to this operator has coefficients $$b = (q_x - \lambda x, q_y - \lambda y, -\lambda z - q_x y^2), \ \sigma = \begin{pmatrix} \sqrt{2} & 0 & 0 \\ 0 & \sqrt{2} & 0 \\ 0 & 0 & \sqrt{2}y^2
\end{pmatrix}.$$
Due to nonlinearities, not even global existence of process is a priori clear. However, if we set $V_k = x^{2k} + y^{6k} + z^{2k}$, we calculate that $V_k$ is the Lyapunov function giving global existence and invariant measure. The smoothness of density holds from Theorem \ref{HormavPrav} as well. However to our best knowledge, we are unable to investigate the irreducibility of the process.  
\end{itemize} 
In general we can say, that our strategy is successful whenever we can establish finite dimensional results as in (\ref{konecnedimvysledky}) with Lyapunov function, that will enable us to construct the diffusion using tightness arguments as in chapter three. To finish the strategy with desired results, it is then essential that we can impose on the interaction such constraints that leads to the conditions of type (\ref{LipschitzchovaniAn}). \\

\begin{small}
\noindent \textbf{Acknowledgement.} The author feels the necessity of expressing sincerest gratitude towards dr Jan Seidler, who provided him with necessary feedback throughout the whole work and was willing to sacrifice lot of his time to the discussions about the arising issues. A lot is owed to Professor Boguslaw Zegarlinski for his generous support and fruitful proposal to study the topic. Special thanks goes to diligent anonymous referee, who contributed by many helpful suggestions and did his best to increase the comprehensibility of work, and to dr Jan Swart and dr Laurent Miclo, who pointed out the error in the previous version.
\end{small}
\begin{footnotesize}

\end{footnotesize}

\begin{thebibliography}{99}
\bibitem{Grusin} N. Arcozzi, A. Baldi, From Grushin to Heisenberg via an isoperimetric problem. J. Math. Anal. Appl. 340 (2008), no. 1, 165 – 174
\bibitem{Bass1} R. Athreya et al., \emph{Infinite dimensional stochastic differential equations of Ornstein - Uhlenbeck type},  Stochastic Process. Appl. 116 (2006), no. 3, 381 - 406
\bibitem{Bakry} D. Bakry et al., \emph{On gradient bounds for the heat kernel on the Heisenberg group}, Journal of Functional Analysis 255 (2008) 1905 - 1938
\bibitem{Bass2} R. Bass, E. Perkins, \emph{On uniqueness in law for parabolic SPDEs and infinite-dimensional SDEs}, Electron. J. Probab. 17 (2012), no. 36, 54 pp.
\bibitem{BassSP} R. Bass, \emph{Stochastic Processes}, Cambridge University Press, 2011
\bibitem{Blumenthal} R. Blumenthal, R. Getoor, \emph{Markov processes and potential theory}, Pure and Applied Mathematics, Vol. 29 Academic Press, New York-London 1968
\bibitem{Bellet} L. Rey-Bellet, \emph{Ergodic properties of Markov processes}, Open quantum systems. II, 1 – 39, Lecture Notes in Math., 1881, Springer, Berlin, 2006
\bibitem{Grupy} A. Bonfiglioli, E. Lanconelli, F. Uguzonni, \emph{Stratified Lie Groups and Potential Theory for Their Sub-Laplacians}, Springer-Verlag Berlin Heidelberg, 2007
\bibitem{Zabczyk} G. Da Prato, J. Zabczyk, \emph{Convergence to equilibrium for classical and quantum spin systems},  Probab. Theory Related Fields 103 (1995), no. 4, 529 – 552
\bibitem{ZegarN} F. Dragoni, V. Kontis, B. Zegarli\'{n}ski, \emph{Ergodicity of Markov Semigroups with H\"{o}rmander Type Generators in Infinite Dimensions}, Potential Analysis, October 2012, Volume 37, Issue 3, 199 - 227
\bibitem{Fritz} J. Fritz, \emph{Infinite lattice systems of interacting diffusion processes, existence and regularity properties}. Z. Wahrsch. Verw. Gebiete 59 (1982), no. 3, 291 - 309
\bibitem{Gionet} A. Guionnet, B. Zegarli\'{n}ski, \emph{Lectures on logarithmic Sobolev inequalities}, Séminaire de Probabilités, XXXVI, 1 – 134,
Lecture Notes in Math., 1801, Springer, Berlin, 2003
\bibitem{Madari} I. Gy\"{o}ngy, T. Pr\"{o}hle, \emph{On the approximation of stochastic differential equation and on Stroock-Varadhan's support theorem},  Comput. Math. Appl. 19 (1990), no. 1, 65 – 70
\bibitem{Hairer} M. Hairer, J. C. Mattingly, \emph{Yet another look at Harris' ergodic theorem for Markov chains}, Seminar on Stochastic Analysis, Random Fields and Applications VI Progress in Probability Volume 63, 2011, 109 - 117 
\bibitem{Seidler} M. Hofmanov\' a, J. Seidler, \emph{On weak solutions of stochastic differential equations},  Stoch. Anal. Appl. 30 (2012), no. 1, 100 - 121
\bibitem{Stroockinterakce} R. Holley, D. Stroock \emph{Diffusions on an infinite dimensional torus}. J. Funct. Anal. 42 (1981), no. 1, 29 - 63
\bibitem{Japonci} K. Ichihara, H. Kunita, \emph{A classification of the second order degenerate elliptic operators and its probabilistic characterization},  Z. Wahrscheinlichkeitstheorie und Verw. Gebiete  30  (1974), 235 - 254
\bibitem{Kipnis} C. Kipnis, C. Landim, \emph{Scalling limits of interacting particle systems}, Springer - Verlag, Berlin, 1999
\bibitem{Ottobre} V. Kontis, M. Ottobre, B. Zegarli\'{n}ski, \emph{Markov semigroups with hypocoercive - type generator in infinite dimensions I: Ergodicity and smoothing}, preprint 2013, arXiv:1306.6452 [math-ph]
\bibitem{Ottobrex} V. Kontis, M. Ottobre, B. Zegarli\'{n}ski, \emph{Markov semigroups with hypocoercive - type generator in infinite dimensions II: Applications}, preprint 2013, arXiv:1306.6453 [math-ph]
\bibitem{Liget} T. Liggett, \emph{Interacting particle systems}, Springer - Verlag, New York, 1985
\bibitem{Matingly} J. Mattingly, A. Stuart, D. Higham, \emph{Ergodicity for SDEs and Aproximations : Locally Lipschitz vector fields and degenerate noise},  Stochastic Process. Appl. 101 (2002), no. 2, 185 - 232
\bibitem{Mejn1} S. Meyn, R. L. Tweedie, \emph{Stability of Markovian processes. II. Continuous-time processes and sampled chains},  Adv. in Appl. Probab. 25 (1993), no. 3, 487 - 517
\bibitem{Mejn2} S. Meyn, R. L. Tweedie, \emph{Stability of Markovian processes. III. Foster-Lyapunov criteria for continuous-time processes},  Adv. in Appl. Probab. 25 (1993), no. 3, 518 - 548
\bibitem{Munkres} J. Munkres, \emph{Topology}, Prentice Hall, Inc., 2nd edition, 2000
\bibitem{Stolz} M. Mure\c {s}an, A concrete approach to classical analysis, Springer, New York, 2009. xviii+433 pp.
\bibitem{Nualart} D. Nualart, \emph{The Malliavin Calculus and Related Topics}, Springer - Verlag Berlin Heidelberg, 2nd edition, 2006
\bibitem{Oksendal} B. \O ksendal, \emph{Stochastic differential equations. An introduction with applications}, Sixth edition, Springer - Verlag, Berlin, 2003
\bibitem{Rogers} L. Rogers, D. Williams, \emph{Diffusions, Markov Processes, and Martingales}, John Wiley \& Sons, 1987
\bibitem{Stroock} D. Stroock, S. Varadhan, \emph{On the support of diffusion processes with applications to the strong maximum principle}, Proceedings of the Sixth Berkeley Symposium on Mathematical Statistics and Probability 1972, Vol. III: Probability theory, 333 - 359
\bibitem{Zamboti} L. Zambotti, \emph{An analytic approach to existence and uniqueness for martingale problems in infinite dimensions},  Probab. Theory Related Fields 118 (2000), no. 2, 147 - 168
\bibitem{ZegarS} B. Zegarli\'{n}ski, \emph{The strong decay to equilibrium for the stochastic dynamics of unbounded spin systems on a lattice}, 
Communications in Mathematical Physics, January (III) 1996, Volume 175, Issue 2, 401 - 432 
\end{thebibliography}
\end{document}